\newtheorem{remark}{Remark}
\newtheorem{condition}{Condition}
\newtheorem{theorem}{Theorem}
\newtheorem{corollary}[theorem]{Corollary}
\newtheorem{definition}{Definition}
\newtheorem{example}[theorem]{Example}
\newtheorem{lemma}[theorem]{Lemma}
\newtheorem{proposition}[theorem]{Proposition}
\newenvironment{proof}[1][Proof]{\textbf{#1.} }{\ \rule{0.5em}{0.5em}}
\begin{document}

\begin{frontmatter}
\title{Needlet-Whittle Estimates on the Unit Sphere}%\protect\thanksref{T1}
\runtitle{Needlet-Whittle Estimates on the Unit Sphere}
%\thankstext{T1}{%titto}

\begin{aug}
  \author{Claudio Durastanti\ead[label=e1]{%
durastan@mat.uniroma2.it}},
\address{University of Tor Vergata, Roma, and
University of Pavia,\\
           \printead{e1}}
  \author{Xiaohong Lan%
\ead[label=e2]{xhlan@ustc.edu.cn}}
\address{ University of Science and
Technology of China (Hefei, Anhui, China 230026) and University of
Connecticut,\\
           \printead{e2}}
  \author{Domenico Marinucci%
\thanksref{t2}\ead[label=e3]{marinucc@mat.uniroma2.it}}
  \address{%
University of Tor Vergata, Roma,\\
          \printead{e3}}

\thankstext{t2}{Corresponding author.}

  \runauthor{Durastanti et al.}

\end{aug}

\begin{abstract}
We study the asymptotic behaviour of
needlets-based approximate maximum
likelihood estimators for the spectral
parameters of Gaussian and isotropic
spherical random fields. We prove
consistency and asymptotic Gaussianity, in
the high-frequency limit, thus
generalizing earlier results by Durastanti et
al. (2011) based upon
standard Fourier analysis on the sphere. The
asymptotic results are then
illustrated by an extensive Monte Carlo study.
\end{abstract}

\begin{%
keyword}[class=AMS]
\kwd{62M15}
\kwd{62M30}
\kwd{60G60}
\kwd{42C40}
\end{keyword}

\begin{keyword}
\kwd{Spherical random fields}
\kwd{high
frequency asymptotics}
\kwd{Whittle likelihood}
\kwd{needlets}
\kwd{%
parametric and semiparametric estimates}
\end{keyword}

\tableofcontents

\end{frontmatter}

%\maketitle

\section{Introduction\label{Sec: Introduction}}

In a recent paper, (see \cite{dlm}, \cite{durastanti}) we investigated the
asymptotic behaviour of a Whittle-like approximate maximum likelihood
procedure for the estimation of the spectral parameters (e.g., the \emph{%
spectral index}) of isotropic Gaussian random fields defined on the unit
sphere $\mathbb{S}^{2}.$ Under Gaussianity, it was indeed possible to
establish consistency and a central limit theorem allowing for feasible
inference, under broad conditions on the behaviour of the angular power
spectrum. These results, as many others concerning statistical inference on
spherical random fields, have recently found strong motivations arising from
applications, especially in a Cosmological framework (see for instance \cite%
{marpecbook} and the references therein). From the technical point of view,
the asymptotic framework is rather different from usual, as it is based on
observations collected at higher and higher frequencies on a fixed-domain
(the unit sphere): even consistency of the angular power-spectrum estimator
under such circumstances becomes non-trivial, and largely open for research,
see also \cite{marpec2}.

The procedure considered in \cite{dlm} is based upon spherical harmonics and
classical Fourier analysis on the sphere. Despite its appealing properties,
it must be stressed that in many practical circumstances spherical harmonics
may suffer serious drawbacks, due to their lack of localization in real
space. This can make their implementation infeasible, due to the presence of
unobserved regions on the sphere (as is commonly the case for Cosmological
applications), and it may exclude the possibility of separate estimation on
different hemispheres, as considered for instance by \cite{bkmpBer}, \cite%
{pietrobon2}. In view of these issues, it is natural to investigate the
possibility to extend Whittle-like procedures to a spherical wavelet
framework, so as to exploit the double-localization properties (in real and
harmonic space) of such constructions. This is the purpose of this paper,
where we shall focus in particular on spherical needlets.

Spherical needlets are a form of second-generation wavelets on the sphere,
introduced in 2006 by \cite{npw1} and \cite{npw2}, and very extensively
exploited both in the statistical literature and for astrophysical
applications in the last few years. Stochastic properties of needlets when
used to estimate spherical random fields are developed in \cite{bkmpAoS},
\cite{bkmpBer}, \cite{bkmpAoSb} \cite{ejslan}, \cite{spalan} and \cite%
{mayeli}. Needlets have been generalized either to an unbounded support in
the frequency domain (Mexican needlets) by \cite{gm1}, \cite{gm2} and \cite%
{gm3}, and to the case of spin fiber bundles (spin needlets, see \cite%
{gelmar}, and mixed needlets \cite{gelmar2010}), again in view of
Cosmological applications such as weak gravitational lensing and the
so-called polarization of the Cosmic Microwave Background (CMB) radiation.
Concerning the latter, applications to CMB temperature and polarization data
are presented for instance by \cite{bkmpAoS}, \cite{cama}, \cite{dmg}, \cite%
{fay08}, \cite{glm}, \cite{ghmkp}, \cite{mpbb08}, \cite{pietrobon1}, \cite%
{pietrobon2}, \cite{rudjord2}. Indeed, as described for instance in \cite%
{dode2004} and \cite{cama}, satellite missions such \emph{WMAP } and \emph{%
Planck} (see http://map.gsfc.nasa.gov/) are providing huge datasets on CMB,
usually assumed to be a realization of an isotropic, Gaussian spherical
random field. Parameter estimation has been considered by many applied
papers (see \cite{hamann} for a review), but in our knowledge until now no
rigorous asymptotic result has so far been produced on these procedures. We
refer also to \cite{bkmpAoS}, \cite{fay08}, \cite{glm}, \cite{pbm06}, \cite%
{pietrobon1}, \cite{marpec2} for further theoretical and applied results on
angular power spectrum estimation, in a purely nonparametric setting, and to
\cite{kerkyphampic}, \cite{kim}, \cite{kimkoo}, \cite{kookim}, \cite{jin},
\cite{leonenko1}, \cite{leonenko2}, \cite{ejslan} and \cite{marpecbook} for
further results on statistical inference for spherical random fields or
wavelets applied to CMB.

As mentioned earlier, the asymptotic framework we are considering here is
rather different from usual: we assume we are observing a single realization
of an isotropic field, the asymptotics being implemented with respect to the
higher and higher resolution level data becoming available. In view of this,
our paper is to some extent related to the growing area of fixed-domain
asymptotics (see for instance \cite{anderes}, \cite{guo}, \cite{loh}, \cite%
{stein}); on the other hand, as for \cite{dlm} some of the techniques
exploited here are close to those adopted by \cite{Robinson}, where
semiparametric estimates of the long memory parameter for covariance
stationary processes are analyzed. In terms of angular power spectrum
behaviour, we shall also allow for semiparametric models where only the
high-frequency/small-scale behaviour of the random field is constrained. In
particular, we consider both full-band and narrow-band estimates, the latter
entailing a slower rate of convergence but allowing for unbiased estimation
under more general circumstances.

The plan of the paper is as follows: in Section \ref{subsec:
gaussrandomfiels}, we will recall briefly some well-known background
material on needlet analysis for spherical isotropic random fields; in
Section \ref{sec: needwhittle} we will introduce and motivate the
Whittle-like minimum contrast estimators. In Section \ref{asproperties} we
shall establish the asymptotic properties of these estimators, in particular
weak consistency and Gaussianity, while in Section \ref{narrow} we present
results on narrow band estimates. Some Monte Carlo evidence on performance
and comparisons with the procedures in \cite{dlm} are collected in Section %
\ref{numerical}, while some auxiliary technical results are collected in the
Appendix.

\section{Spherical Random Fields and Angular Power Spectrum \label{subsec:
gaussrandomfiels}}

It is a well-known fact in Fourier analysis that the set of spherical
harmonics $\left\{ Y_{lm}:l\geq 0,m=-l,...,l\right\} $ represents an
orthonormal basis for the space $L^{2}\left( \mathbb{S}^{2}\right) $, the
class of square-integrable functions on the unit sphere (see for instance
\cite{adler}, \cite{steinweiss}, \cite{leonenko1}, \cite{marpecbook}, for
more details, and \cite{leosa}, \cite{mal} for extensions). Spherical
harmonics are defined as the eigenfunctions of the spherical Laplacian $%
\Delta _{S^{2}},$ e.g. $\Delta _{S^{2}}Y_{lm}=-l(l+1)Y_{lm}$, see again \cite%
{steinweiss}, \cite{VMK} and \cite{marpecbook} for more discussion and
analytic expressions. The spherical needlets (\cite{npw1}, \cite{npw2}) are
defined as
\begin{equation}
\psi _{jk}\left( x\right) =\sqrt{\lambda _{jk}}\sum_{l}b\left( \frac{l}{B^{j}%
}\right) \sum_{m=-l}^{l}\overline{Y}_{lm}\left( x\right) Y_{lm}\left( \xi
_{jk}\right) \text{ ,}  \label{need-def}
\end{equation}%
where $\left\{ \xi _{jk}\right\} $ is a set of cubature points on the
sphere, indexed by $j$, the resolution level index, and $k$, the cardinality
of the point over the fixed resolution level, while $\lambda _{jk}>0$ is the
weight associated to any $\xi _{jk}$ (see also e.g. \cite{bkmpAoSb}) and
\cite{marpecbook}). Let $N_{j}$ denote the number of cubature points for a
given level $j$; as discussed by (\cite{npw1}, \cite{npw2}), cubature points
and weights can be chosen to satisfy
\begin{equation}
\lambda _{jk}\approx B^{-2j}\text{ },\text{ }N_{j}\approx B^{2j}\text{ ,}
\label{Njdef}
\end{equation}%
where by $a\approx b$, we mean that there exists $c_{1},c_{2}>0$ such that $%
c_{1}a\leq b\leq c_{2}a$. In the sequel, for notational simplicity we shall
assume that there exists a positive constant $c_{B}$ such that $%
N_{j}=c_{B}B^{2j},$ for all scales $j$. In practice, cubature points and
weights can be identified with those evaluated by common packages such as
HealPix (see for instance \cite{bkmpAoS}, \cite{GLESP}, \cite{HEALPIX}).

Viewing $L_{l}(\left\langle x,y\right\rangle )=\sum_{m=-l}^{l}\overline{Y}%
_{lm}\left( x\right) Y_{lm}\left( y\right) $ as a projection operator, (\ref%
{need-def}) can be considered a weighted convolution with a weight function $%
b\left( \cdot \right) ,$ chosen so that the following properties holds (see
\cite{npw1}, \cite{npw2}): for fixed $B>1$, $b\left( \cdot \right) $ has
compact support in $\left[ B^{-1},B\right] $ and therefore $b\left( \frac{l}{%
B^{j}}\right) $ has compact support in $l\in \left[ B^{j-1},B^{j+1}\right] $%
; this implies that needlets have compact support in the harmonic domain.
Moreover, $b\left( \cdot \right) \in C^{\infty }\left( 0,\infty \right) $,
which is pivotal to prove the following quasi-exponential localization
property (see \cite{npw1}): for any $M=1,2,...$ there exists $c_{M}>0$ such
that for any $x\in \mathbb{S}^{2}$,
\begin{equation*}
\left\vert \psi _{jk}\right\vert \leq \frac{c_{M}B^{j}}{\left(
1+B^{j}\arccos \left( \left\langle x,\xi _{jk}\right\rangle \right) \right)
^{M}}\text{ .}
\end{equation*}%
Finally, we have the so-called \emph{partition of unity} property: for $l>B$%
\begin{equation*}
\sum_{j\geq 0}b^{2}\left( \frac{l}{B^{j}}\right) =1\text{ ,}
\end{equation*}%
which allows the establishment of the following reconstruction formula (see
again \cite{npw1}): for $f\in L^{2}\left( \mathbb{S}^{2}\right) $, we have,
in the $L^{2}$ sense:
\begin{equation*}
f(x)=\sum_{j,k}\beta _{jk}\psi _{jk}(x)\text{ ,}
\end{equation*}%
where%
\begin{equation}
\beta _{jk}=\left\langle f,\psi _{jk}\right\rangle _{L_{2}\left( \mathbb{S}%
^{2}\right) }=\int_{\mathbb{S}^{2}}\overline{\psi }_{jk}\left( x\right)
f\left( x\right) dx\text{ .}  \label{needcoeffic}
\end{equation}

Consider now a zero-mean, isotropic Gaussian random field $T:\mathbb{S}%
^{2}\times \Omega \rightarrow \mathbb{R}$; we recall also that for every $%
g\in SO\left( 3\right) $ and $x\in \mathbb{S}^{2}$, a field $T\left( \cdot
\right) $ is isotropic if and only if%
\begin{equation*}
T\left( x\right) \overset{d}{=}T\left( gx\right) \text{ ,}
\end{equation*}%
where the equality holds in the sense of processes. It is again a standard
fact (see e.g. \cite{leonenko1}, \cite{marpecbook}) that the following
spectral representation holds:
\begin{eqnarray}
T\left( x\right) &=&\sum_{l\geq 0}\sum_{m=-l}^{l}a_{lm}Y_{lm}\left( x\right)
\text{ ,}  \label{specrap1} \\
a_{lm} &=&\int_{\mathbb{S}^{2}}T\left( x\right) \overline{Y}_{lm}\left(
x\right) dx\text{ .}  \notag
\end{eqnarray}%
Note that this equality holds in both the $L^{2}\left( \mathbb{S}^{2}\times
\Omega ,dx\otimes \mathbb{P}\right) $ and $L^{2}\left( \mathbb{P}\right) $
senses for every fixed $x\in \mathbb{S}^{2}$. For an isotropic Gaussian
field, the spherical harmonics coefficients $a_{lm}$ are Gaussian complex
random variables such that%
\begin{equation*}
\mathbb{E}\left( a_{lm}\right) =0\text{ , }\mathbb{E}\left( a_{lm}\overline{a%
}_{l_{1}m_{1}}\right) =\delta _{l}^{l_{1}}\delta _{m}^{m_{1}}C_{l}\text{ ,}
\end{equation*}%
where the angular power spectrum $\left\{ C_{l}\text{ , }l=1,2,3,...\right\}
$ fully characterizes the dependence structure under Gaussianity.
Characterizations of the spherical harmonics coefficients under Gaussianity
and isotropy are discussed for instance by \cite{bm}, \cite{marpecbook};
here we simply recall that:
\begin{equation*}
\sum_{m=-l}^{l}\left\vert a_{lm}\right\vert ^{2}\sim C_{l}\times \chi
_{2l+1}^{2}\text{ .}
\end{equation*}%
Hence, given a realization of the random field, an estimator of the angular
power spectrum can be defined as:%
\begin{equation*}
\widehat{C}_{l}=\frac{1}{2l+1}\sum_{m=-l}^{l}\left\vert a_{lm}\right\vert
^{2}\text{ ,}
\end{equation*}%
the so-called empirical angular power spectrum. It is immediately observed
that%
\begin{equation}
\mathbb{E}\left( \widehat{C}_{l}\right) =\frac{1}{2l+1}%
\sum_{m=-l}^{l}C_{l}=C_{l}\text{ , }Var\left( \frac{\widehat{C}_{l}}{C_{l}}%
\right) =\frac{2}{2l+1}\rightarrow 0\text{ for }l\rightarrow +\infty \text{ .%
}  \label{powest2}
\end{equation}

Now recall that the needlet coefficients can be written as%
\begin{equation}
\beta _{jk}=\sqrt{\lambda _{jk}}\sum_{B^{j-1}<l<B^{j+1}}b\left( \frac{l}{%
B^{j}}\right) \sum_{m=-l}^{l}a_{lm}Y_{lm}\left( \xi _{jk}\right) \text{ ,}
\label{needcoeff}
\end{equation}%
where%
\begin{equation*}
\mathbb{E}\left( \beta _{jk}\right) =\sqrt{\lambda _{jk}}%
\sum_{B^{j-1}<l<B^{j+1}}b\left( \frac{l}{B^{j}}\right)
\sum_{m=-l}^{l}Y_{lm}\left( \xi _{jk}\right) \mathbb{E}\left( a_{lm}\right)
=0\text{ .}
\end{equation*}

\begin{remark}
It should be noted that in this paper we consider as observations
directly the needlet coefficients, rather than their measurements
on actual data. While this is clearly a simplifying assumption, we
believe it can be heuristically justified, at least as a first
order approximation, as follows.

The data collection procedure for astrophysical experiments can be
described as consisting of continuous averages around pointing
directions, obtained as

\begin{equation*}
H\left( y\right) =\int_{\mathbb{S}^{2}}T\left( x\right) K\left(
\left\langle x,y\right\rangle \right) dx\,\ \text{.}
\end{equation*}%
Heuristically, the kernel $K(.,.)$ represents the spatial effect
of the measuring antenna; in the astrophysical literature, it is
usually labelled a beam function, which we take to be radially
symmetric, so that it
can be expanded in terms of Legendre polynomials as%
\begin{equation*}
K\left( u\right) =\sum_{l}h_{l}P_{l}\left( u\right) \text{ , }u\in
\left[ -1,1\right] \text{ .}
\end{equation*}%
Standard computations then show that the observed spherical
harmonic coefficients are related to the intrinsic ones by the
simple modulation factor $a^{obs}_{l,m}=h_{l}a_{l,m}$. For our
purposes, the factor $h_{l}$ can be incorporated in the asymptotic
behaviour of the angular power spectrum $C_{l}$, for which we
allow some flexibility, see Conditions
\ref{REGULNEED0}-\ref{REGULNEED3} below.

A more realistic experimental set-up would allow also for the
presence of masked or unobserved regions, e.g. we can consider the
observed field
\begin{equation*}
\widetilde{T}\left( x\right) :=T\left( x\right) M\left( x\right) \text{ ,}
\end{equation*}%
where $M\left( x\right) $ is the mask function, e.g. the indicator
function of the set where observations are actually collected.
However, this more general setting does not really pose
extra-difficulties; indeed, defining
\begin{equation*}
\widetilde{\beta }_{jk}:=\int_{\mathbb{S}^{2}}\overline{\psi }_{jk}\left(
x\right) \widetilde{T}\left( x\right) dx\text{ ;}
\end{equation*}%
it was proven in \cite{bkmpBer} that
\begin{equation*}
\frac{\widetilde{\beta }_{jk}-\beta _{jk}}{\sqrt{Var\left( \beta
_{jk}\right) }}=o_{p}\left(1\right) \text{ ,}
\end{equation*}
for all coefficients outside an arbitrarily small neighbourhood
around the masked regions. Heuristically, this result is stating
that for coefficients centred outside the masked regions, the
presence of missing observations is asymptotically negligible.
This is indeed a major advantage of the needlet analysis, when
compared to standard spherical harmonics transforms.

\end{remark}

As in \cite{bkmpAoS}, we introduce the following regularity condition on the
angular power spectrum:

\begin{condition}[Regularity]
\label{REGULNEED0}The random field $T(x)$ is Gaussian and isotropic with
angular power spectrum such that:%
\begin{equation}
C_{l}=l^{-\alpha _{0}}G(l)>0\text{,}  \label{Cl-reg}
\end{equation}%
where $c_{0}^{-1}\leq G(l)\leq c_{0}$, $\alpha _{0}>2$, for all $l\in
\mathbb{N}$ ,and for every $r\in \mathbb{N}$ there exist $c_{r}>0$ such that:%
\begin{equation*}
\left\vert \frac{d^{r}}{du^{r}}G\left( u\right) \right\vert \leq c_{r}u^{-r}%
\text{,}
\end{equation*}%
for $u\in (0,\infty )$ .
\end{condition}

Condition \ref{REGULNEED0} requires some form of regular variation on the
tail behaviour of the angular power spectrum $C_{l}$. For instance, in the
CMB framework the so-called \emph{Sachs-Wolfe} power spectrum (i.e. the
leading model for fluctuations of the primordial gravitational potential)
takes the form (\ref{Cl-reg}), the spectral index $\alpha _{0}$ capturing
the scale invariance properties of the field itself ($\alpha _{0}$ is
expected to be close to $2$ from theoretical considerations, a prediction so
far in good agreement with observations, see for instance \cite{dode2004}
and \cite{Larson}). In particular, this Condition will be necessary to prove
needlet coefficients (\ref{needcoeffic}) to be asymptotically uncorrelated\
(see \cite{bkmpAoS}). For asymptotic results below, we shall need to
strengthen Condition \ref{REGULNEED0} as in \cite{dlm}, imposing in
particular

\begin{condition}
\label{REGULNEED} Condition \ref{REGULNEED0} holds, and moreover
\begin{equation*}
G\left( l\right) =G_{0}\left( 1+O\left( l^{-1}\right) \right) \text{ ,}
\end{equation*}%
whence
\begin{equation*}
\mathbb{E}\left( \widehat{C}_{l}\right) =G_{0}l^{-\alpha _{0}}\left(
1+O\left( l^{-1}\right) \right) \text{ .}
\end{equation*}
\end{condition}

As we shall show, Condition \ref{REGULNEED} is sufficient to establish
consistency for the estimator we are going to define. We shall also consider
two further assumptions, \ref{REGULNEED2} (which implies \ref{REGULNEED}),
to derive asymptotic Gaussianity, and \ref{REGULNEED3} (which implies \ref%
{REGULNEED2}) to provide a centered limiting distribution, see also \cite%
{dlm} for related assumptions.

\begin{condition}
\label{REGULNEED2} Condition \ref{REGULNEED0} holds and moreover%
\begin{equation*}
G\left( l\right) =G_{0}\left( 1+\kappa l^{-1}+O\left( l^{-2}\right) \right)
\text{ .}
\end{equation*}
\end{condition}

\begin{condition}
\label{REGULNEED3} Condition \ref{REGULNEED0} holds and moreover
\begin{equation*}
G\left( l\right) =G_{0}\left( 1+o\left( l^{-1}\right) \right) \text{ .}
\end{equation*}
\end{condition}

\begin{example}
Condition \ref{REGULNEED0} is satisfied for instance by%
\begin{equation*}
G\left( l\right) =\left\{ \log l\right\} ^{\delta }\frac{P\left( l\right) }{%
Q\left( l\right) }\text{ ,}
\end{equation*}%
where%
\begin{eqnarray*}
P\left( l\right) &=&p_{0}+p_{1}l+p_{2}l^{2}+p_{m}l^{m}\text{ ;} \\
Q\left( l\right) &=&q_{0}+q_{1}l+q_{2}l^{2}+q_{m}l^{m}\text{ }
\end{eqnarray*}%
are two finite order polynomials such that $P\left( l\right) ,Q\left(
l\right) >0$. Condition \ref{REGULNEED2} is then fulfilled for $\delta =0$, $%
\kappa =p_{m-1}/p_{m}-q_{m-1}/q_{m}$, while Condition \ref{REGULNEED3} holds
for $\kappa =0$ (see also \cite{bkmpAoS},\cite{dlm},\cite{mayeli}).
\end{example}

Under Condition \ref{REGULNEED0} we have:%
\begin{equation}
c_{0}B^{\left( 2-\alpha \right) j}\leq \sum_{l}b^{2}\left( \frac{l}{B^{j}}%
\right) C_{l}\frac{2l+1}{4\pi }L_{l}\left( \left\langle x,y\right\rangle
\right) \leq c_{1}B^{\left( 2-\alpha \right) j}.  \label{boundden}
\end{equation}

Indeed%
\begin{equation*}
\frac{1}{B^{\left( 2-\alpha \right) j}}\sum_{l}b^{2}\left( \frac{l}{B^{j}}%
\right) C_{l}\frac{2l+1}{4\pi }L_{l}\left( \left\langle x,y\right\rangle
\right)
\end{equation*}%
\begin{equation*}
=c\int_{\mathbb{S}^{2}}b^{2}\left( x\right) g\left( x\right) x^{1-\alpha
}dx+o_{j}(1)\approx B^{\left( 2-\alpha \right) j}\text{;}
\end{equation*}%
more details can be found the Appendix, Proposition \ref{propKj}.

As mentioned before, in \cite{bkmpAoS} it is proven, in view of (\ref%
{boundden}), that:

\begin{lemma}
\label{Lemmacor}Under Condition \ref{REGULNEED0}, there exists $M>0$ such
that:
\begin{equation*}
\left\vert Cor\left( \beta _{jk},\beta _{jk^{\prime }}\right) \right\vert
\leq \frac{C_{M}}{\left( 1+B^{j}d\left( \left\langle \xi _{jk},\xi
_{jk^{\prime }}\right\rangle \right) \right) ^{M}}\text{ .}
\end{equation*}
\end{lemma}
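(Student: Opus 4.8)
The strategy is to compute $Cov\left(\beta_{jk},\beta_{jk'}\right)$ in closed form, reduce the correlation to a ratio of ``needlet kernels'', and then bound numerator and denominator separately. Starting from (\ref{needcoeff}) and using $\mathbb{E}(a_{lm}\overline{a}_{l_{1}m_{1}})=\delta_{l}^{l_{1}}\delta_{m}^{m_{1}}C_{l}$ together with the addition theorem $\sum_{m=-l}^{l}Y_{lm}(x)\overline{Y}_{lm}(y)=\tfrac{2l+1}{4\pi}P_{l}(\langle x,y\rangle)$ (and the fact that the $\beta_{jk}$ are real), one obtains
\[
Cov\left(\beta_{jk},\beta_{jk'}\right)=\sqrt{\lambda_{jk}\lambda_{jk'}}\;K_{j}\!\left(\langle\xi_{jk},\xi_{jk'}\rangle\right),\qquad K_{j}(\cos\theta):=\sum_{l}b^{2}\!\left(\frac{l}{B^{j}}\right)C_{l}\,\frac{2l+1}{4\pi}\,P_{l}(\cos\theta).
\]
Since $\mathbb{E}(\beta_{jk})=0$, taking $k=k'$ (so $P_{l}(1)=1$) gives $Var(\beta_{jk})=\lambda_{jk}K_{j}(1)$; hence the cubature weights cancel in the ratio and
\[
Cor\left(\beta_{jk},\beta_{jk'}\right)=\frac{K_{j}\!\left(\langle\xi_{jk},\xi_{jk'}\rangle\right)}{K_{j}(1)}.
\]

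For the denominator, $K_{j}(1)=\sum_{l}b^{2}(l/B^{j})C_{l}\tfrac{2l+1}{4\pi}$, which by (\ref{boundden}) (equivalently Proposition \ref{propKj}, evaluated at coincident points) satisfies $K_{j}(1)\geq c\,B^{(2-\alpha_{0})j}$ for $j$ large. For the numerator, write $C_{l}=l^{-\alpha_{0}}G(l)$ and rescale: with $\phi_{j}(u):=b^{2}(u)\,u^{-\alpha_{0}}\,G(B^{j}u)$ one has $b^{2}(l/B^{j})C_{l}=B^{-\alpha_{0}j}\phi_{j}(l/B^{j})$, so
\[
K_{j}(\cos\theta)=B^{-\alpha_{0}j}\sum_{l}\phi_{j}\!\left(\frac{l}{B^{j}}\right)\frac{2l+1}{4\pi}\,P_{l}(\cos\theta).
\]
Condition \ref{REGULNEED0} --- precisely the bounds $|G^{(r)}(u)|\leq c_{r}u^{-r}$ --- guarantees that $\phi_{j}\in C^{\infty}$, is supported in $[B^{-1},B]$, and satisfies $\sup_{j}\|\phi_{j}^{(r)}\|_{\infty}<\infty$ for every $r$ (differentiating $G(B^{j}u)$ produces factors $B^{jr}G^{(r)}(B^{j}u)$ that are absorbed by $c_{r}(B^{j}u)^{-r}$ uniformly on the fixed compact support). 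Thus $K_{j}$ is, up to the scalar $B^{-\alpha_{0}j}$, a Littlewood--Paley--type kernel with a smooth, compactly supported symbol whose derivatives are controlled uniformly in $j$, and the quasi-exponential localization of such kernels --- the iterated summation-by-parts argument of \cite{npw1} (see also \cite{bkmpAoS}, \cite{mayeli}), which transfers $M$ finite differences onto $\phi_{j}$ at a cost $B^{-jM}$ against the Gegenbauer structure of the $P_{l}$ --- gives, for each $M$, a constant $c_{M}$ with
\[
\left|K_{j}(\cos\theta)\right|\leq\frac{c_{M}\,B^{-\alpha_{0}j}\,B^{2j}}{\left(1+B^{j}\theta\right)^{M}}=\frac{c_{M}\,B^{(2-\alpha_{0})j}}{\left(1+B^{j}\theta\right)^{M}}.
\]
Dividing by the lower bound on $K_{j}(1)$ and setting $\theta=\arccos\langle\xi_{jk},\xi_{jk'}\rangle=d(\langle\xi_{jk},\xi_{jk'}\rangle)$ yields the claim with $C_{M}=c_{M}/c$.

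The covariance computation and the cancellation of the weights are routine; the substantive step --- and the only real obstacle --- is the localization bound on $K_{j}$. Its delicate point is not the compact support of $b^{2}$ but the verification that multiplying the symbol by the slowly varying factor $G(l)$ preserves the uniform-in-$j$ control of \emph{all} derivatives of the rescaled symbol $\phi_{j}$; this is exactly what Condition \ref{REGULNEED0} delivers, and once $\phi_{j}$ is seen to range over a bounded subset of $C^{\infty}$ with support in the fixed annulus $[B^{-1},B]$, the decay follows verbatim from the needlet localization proof. Alternatively, since the statement is quoted from \cite{bkmpAoS}, one may simply invoke the corresponding lemma there.
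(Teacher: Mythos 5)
Your proposal is correct and follows essentially the route the paper itself indicates: the paper does not reprove this lemma but quotes it from \cite{bkmpAoS} ``in view of (\ref{boundden})'', and your argument --- writing the correlation as the ratio $K_{j}(\cos \theta )/K_{j}(1)$ after cancellation of the cubature weights, bounding the denominator below via (\ref{boundden}), and bounding the numerator by the localization of needlet-type kernels whose rescaled symbols $b^{2}(u)u^{-\alpha _{0}}G(B^{j}u)$ are uniformly controlled in every $C^{r}$ norm thanks to Condition \ref{REGULNEED0} --- is precisely the proof given in that reference. Nothing further is needed.
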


As a direct consequence of this lemma, needlets coefficients at any finite
distance are asymptotically uncorrelated, and hence asymptotically
independent in the Gaussian case.

Following (\ref{needcoeff}) and \cite{npw1}, we have easily that:%
\begin{eqnarray*}
\sum_{k}\beta _{jk}^{2} &=&\sum_{B^{j-1}<l<B^{j+1}}b^{2}\left( \frac{l}{B^{j}%
}\right) \sum_{m=-l}^{l}\left\vert a_{lm}\right\vert ^{2} \\
&=&\sum_{B^{j-1}<l<B^{j+1}}b^{2}\left( \frac{l}{B^{j}}\right) \left(
2l+1\right) \widehat{C}_{l}\text{ ,}
\end{eqnarray*}%
where, by equation (\ref{powest2}):

\begin{equation}
\mathbb{E}\left( \sum_{k}\beta _{jk}^{2}\right)
=\sum_{B^{j-1}<l<B^{j+1}}b^{2}\left( \frac{l}{B^{j}}\right) \left(
2l+1\right) C_{l}\left( 1+O\left( l^{-1}\right) \right) \text{ .}
\label{expbeta2}
\end{equation}

The following Lemma provides the asymptotic behaviour of $Var\left(
\sum_{k}\beta _{jk}^{2}\right) $.

\begin{lemma}
\label{gavarini} Under Condition \ref{Cl-reg}, we have%
\begin{equation*}
\lim_{j\rightarrow \infty }\frac{1}{B^{2\left( 1-\alpha _{0}\right) j}}%
Var\left\{ \sum_{k=1}^{N_{j}}\beta _{jk}^{2}\right\} =G_{0}^{2}\sigma
^{2}(\alpha _{0},B)\text{ ,}
\end{equation*}%
and%
\begin{eqnarray*}
\lim_{j\rightarrow \infty }\frac{1}{B^{2\left( 1-\alpha _{0}\right) j}}%
Cov\left\{ \sum_{k=1}^{N_{j}}\beta _{jk}^{2},\sum_{k_{1}=1}^{N_{j_{1}}}\beta
_{j_{1},k_{1}}^{2}\right\} &=&G_{0}^{2}\tau _{+}^{2}(\alpha _{0},B)\text{ ,
for }j_{1}=j+1\text{ ,} \\
&=&G_{0}^{2}\tau _{-}^{2}(\alpha _{0},B)\text{ , for }j_{1}=j-1\text{ ,} \\
&=&0\text{ , for }\left\vert j_{1}-j\right\vert \geq 2\text{ ,}
\end{eqnarray*}%
where%
\begin{eqnarray*}
\sigma ^{2}(\alpha _{0},B) &:&=4\int_{B^{-1}}^{B}b^{4}(x)x^{1-2\alpha _{0}}dx%
\text{ , } \\
\tau _{+}^{2}(\alpha _{0},B)\text{ } &:&=4\int_{1}^{B}b^{2}(x)b^{2}(\frac{x}{%
B})x^{1-2\alpha _{0}}dx\text{ ,} \\
\tau _{-}^{2}(\alpha _{0},B)\text{ } &:&=\frac{4}{B^{2-2\alpha _{0}}}%
\int_{1}^{B}b^{2}(x)b^{2}(\frac{x}{B})x^{1-2\alpha _{0}}dx\text{ .}
\end{eqnarray*}
\end{lemma}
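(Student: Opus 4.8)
The plan is to reduce everything to an explicit computation with chi-squared random variables, exploiting the key algebraic identity already recorded in the excerpt, namely
\begin{equation*}
\sum_{k}\beta _{jk}^{2}=\sum_{B^{j-1}<l<B^{j+1}}b^{2}\!\left( \frac{l}{B^{j}}\right) \sum_{m=-l}^{l}\left\vert a_{lm}\right\vert ^{2}\text{ .}
\end{equation*}
Since the field is Gaussian and isotropic, the sums $S_{l}:=\sum_{m=-l}^{l}|a_{lm}|^{2}$ are independent across $l$ with $S_{l}\sim C_{l}\,\chi^{2}_{2l+1}$, hence $\mathrm{Var}(S_{l})=2(2l+1)C_{l}^{2}$. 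Therefore
\begin{equation*}
\mathrm{Var}\!\left( \sum_{k}\beta _{jk}^{2}\right) =\sum_{B^{j-1}<l<B^{j+1}}b^{4}\!\left( \frac{l}{B^{j}}\right) 2(2l+1)C_{l}^{2}\text{ ,}
\end{equation*}
and for the covariance only the overlapping range $B^{j}<l<B^{j+1}$ (when $j_{1}=j+1$) contributes, since $b\left(l/B^{j}\right)b\left(l/B^{j_{1}}\right)=0$ identically once $|j_{1}-j|\geq 2$ by the compact support of $b$ in $[B^{-1},B]$; this immediately gives the vanishing of the covariance for $|j_{1}-j|\geq 2$. For $j_{1}=j+1$ the same independence yields
\begin{equation*}
\mathrm{Cov}\!\left( \sum_{k}\beta _{jk}^{2},\sum_{k_{1}}\beta _{j+1,k_{1}}^{2}\right) =\sum_{l}b^{2}\!\left( \frac{l}{B^{j}}\right) b^{2}\!\left( \frac{l}{B^{j+1}}\right) 2(2l+1)C_{l}^{2}\text{ ,}
\end{equation*}
and the $j_{1}=j-1$ case is the same expression with $j$ replaced by $j-1$, which is where the extra factor $B^{2(1-\alpha_{0})}$ in $\tau_{-}^{2}$ relative to $\tau_{+}^{2}$ will come from after renormalization.

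Next I would substitute $C_{l}=l^{-\alpha_{0}}G(l)$ from Condition \ref{REGULNEED0}, so $C_{l}^{2}=l^{-2\alpha_{0}}G(l)^{2}$ with $G$ bounded above and below. Writing $x=l/B^{j}$, the sum $\sum_{l}b^{4}(l/B^{j})2(2l+1)l^{-2\alpha_{0}}G(l)^{2}$ is a Riemann sum: factoring out $B^{j(2-2\alpha_{0})}$, each term is (up to $G(l)^{2}$) approximately $4\,b^{4}(x)\,x^{1-2\alpha_{0}}$ times the mesh width $B^{-j}$. To make the limit identification clean I would invoke Condition \ref{Cl-reg} together with the argument already sketched around equation (\ref{boundden}) and deferred to Proposition \ref{propKj} in the Appendix: the regular-variation bounds on $G$ and its derivatives let one replace $G(B^{j}x)^{2}$ by its limiting constant inside the integral up to an $o_{j}(1)$ error — but note $G_{0}$ is \emph{not} assumed here (we are only under \ref{REGULNEED0}/\ref{Cl-reg}), so more precisely one shows $G(B^j x)^2 = G_0^2(1+o_j(1))$ is \emph{not} available; instead the statement as written presupposes the $G_0$ normalization, so I would read Lemma \ref{gavarini} as holding under the running convention that $G(l)\to G_{0}$, consistent with the $G_{0}^{2}$ appearing on the right-hand side. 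Granting that, the Riemann sum converges to $G_{0}^{2}\cdot 4\int_{B^{-1}}^{B}b^{4}(x)x^{1-2\alpha_{0}}\,dx=G_{0}^{2}\sigma^{2}(\alpha_{0},B)$, and the analogous computations give $\tau_{\pm}^{2}$, with the change of variables $x\mapsto x/B$ in the cross term accounting for the stated shift of the integration limits to $[1,B]$.

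The main obstacle is the Riemann-sum-to-integral passage made uniform in $j$: one must control the discretization error in $\sum_{l}b^{4}(l/B^{j})(2l+1)l^{-2\alpha_{0}}G(l)^{2}$ against the integral $\int b^{4}(x)x^{1-2\alpha_{0}}G(B^{j}x)^{2}\,dx$ at rate $o(B^{j(2-2\alpha_{0})})$. Here the smoothness $b\in C^{\infty}$ and compact support are essential (the integrand is $C^{\infty}$ with support bounded away from $0$, so the midpoint/Euler–Maclaurin error is $O(B^{-j})$ relative to the main term), and the derivative bounds $|G^{(r)}(u)|\leq c_{r}u^{-r}$ from Condition \ref{REGULNEED0} guarantee that differentiating $G(B^{j}x)^{2}$ in $x$ does not spoil this. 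I expect this to be essentially the content of Proposition \ref{propKj} in the Appendix, so in the body of the proof I would quote that proposition and reduce Lemma \ref{gavarini} to it, spending the actual work only on: (i) the exact variance/covariance formulas via $\chi^{2}$ independence, (ii) the vanishing for $|j_{1}-j|\geq 2$ by support disjointness, and (iii) bookkeeping the powers of $B$ so the $\tau_{-}^{2}$ prefactor $B^{-(2-2\alpha_{0})}$ emerges correctly.
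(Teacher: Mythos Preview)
Your proposal is correct and follows essentially the same route as the paper: reduce $\sum_k \beta_{jk}^2$ to $\sum_l b^2(l/B^j)(2l+1)\widehat{C}_l$, use independence of the $\widehat{C}_l$ and $\mathrm{Var}(\widehat{C}_l)=2C_l^2/(2l+1)$ to get the closed-form variance and covariance sums, observe that disjoint supports kill the covariance for $|j_1-j|\geq 2$, and then pass to the limit by recognizing Riemann sums. The paper's proof is simply terser on the Riemann-sum step (it writes the limit directly without invoking Proposition~\ref{propKj} or discussing Euler--Maclaurin error), and your observation that the appearance of $G_0^2$ on the right really requires $G(l)\to G_0$ (i.e.\ something like Condition~\ref{REGULNEED} rather than merely the boundedness in Condition~\ref{REGULNEED0}/equation~(\ref{Cl-reg})) is a fair reading of a labeling slip in the statement.
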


For the sake of notational simplicity, in the sequel we shall write $\sigma
^{2},\tau _{+}^{2},\tau _{-}^{2}$ (omitting the dependence on $\alpha
_{0},B) $ whenever this does not entail any risk of confusion.

\begin{proof}
Simple calculations based on (\ref{powest2}) lead to:
\begin{equation*}
Var\left\{ \sum_{k=1}^{N_{j}}\beta _{jk}^{2}\right\}
=2\sum_{l=B^{j-1}}^{B^{j+1}}b^{4}(\frac{l}{B^{j}})\left( 2l+1\right)
C_{l}^{2}\text{ ,}
\end{equation*}%
and, for $j_{1}<j_{2}$,%
\begin{equation*}
Cov\left\{ \sum_{k_{1}=1}^{N_{j}}\beta
_{j_{1},k_{1}}^{2},\sum_{k_{2}=1}^{N_{j}}\beta _{j_{2},k_{2}}^{2}\right\}
\end{equation*}%
\begin{equation*}
=\sum_{l_{1}=B^{j_{1}-1}}^{B^{j_{1}+1}}%
\sum_{l_{2}=B^{j_{2}-1}}^{B^{j_{2}+1}}b^{2}\left( \frac{l_{1}}{B^{j_{1}}}%
\right) b^{2}\left( \frac{l_{2}}{B^{j_{2}}}\right) \left( 2l_{1}+1\right)
\left( 2l_{2}+1\right) Cov\left\{ \widehat{C}_{l_{1}},\widehat{C}%
_{l_{2}}\right\}
\end{equation*}%
\begin{equation*}
=\left\{
\begin{array}{c}
\sum_{l=B^{j_{2}-1}}^{B^{j_{1}+1}}b^{2}(\frac{l}{B^{j_{1}}})b^{2}(\frac{l}{%
B^{j_{2}}})\left( 2l+1\right) 2C_{l}^{2}\text{ , for }j_{2}=j_{1}\pm 1 \\
0\text{ , otherwise}%
\end{array}%
\right. \text{ .}
\end{equation*}

Finally, we have:%
\begin{equation*}
\lim_{j\rightarrow \infty }\frac{2}{B^{j(2-2\alpha _{0})}}%
\sum_{l=B^{j-1}}^{B^{j+1}}b^{4}\left( \frac{l}{B^{j}}\right) \left(
2l+1\right) C_{l}^{2}
\end{equation*}%
\begin{eqnarray*}
&=&4G_{0}^{2}\lim_{j\rightarrow \infty
}\sum_{l=B^{j-1}}^{B^{j+1}}b^{4}\left( \frac{l}{B^{j}}\right) \frac{l}{B^{j}}%
\frac{l^{-2\alpha _{0}}}{B^{-2\alpha _{0}}}\frac{1}{B^{j}} \\
&=&G_{0}^{2}\left( 4\int_{B^{-1}}^{B}b^{4}\left( x\right) x^{1-2\alpha
_{0}}dx\right) =:G_{0}^{2}\sigma ^{2}\text{ ,}
\end{eqnarray*}%
and, if $j_{2}=j+1$
\begin{eqnarray*}
&&\lim_{j\rightarrow \infty }\frac{2}{B^{j\left( 2-2\alpha _{0}\right) }}%
\sum_{l=B^{j}}^{B^{j+1}}b^{2}\left( \frac{l}{B^{j}}\right) b^{2}\left( \frac{%
l}{B^{j+1}}\right) \left( 2l+1\right) C_{l}^{2} \\
&=&G_{0}^{2}\left( 4\int_{1}^{B}b^{2}\left( x\right) b^{2}\left( \frac{x}{B}%
\right) x^{1-2\alpha _{0}}dx\right) \text{ }=:G_{0}^{2}\tau _{+}^{2}\text{ ,}
\end{eqnarray*}%
while if $j_{2}=j-1$%
\begin{eqnarray*}
&&\lim_{j\rightarrow \infty }\frac{2}{B^{j\left( 2-2\alpha _{0}\right) }}%
\sum_{l=B^{j-1}}^{B^{j}}b^{2}\left( \frac{l}{B^{j}}\right) b^{2}\left( \frac{%
l}{B^{j-1}}\right) \left( 2l+1\right) C_{l}^{2} \\
&=&G_{0}^{2}\left( 4\int_{B^{-1}}^{1}b^{2}\left( x\right) b^{2}\left(
Bx\right) x^{1-2\alpha _{0}}dx\right) \text{ }= \\
&=&G_{0}^{2}\text{ }\left( \frac{4}{B^{2-2\alpha _{0}}}\int_{1}^{B}b^{2}%
\left( x\right) b^{2}\left( \frac{x}{B}\right) x^{1-2\alpha _{0}}dx\right)
=:G_{0}^{2}\tau _{-}^{2},
\end{eqnarray*}%
as claimed.
\end{proof}

\section{ A Needlet Whittle-like approximation to the likelihood function
\label{sec: needwhittle}}

Our aim in this Section is to discuss heuristically a needlet Whittle-like
approximation for the log-likelihood of isotropic spherical Gaussian fields,
and to derive the corresponding estimator. We start from the assumption that
needlet coefficients can be evaluated exactly, i.e. without observational or
numerical error, up to resolution level $J_{L}$. This is clearly a
simplified picture, analogous to what we assumed in \cite{dlm} for the case
of spherical harmonic coefficients; however in the wavelet case the
assumption can be considered much more realistic. Indeed, it is shown for
instance in \cite{bkmpAoS} that the effect of masked or unobserved regions
is asymptotically negligible, in view of the localization properties of the
needlet transform. Hence we believe our results provide a useful guidance
also for realistic experimental situations. Needless to say, the maximal
observed scale $J_{L}$ grows larger and larger when more sophisticated
experiments are undertaken: indeed $J_{L}$ is a monotonically increasing
function of the maximal observed multipole $L$. The latter is for instance
in the order of 500/600 for data collected from $WMAP$ and 1500/2000 for
those from $Planck$. In terms of our following discussion, it is harmless to
envisage that $B^{J_{L}+1}=L$. The analysis of frequency-domain approximate
maximum likelihood estimators based on spherical harmonics is described in
\cite{dlm}, while narrow-band, wavelet-based maximum likelihood estimators
over $\mathbb{R}$ can be found in \cite{Mrt}.

To motivate heuristically our objective function, consider the vector of
coefficients%
\begin{equation*}
\overrightarrow{\beta }_{j}=\left( \beta _{j1},\beta _{j2},...,\beta
_{jN_{j}}\right) \text{ .}
\end{equation*}%
Under the hypothesis of isotropy and Gaussianity for $T$, we have that $%
\overrightarrow{\beta }_{j}\sim N\left( 0,\Gamma \right) $, where%
\begin{equation*}
\Gamma =\left[ Cov\left( \beta _{jk},\beta _{jk^{\prime }}\right) \right]
_{k,k^{\prime }}=\sqrt{\lambda _{jk}\lambda _{jk^{\prime }}}%
\sum_{l}b^{2}\left( \frac{l}{B^{j}}\right) \frac{\left( 2l+1\right) }{4\pi }%
C_{l}P_{l}\left( \left\langle \xi _{jk},\xi _{jk\prime }\right\rangle
\right) \text{ .}
\end{equation*}%
In view of Lemma \ref{Lemmacor} and equation (\ref{Njdef}), it is to some
extent natural to consider the approximation
\begin{equation*}
\Gamma \simeq \frac{4\pi }{N_{j}}\left( \sum_{l}b^{2}\left( \frac{l}{B^{j}}%
\right) \frac{\left( 2l+1\right) }{4\pi }C_{l}\right) I_{N_{j}}\text{ ,}
\end{equation*}%
where $I_{N_{j}}$ denotes the $N_{j}\times N_{j}$ identity matrix. We
stress, however, that the present argument is merely heuristic - indeed, for
instance, elements on the first diagonal do not converge to zero. The
approximation however motivates the introduction of the pseudo-likelihood
function:%
\begin{equation*}
\mathcal{L}\left( \vartheta ;\overrightarrow{\beta }_{j}\right) =\left( 2\pi
\right) ^{-N_{j}}\left( \det \Gamma \right) ^{-\frac{1}{2}}\exp \left( -%
\frac{1}{2}\overrightarrow{\beta }^{T}\Gamma ^{-1}\overrightarrow{\beta }%
\right)
\end{equation*}%
and the corresponding log-likelihood as:%
\begin{equation*}
-2\log \mathcal{L}\left( \vartheta ;\overrightarrow{\beta }_{j}\right)
\end{equation*}%
\begin{equation*}
\simeq \sum_{k}\left[ \frac{\beta _{jk}^{2}}{N_{j}^{-1}\sum_{l}b^{2}\left(
\frac{l}{B^{j}}\right) \left( 2l+1\right) C_{l}(\vartheta )}-\log \left(
\frac{\beta _{jk}^{2}}{N_{j}^{-1}\sum_{l}b^{2}\left( \frac{l}{B^{j}}\right)
\left( 2l+1\right) C_{l}(\vartheta )}\right) \right] \text{ ,}
\end{equation*}%
up to an additive constant. The full (pseudo-)likelihood is obtained by
combining together all scales $j$, so that%
\begin{equation*}
-2\log \mathcal{L}\left( \vartheta ;...\overrightarrow{\beta }_{j},...%
\overrightarrow{\beta }_{J_{L}}\right)
\end{equation*}%
\begin{equation*}
\simeq \sum_{j=1}^{J_{L}}\left[ \frac{\sum_{k}\beta _{jk}^{2}}{%
N_{j}^{-1}\sum_{l}b^{2}\left( \frac{l}{B^{j}}\right) \left( 2l+1\right)
C_{l}(\vartheta )}-\sum_{k}\log \left( \frac{\beta _{jk}^{2}}{%
N_{j}^{-1}\sum_{l}b^{2}\left( \frac{l}{B^{j}}\right) \left( 2l+1\right)
C_{l}(\vartheta )}\right) \right] \text{.}
\end{equation*}%
Let us now introduce the following:

\begin{definition}
\label{Kfunctions}For $\alpha \in (2,+\infty \equiv A$, define the function
\begin{equation*}
K_{j}\left( \alpha \right) =\frac{1}{N_{j}}\sum_{l}b^{2}\left( \frac{l}{B^{j}%
}\right) \left( 2l+1\right) l^{-\alpha }\text{,}
\end{equation*}%
with derivatives $K_{j,u}\left( \alpha \right) $ given by%
\begin{equation*}
K_{j,u}\left( \alpha \right) =\frac{d}{d\alpha }K_{j}\left( \alpha \right) =%
\frac{\left( -1\right) ^{u}}{N_{j}}\sum_{l}b^{2}\left( \frac{l}{B^{j}}%
\right) \left( 2l+1\right) l^{-\alpha }\left( \log l\right) ^{u}.
\end{equation*}
\end{definition}

Our objective function will hence be written compactly as:
\begin{eqnarray*}
\mathcal{R}_{J_{L}}(G,\alpha ) &:&=-2\log \mathcal{L}(G,\alpha ;%
\overrightarrow{\beta }_{j}) \\
&=&\sum_{j=1}^{J_{L}}\left[ \frac{\sum_{k}\beta _{jk}^{2}}{GK_{j}\left(
\alpha \right) }-\sum_{k}\log \left( \frac{\beta _{jk}^{2}}{GK_{j}\left(
\alpha \right) }\right) \right] \\
&=&\sum_{j=1}^{J_{L}}\left[ \frac{1}{G}\frac{\sum_{k}\beta _{jk}^{2}}{%
K_{j}\left( \alpha \right) }+N_{j}\log G-\sum_{k}\log \left( \frac{\beta
_{jk}^{2}}{K_{j}\left( \alpha \right) }\right) \right] \text{ .}
\end{eqnarray*}%
More precisely, in view of Condition \ref{REGULNEED} and the discussion in
the previous Section, the following Definition seems rather natural:

\begin{definition}
The \emph{Needlet Spherical Whittle estimator }for the parameters $(\alpha
_{0},G_{0})$ is provided by%
\begin{equation*}
\left( \widehat{\alpha }_{J_{L}},\widehat{G}_{J_{L}}\right) :=\arg
\min_{\alpha \in A,G\in \Gamma }\mathcal{R}_{J_{L}}\left( G,\alpha \right)
\text{ .}
\end{equation*}
\end{definition}

\begin{remark}
To ensure that the estimator exists, as usual we shall assume throughout
this paper that the parameter space is a compact subset of $\mathbb{R}^{2};$
more precisely we take $\alpha \in A=\left[ a_{1},a_{2}\right] ,$ $%
2<a_{1}<a_{2}<\infty ,$ and $G\in \Gamma =\left[ \gamma _{1},\gamma _{2}%
\right] ,$ $0<\gamma _{1}<\gamma _{2}<\infty .$ This is little more than a
formal requirement that is standard in the literature on (pseudo-)maximum
likelihood estimation.
\end{remark}

We can rewrite in a more transparent form the previous estimator following
an argument analogous to \cite{Robinson}, i.e. \textquotedblleft
concentrating out\textquotedblright\ the parameter $G$. Indeed, the previous
minimization problem is equivalent to consider%
\begin{equation*}
\left( \widehat{\alpha }_{J_{L}},\widehat{G}_{J_{L}}\right) :=\arg
\min_{\alpha ,G}\mathcal{R}_{J_{L}}\left( G,\alpha \right) \text{ .}
\end{equation*}%
It is readily seen that:%
\begin{equation*}
\frac{\partial }{\partial G}\mathcal{R}_{J_{L}}\left( G,\alpha \right)
=\sum_{j=1}^{J_{L}}\left[ -\frac{1}{G^{2}}\frac{\sum_{k}\beta _{jk}^{2}}{%
K_{j}\left( \alpha \right) }+\frac{N_{j}}{G}\right]
\end{equation*}%
\begin{equation}
\frac{\partial }{\partial G}\mathcal{R}_{J_{L}}\left( G,\alpha \right)
=0\Longleftrightarrow G=\widehat{G}\left( \alpha \right) :=\frac{1}{%
\sum_{j=1}^{J_{L}}N_{j}}\sum_{j=1}^{J_{L}}\frac{\sum_{k=1}^{N_{j}}\beta
_{jk}^{2}}{K_{j}\left( \alpha \right) }  \label{G_est}
\end{equation}%
and because
\begin{equation*}
\frac{\partial ^{2}}{\partial G^{2}}\mathcal{R}_{J_{L}}(G,\alpha )=\frac{1}{%
G^{2}}\sum_{j=1}^{J_{L}}\sum_{k=1}^{N_{j}}\left[ \frac{2}{G}\frac{\beta
_{jk}^{2}}{K_{j}\left( \alpha \right) }-1\right] \text{,}
\end{equation*}%
we obtain%
\begin{eqnarray*}
\left. \frac{\partial ^{2}}{\partial G^{2}}\mathcal{R}_{J_{L}}(G,\alpha
)\right\vert _{G=\widehat{G}\left( \alpha \right) } &=&\frac{1}{\widehat{G}%
\left( \alpha \right) ^{2}}\sum_{j=1}^{J_{L}}\sum_{k=1}^{N_{j}}\left[ \frac{2%
}{\widehat{G}\left( \alpha \right) }\frac{\beta _{jk}^{2}}{K_{j}\left(
\alpha \right) }-1\right] \\
&=&\frac{\sum_{j=1}^{J_{L}}N_{j}}{\widehat{G}\left( \alpha \right) ^{2}}>0%
\text{ .}
\end{eqnarray*}%
Hence $\widehat{G}\left( \alpha \right) $ maximizes $\mathcal{R}%
_{J_{L}}(G,\alpha )$ for any given value of $\alpha $. It remains to compute
\begin{equation}
\widehat{\alpha }=\arg \min_{\alpha \in A}\left\{ \emph{R}_{J_{L}}\left(
\alpha \right) \right\}  \label{alphaneed}
\end{equation}%
where%
\begin{eqnarray*}
\emph{R}_{J_{L}}\left( \alpha \right) &:&=\frac{\mathcal{R}_{J_{L}}\left(
\widehat{G}\left( \alpha \right) ,\alpha \right) }{\sum_{j}N_{j}}-1 \\
&=&\log \sum_{j=1}^{J_{L}}\frac{\sum_{k}\beta _{jk}^{2}}{K_{j}\left( \alpha
\right) }+\frac{1}{\sum_{j=1}^{J_{L}}N_{j}}\sum_{j=1}^{J_{L}}N_{j}\log
K_{j}\left( \alpha \right) \text{.}
\end{eqnarray*}

\section{Asymptotic properties \label{asproperties}}

In this Section we investigate the asymptotic properties of the estimators $%
\widehat{\alpha }_{J_{L}}$ and $\widehat{G}_{J_{L}}$. We start by studying
their asymptotic consistency: in order to achieve this result, we will apply
a technique developed by \cite{brillinger} and \cite{Robinson}, see also
\cite{dlm}.

\begin{theorem}
\label{consistenza} Under Condition \ref{REGULNEED}, as $J_{L}\rightarrow
\infty $ we have:
\begin{equation*}
\left( \widehat{\alpha }_{J_{L}},\widehat{G}_{J_{L}}\right) \rightarrow
_{p}\left( \alpha _{0},G_{0}\right) \text{ .}
\end{equation*}
\end{theorem}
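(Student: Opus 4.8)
The plan is to follow the classical two-step argument for consistency of minimum-contrast (Whittle-type) estimators, as in \cite{brillinger}, \cite{Robinson} and \cite{dlm}, adapted to the needlet setting. First I would establish consistency of the concentrated estimator $\widehat{\alpha}_{J_L}$ by showing that the objective function $\mathit{R}_{J_L}(\alpha)$ converges in probability, uniformly on the compact set $A$, to a non-random limit function $\mathit{R}(\alpha)$ which is uniquely minimized at $\alpha_0$. Then $\widehat{G}_{J_L} \to_p G_0$ follows by plugging $\widehat{\alpha}_{J_L}$ into the explicit formula (\ref{G_est}) for $\widehat{G}(\alpha)$ and using the continuity of $\widehat{G}(\cdot)$ together with a law-of-large-numbers argument.

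The key steps, in order, would be the following. (i) Deterministic analysis of $K_j(\alpha)$: using Definition \ref{Kfunctions} and a Riemann-sum approximation (cf. the heuristic before Proposition \ref{propKj}), show that $B^{(\alpha-2)j} K_j(\alpha) \to c \int_{B^{-1}}^{B} b^2(x)\,x^{1-\alpha}\,dx =: \omega(\alpha)$, and similarly control the derivatives $K_{j,u}(\alpha)$, with all convergences uniform in $\alpha \in A$; this gives the ratio $K_j(\alpha)/K_j(\alpha_0) \sim B^{-(\alpha-\alpha_0)j}\,\omega(\alpha)/\omega(\alpha_0)$. (ii) Stochastic analysis of $\sum_k \beta_{jk}^2$: from (\ref{expbeta2}) one has $\mathbb{E}\sum_k \beta_{jk}^2 = G_0 N_j K_j(\alpha_0)(1+O(B^{-j}))$ under Condition \ref{REGULNEED}, and from Lemma \ref{gavarini} the variance is $O(B^{2(1-\alpha_0)j})$, i.e. of smaller order than the squared mean (which is of order $B^{2(2-\alpha_0)j}$); hence $\frac{1}{N_j}\frac{\sum_k \beta_{jk}^2}{K_j(\alpha_0)} \to_p G_0$. (iii) Combine: write $\sum_{j=1}^{J_L} \frac{\sum_k \beta_{jk}^2}{K_j(\alpha)} = \sum_j N_j K_j(\alpha_0)/K_j(\alpha) \cdot \big( \tfrac{1}{N_j}\tfrac{\sum_k\beta_{jk}^2}{K_j(\alpha_0)} \big)$, so that after dividing by the dominant term and taking logs, $\mathit{R}_{J_L}(\alpha) - \mathit{R}_{J_L}(\alpha_0) \to_p D(\alpha,\alpha_0)$, a deterministic function that by a convexity/Jensen argument (the map $t \mapsto t - \log t$, exactly as in the $\mathcal{R}$ construction) is strictly positive for $\alpha \neq \alpha_0$ and vanishes at $\alpha_0$. (iv) Uniformity: upgrade the pointwise convergences in (i)--(iii) to uniform ones on $A$ using equicontinuity of $\alpha \mapsto K_j(\alpha)$ (bounded derivatives on the compact $A$) and a standard covering/Arzel\`a–Ascoli argument, then invoke the usual argmin-continuity lemma to conclude $\widehat{\alpha}_{J_L} \to_p \alpha_0$.

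The main obstacle I expect is step (ii)–(iii): controlling $\sum_{j=1}^{J_L}\frac{\sum_k\beta_{jk}^2}{K_j(\alpha)}$ as $J_L\to\infty$, because the summands grow geometrically in $j$ (like $B^{(2-\alpha_0)j}$ when $\alpha_0<2$, or are dominated by the last few scales otherwise), so the sum is \emph{not} a normalized average to which a naive weak law applies — one must renormalize scale-by-scale and show the fluctuations, whose variances are summable at the right rate by Lemma \ref{gavarini} (including the nonzero covariances at adjacent scales $j_1=j\pm1$), are asymptotically negligible relative to the leading deterministic term. A secondary technical point is ensuring that the $O(l^{-1})$ error in $\mathbb{E}(\widehat C_l)$ from Condition \ref{REGULNEED} indeed contributes only a lower-order perturbation uniformly across all scales $j \le J_L$, which requires care since the number of scales diverges; this is handled by noting the relative error at scale $j$ is $O(B^{-j})$, hence summable.
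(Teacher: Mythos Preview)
Your strategy is the same as the paper's: decompose $R_{J_L}(\alpha)-R_{J_L}(\alpha_0)$ into a deterministic ``identifiability'' piece $U_{J_L}(\alpha,\alpha_0)=\log\bigl(\tfrac{1}{\sum_j N_j}\sum_j N_j \tfrac{K_j(\alpha_0)}{K_j(\alpha)}\bigr)-\tfrac{1}{\sum_j N_j}\sum_j N_j\log\tfrac{K_j(\alpha_0)}{K_j(\alpha)}$ and a stochastic piece $T_{J_L}=\log\tfrac{\widehat G(\alpha_0)}{G(\alpha_0)}-\log\tfrac{\widehat G(\alpha)}{G(\alpha)}$, show $\sup_\alpha|T_{J_L}|=o_p(1)$ via Lemma~\ref{gavarini} exactly as you outline, and show $U_{J_L}$ is bounded below by a positive constant on $\{|\alpha-\alpha_0|>\varepsilon\}$. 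Two points where your sketch is too optimistic and the paper does extra work:

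\medskip
\noindent\textbf{(a) The limit $D(\alpha,\alpha_0)$ is not finite on all of $A$.} When $\alpha_0-\alpha\ge 2$ the ratio $K_j(\alpha_0)/K_j(\alpha)\asymp B^{j(\alpha-\alpha_0)}$ decays so fast that $U_{J_L}(\alpha,\alpha_0)\to+\infty$; the paper handles the three regimes $\alpha_0-\alpha<2$, $=2$, $>2$ separately (Lemma~\ref{consistency1}), normalizing by $1$, $\log J_L$, $\log B^{J_L}$ respectively. Your ``uniform convergence to a finite $D$ plus Arzel\`a--Ascoli'' claim fails as stated; you need the case split, or at least to note that divergence to $+\infty$ is harmless for the argmin conclusion. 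Also, your Jensen remark (concavity of $\log$) correctly shows $U_{J_L}\ge 0$ with equality iff all $K_j(\alpha_0)/K_j(\alpha)$ coincide, but it does not by itself give a \emph{uniform-in-$J_L$} positive lower bound on $\{|\alpha-\alpha_0|>\varepsilon\}$; the paper obtains this by computing the limit $l(x)=\log\tfrac{B^2-1}{B^{2+x}-1}+\tfrac{xB^2}{B^2-1}\log B$ explicitly and checking $l$ has a strict minimum at $x=0$.

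\medskip
\noindent\textbf{(b) ``Continuity of $\widehat G(\cdot)$'' is not enough for $\widehat G_{J_L}(\widehat\alpha_{J_L})\to_p G_0$.} The maps $\alpha\mapsto \widehat G_{J_L}(\alpha)$ and $\alpha\mapsto G_{J_L}(\alpha)$ are \emph{not} equicontinuous in $J_L$: by (\ref{defGest})--(\ref{defGfun}) and Corollary~\ref{Kjcoroll} their $\alpha$-derivatives are of order $\log B^{J_L}$, so a naive mean-value bound gives $|\widehat G_{J_L}(\widehat\alpha_{J_L})-\widehat G_{J_L}(\alpha_0)|=O_p(|\widehat\alpha_{J_L}-\alpha_0|\log B^{J_L})$, which does not vanish under mere consistency. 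The paper therefore does not appeal to continuity; it writes $\widehat G(\widehat\alpha_{J_L})-G_0$ as a sum of three pieces $G_A+G_B+G_C$ (stochastic fluctuation, $K_j(\alpha_0)/K_j(\widehat\alpha_{J_L})$ vs.\ its limit, and $I(B,\alpha_0,\widehat\alpha_{J_L})-1$) and bounds each directly using $\widehat\alpha_{J_L}\to_p\alpha_0$ together with the explicit geometric weighting $N_j/\sum_j N_j$. Your plan would go through once you replace the continuity appeal by this decomposition.
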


\begin{proof}
Let us write:%
\begin{equation*}
\Delta \emph{R}_{J_{L}}\left( \alpha ,\alpha _{0}\right) :=\emph{R}%
_{J_{L}}\left( \alpha \right) -\emph{R}_{J_{L}}\left( \alpha _{0}\right)
\end{equation*}%
\begin{equation*}
=\log \frac{\widehat{G}\left( \alpha \right) }{G\left( \alpha \right) }-\log
\frac{\widehat{G}\left( \alpha _{0}\right) }{G\left( \alpha _{0}\right) }+%
\frac{1}{\sum_{j=1}^{J_{L}}N_{j}}\sum_{j=1}^{J_{L}}N_{j}\log \frac{%
K_{j}\left( \alpha \right) }{K_{j}\left( \alpha _{0}\right) }+\log \frac{%
G\left( \alpha \right) }{G\left( \alpha _{0}\right) }
\end{equation*}%
\begin{equation*}
=U_{J_{L}}\left( \alpha ,\alpha _{0}\right) -T_{J_{L}}\left( \alpha ,\alpha
_{0}\right) \text{ ,}
\end{equation*}%
where%
\begin{eqnarray*}
G\left( \alpha \right) &:&=\frac{1}{\sum_{j}N_{j}}\sum_{j=1}^{J_{L}}N_{j}%
\frac{G_{0}K_{j}\left( \alpha _{0}\right) }{K_{j}\left( \alpha \right) }%
\text{ ,} \\
T_{J_{L}}\left( \alpha ,\alpha _{0}\right) &:&=\log \frac{\widehat{G}\left(
\alpha _{0}\right) }{G\left( \alpha _{0}\right) }-\log \frac{\widehat{G}%
\left( \alpha \right) }{G\left( \alpha \right) } \\
U_{J_{L}}\left( \alpha ,\alpha _{0}\right) &:&=\frac{1}{%
\sum_{j=1}^{J_{L}}N_{j}}\sum_{j=1}^{J_{L}}N_{j}\log \frac{K_{j}\left( \alpha
\right) }{K_{j}\left( \alpha _{0}\right) }+\log \frac{G\left( \alpha \right)
}{G\left( \alpha _{0}\right) }\text{ .}
\end{eqnarray*}%
It is easy to see that:
\begin{equation*}
G\left( \alpha _{0}\right) =G_{0}\text{ , }\log \frac{G\left( \alpha \right)
}{G\left( \alpha _{0}\right) }=\log \frac{1}{\sum_{j}N_{j}}%
\sum_{j=1}^{J_{L}}N_{j}\frac{K_{j}\left( \alpha _{0}\right) }{K_{j}\left(
\alpha \right) }\text{ .}
\end{equation*}

The proof is then completed with the aid of the auxiliary Lemmas \ref%
{consistency1}, \ref{consistency2} which we shall discuss below. In
particular%
\begin{eqnarray*}
\Pr \left( \left\vert \widehat{\alpha }_{J_{L}}-\alpha _{0}\right\vert
>\varepsilon \right) &\leq &\Pr \left( \inf_{\left\vert \alpha -\alpha
_{0}\right\vert >\varepsilon }\Delta \emph{R}_{J_{L}}(\alpha ,\alpha
_{0})\leq 0\right) \\
&\leq &\Pr \left( \inf_{\left\vert \alpha -\alpha _{0}\right\vert
>\varepsilon }\left[ U_{J_{L}}(\alpha ,\alpha _{0})-T_{J_{L}}(\alpha ,\alpha
_{0})\right] \leq 0\right) \text{ .}
\end{eqnarray*}%
The previous probability is bounded by, for any $\delta >0$%
\begin{equation*}
\Pr \left( \inf_{\left\vert \alpha -\alpha _{0}\right\vert >\varepsilon
}U_{J_{L}}\left( \alpha ,\alpha _{0}\right) \leq \delta \right) +\Pr \left(
\sup_{\left\vert \alpha -\alpha _{0}\right\vert >\varepsilon
}T_{J_{L}}\left( a,\alpha _{0}\right) >0\right) \text{ ; }
\end{equation*}%
for $\alpha _{0}-\alpha <2,$ it is sufficient to note that%
\begin{equation*}
\lim_{L\rightarrow \infty }\Pr \left( \sup_{\left\vert \alpha -\alpha
_{0}\right\vert >\varepsilon }T_{J_{L}}\left( a,\alpha _{0}\right) >0\right)
=0
\end{equation*}%
from Lemma \ref{consistency2}, while from Lemma \ref{consistency1} there
exists $\delta _{\varepsilon }=\frac{B^{2}}{B^{2+\varepsilon }-1}+\frac{%
B^{2}\varepsilon }{B^{2}-1}\log B>0$ such that
\begin{equation*}
\lim_{L\rightarrow \infty }\Pr \left( \inf_{\left\vert \alpha -\alpha
_{0}\right\vert >\varepsilon }U_{J_{L}}\left( \alpha ,\alpha _{0}\right)
\leq \delta _{\varepsilon }\right) =0\text{ .}
\end{equation*}%
For $\alpha _{0}-\alpha =2$ or $\alpha _{0}-\alpha >2$ the same result is
obtained by dividing $\Delta \emph{R}_{J_{L}}(\alpha ,\alpha _{0})$ by,
respectively $\log \log B^{J_{L}}$ or $\log B^{J_{L}}$ and then resorting
again to Lemmas \ref{consistency1}, \ref{consistency2}. Thus $\widehat{%
\alpha }_{J_{L}}\rightarrow _{p}\alpha _{0}$ is established.

Now note that%
\begin{eqnarray*}
\left\vert \widehat{G}(\widehat{\alpha }_{J_{L}})-G_{0}\right\vert &=&\frac{1%
}{\sum_{j}N_{j}}\sum_{j=1}^{J_{L}}\frac{\sum_{k}\beta _{jk}^{2}}{K_{j}\left(
\widehat{\alpha }_{J_{L}}\right) }-\frac{1}{\sum_{j}N_{j}}\sum_{j=1}^{J_{L}}%
\frac{G_{0}K_{j}\left( \alpha _{0}\right) }{K_{j}\left( \alpha _{0}\right) }
\\
&=&\frac{1}{\sum_{j}N_{j}}\sum_{j=1}^{J_{L}}G_{0}\frac{K_{j}\left( \alpha
_{0}\right) }{K_{j}\left( \widehat{\alpha }_{J_{L}}\right) }\left( \frac{%
\sum_{k}\beta _{jk}^{2}}{G_{0}K_{j}\left( \alpha _{0}\right) }-\frac{%
K_{j}\left( \widehat{\alpha }_{J_{L}}\right) }{K_{j}\left( \alpha
_{0}\right) }\right) \text{ .}
\end{eqnarray*}%
By adding and subtracting $I\left( B,\alpha _{0},\widehat{\alpha }%
_{J_{L}}\right) -1$, where $I\left( B,\alpha _{0},\widehat{\alpha }%
_{J_{L}}\right) $ is defined as (\ref{I_def}) in Proposition \ref{propKj},
we obtain:%
\begin{eqnarray*}
\left\vert \widehat{G}(\widehat{\alpha }_{J_{L}})-G_{0}\right\vert &\leq
&\left\vert \frac{1}{\sum_{j}N_{j}}\sum_{j=1}^{J_{L}}G_{0}\frac{K_{j}\left(
\alpha _{0}\right) }{K_{j}\left( \widehat{\alpha }_{J_{L}}\right) }\left(
\frac{\sum_{k}\beta _{jk}^{2}}{G_{0}K_{j}\left( \alpha _{0}\right) }%
-1\right) \right\vert \\
&&+\left\vert \frac{1}{\sum_{j}N_{j}}\sum_{j=1}^{J_{L}}G_{0}\left( \frac{%
K_{j}\left( \alpha _{0}\right) }{K_{j}\left( \widehat{\alpha }%
_{J_{L}}\right) }-I\left( B,\alpha _{0},\widehat{\alpha }_{J_{L}}\right)
\right) \right\vert \\
&&+\left\vert \frac{1}{\sum_{j}N_{j}}\sum_{j=1}^{J_{L}}G_{0}\left( I\left(
B,\alpha _{0},\widehat{\alpha }_{J_{L}}\right) -1\right) \right\vert \\
&=&\left\vert G_{A}\right\vert +\left\vert G_{B}\right\vert +\left\vert
G_{C}\right\vert \text{ .}
\end{eqnarray*}

By Proposition \ref{propKj} we have that
\begin{equation*}
\frac{K_{j}\left( \alpha _{0}\right) }{K_{j}\left( \widehat{\alpha }%
_{J_{L}}\right) }=B^{j\left( \widehat{\alpha }_{J_{L}}-\alpha _{0}\right)
}I\left( B,\alpha _{0},\widehat{\alpha }_{J_{L}}\right) +o_{J_{L}}\left(
1\right) \text{ .}
\end{equation*}

Clearly
\begin{equation*}
\Pr \left\{ \left\vert G_{A}\right\vert \geq \frac{\varepsilon }{3}\right\}
\leq \Pr \left\{ \left[ \left\vert G_{A}\right\vert \geq \frac{\varepsilon }{%
3}\right] \cap \left[ \left\vert \alpha _{0}-\widehat{\alpha }%
_{J_{L}}\right\vert <\frac{1}{3}\right] \right\} +\Pr \left\{ \left\vert
\alpha _{0}-\widehat{\alpha }_{J_{L}}\right\vert \geq \frac{1}{3}\right\}
\end{equation*}

\begin{equation*}
\leq \Pr \left\{ \left[ \frac{G_{0}}{\sum_{j}N_{j}}\sum_{j}I\left( B,\alpha
_{0},\widehat{\alpha }_{J_{L}}\right) B^{j\left( \widehat{\alpha }%
_{J_{L}}-\alpha _{0}\right) }\left\vert \frac{\sum_{k}\beta _{jk}^{2}}{%
G_{0}K_{j}\left( \alpha _{0}\right) }-1\right\vert \geq \varepsilon \right]
\right\} +o_{J_{L}}(1)
\end{equation*}%
\begin{eqnarray*}
&\leq &\frac{1}{\varepsilon }\frac{G_{0}}{\sum_{j}N_{j}}\sum_{j}\sqrt{N_{j}}%
I\left( B,\alpha _{0},\widehat{\alpha }_{J_{L}}\right) B^{j\left( \widehat{%
\alpha }_{J_{L}}-\alpha _{0}\right) }\mathbb{E}\left\vert \frac{1}{\sqrt{%
N_{j}}}\frac{\sum_{k}\beta _{jk}^{2}}{G_{0}K_{j}\left( \alpha _{0}\right) }%
-1\right\vert +o_{J_{L}}(1) \\
&\leq &\frac{1}{\varepsilon }\frac{G_{0}}{\sum_{j}N_{j}}\sum_{j}\sqrt{N_{j}}%
I\left( B,\alpha _{0},\widehat{\alpha }_{J_{L}}\right) B^{j\left( \widehat{%
\alpha }_{J_{L}}-\alpha _{0}\right) }\left[ \mathbb{E}\left\vert \frac{1}{%
\sqrt{N_{j}}}\frac{\sum_{k}\beta _{jk}^{2}}{G_{0}K_{j}\left( \alpha
_{0}\right) }-1\right\vert ^{2}\right] ^{1/2}+o_{J_{L}}(1) \\
&=&\frac{C}{\varepsilon }\frac{G_{0}I\left( B,\alpha _{0},\widehat{\alpha }%
_{J_{L}}\right) }{B^{2J_{L}}}B^{\left( \left( \widehat{\alpha }%
_{J_{L}}-\alpha _{0}\right) +1\right) J_{L}}+o_{J_{L}}(1) \\
&=&\frac{CI\left( B,\alpha _{0},\widehat{\alpha }_{J_{L}}\right) }{%
\varepsilon }G_{0}B^{\left( \left( \widehat{\alpha }_{J_{L}}-\alpha
_{0}\right) -1\right) J_{L}}+o_{J_{L}}(1)=o_{J_{L}}(1)\text{ ,}
\end{eqnarray*}%
in view of the consistency of $\widehat{\alpha }_{J_{L}}$. As far as $G_{B}$
is concerned, we obtain, for a sufficiently small $\delta >0$:

\begin{eqnarray*}
\Pr \left\{ \left\vert G_{B}\right\vert \geq \frac{\varepsilon }{3}\right\}
&=&\Pr \left\{ \left[ \left\vert G_{B}\right\vert \geq \frac{\varepsilon }{3}%
\right] \cap \left[ \log B^{j}\left\vert \alpha _{0}-\widehat{\alpha }%
_{J_{L}}\right\vert <\delta \right] \right\} \\
&&+\Pr \left\{ \left[ \left\vert G_{B}\right\vert \geq \frac{\varepsilon }{3}%
\right] \cap \left[ \log B^{j}\left\vert \alpha _{0}-\widehat{\alpha }%
_{J_{L}}\right\vert \geq \delta \right] \right\} \\
&=&\Pr \left\{ \left[ \left\vert G_{B}\right\vert \geq \frac{\varepsilon }{3}%
\right] \cap \left[ \log B^{j}\left\vert \alpha _{0}-\widehat{\alpha }%
_{J_{L}}\right\vert <\delta \right] \right\} +o_{J_{L}}\left( 1\right) \text{
.}
\end{eqnarray*}%
Because for $0\leq x\leq 1$, we have $\left\vert e^{-x}-1\right\vert \leq x$%
, we have:%
\begin{eqnarray*}
\left\vert B^{-j\left( \alpha _{0}-\widehat{\alpha }_{J_{L}}\right)
}-1\right\vert &=&\left\vert \exp \left( -j\left( \alpha _{0}-\widehat{%
\alpha }_{J_{L}}\right) \log B\right) -1\right\vert \\
&\leq &\left( \alpha _{0}-\widehat{\alpha }_{J_{L}}\right) \log B^{j}\text{ ,%
}
\end{eqnarray*}%
and hence, in view of
\begin{equation*}
\Pr \left\{ \left[ \left\vert G_{B}\right\vert \geq \frac{\varepsilon }{3}%
\right] \cap \left[ \log B^{j}\left\vert \alpha _{0}-\widehat{\alpha }%
_{J_{L}}\right\vert <\delta \right] \right\}
\end{equation*}%
\begin{eqnarray*}
&\leq &\Pr \left\{ \left[ \frac{1}{\sum_{j}N_{j}}\sum_{j=1}^{J_{L}}G_{0}%
\left\vert \frac{K_{j}\left( \alpha _{0}\right) }{K_{j}\left( \widehat{%
\alpha }_{J_{L}}\right) }-1\right\vert \geq \frac{\varepsilon }{3}\right]
\cap \left[ \log B^{j}\left\vert \alpha _{0}-\widehat{\alpha }%
_{J_{L}}\right\vert <\delta \right] \right\} \\
&\leq &\frac{C}{\varepsilon }\frac{I\left( B,\alpha _{0},\widehat{\alpha }%
_{J_{L}}\right) }{\sum_{j}N_{j}}\sum_{j=1}^{J_{L}}G_{0}\log B^{j}\mathbb{E}%
\left\vert \alpha _{0}-\widehat{\alpha }_{J_{L}}\right\vert =\frac{C}{%
\varepsilon }\frac{\delta J_{L}}{\sum_{j}N_{j}}=o_{J_{L}}(1)\text{ .}
\end{eqnarray*}%
Finally, in view of (\ref{I-ratio}),%
\begin{equation*}
\left\vert \frac{1}{\sum_{j}N_{j}}\sum_{j=1}^{J_{L}}G_{0}\left( I\left(
B,\alpha _{0},\widehat{\alpha }_{J_{L}}\right) -1\right) \right\vert \leq
\frac{J_{L}C_{I}\left\vert \alpha _{0}-\widehat{\alpha }_{J_{L}}\right\vert
}{\sum_{j}N_{j}}\text{ .}
\end{equation*}%
Hence, because for a sufficiently small $\delta >0$:%
\begin{eqnarray*}
\Pr \left\{ \left[ \left\vert G_{C}\right\vert \geq \frac{\varepsilon }{3}%
\right] \right\} &\leq &\Pr \left\{ \left[ \left\vert G_{C}\right\vert \geq
\frac{\varepsilon }{3}\right] \cap \left[ \left\vert \alpha _{0}-\widehat{%
\alpha }_{J_{L}}\right\vert <\delta \right] \right\} +\Pr \left\{ \left\vert
\alpha _{0}-\widehat{\alpha }_{J_{L}}\right\vert \geq \delta \right\} \\
&=&\Pr \left\{ \left[ \left\vert G_{C}\right\vert \geq \frac{\varepsilon }{3}%
\right] \cap \left[ \left\vert \alpha _{0}-\widehat{\alpha }%
_{J_{L}}\right\vert <\delta \right] \right\} +o_{J_{L}}\left( 1\right)
\end{eqnarray*}%
\begin{equation*}
\Pr \left\{ \left[ \left\vert G_{C}\right\vert \geq \frac{\varepsilon }{3}%
\right] \cap \left[ \left\vert \alpha _{0}-\widehat{\alpha }%
_{J_{L}}\right\vert <\delta \right] \right\} \leq \frac{J_{L}C_{I}\delta }{%
\sum_{j}N_{j}}=o_{J_{L}}\left( 1\right)
\end{equation*}

as claimed.
\end{proof}

Here we present the auxiliary results we shall need on $\widehat{G}$ and its
derivatives. We introduce%
\begin{eqnarray}
\widehat{G}_{0}\left( \alpha \right) &:&=\widehat{G}\left( \alpha \right) =%
\frac{1}{\sum_{j=1}^{J_{L}}N_{j}}\sum_{j=1}^{J_{L}}\frac{\sum_{k}\beta
_{jk}^{2}}{K_{j}\left( \alpha \right) }U_{0,j}\left( \alpha \right) \text{;}
\notag \\
\widehat{G}_{1}\left( \alpha \right) &:&=\frac{d}{d\alpha }\widehat{G}\left(
\alpha \right) =\frac{1}{\sum_{j=1}^{J_{L}}N_{j}}\sum_{j=1}^{J_{L}}\frac{%
\sum_{k}\beta _{jk}^{2}}{K_{j}\left( \alpha \right) }U_{1,j}\left( \alpha
\right) \text{;}  \label{defGest} \\
\widehat{G}_{2}\left( \alpha \right) &:&=\frac{d^{2}}{d\alpha ^{2}}\widehat{G%
}\left( \alpha \right) =\frac{1}{\sum_{j=1}^{J_{L}}N_{j}}\sum_{j=1}^{J_{L}}%
\frac{\sum_{k}\beta _{jk}^{2}}{K_{j}\left( \alpha \right) }U_{2,j}\left(
\alpha \right) \text{,}  \notag
\end{eqnarray}%
where:%
\begin{equation*}
U_{0,j}\left( \alpha \right) =1\text{ , }U_{1,j}\left( \alpha \right) =-%
\frac{K_{j,1}\left( \alpha \right) }{K_{j}\left( \alpha \right) }\text{ , }%
U_{2,j}\left( \alpha \right) =2\left( \frac{K_{j,1}\left( \alpha \right) }{%
K_{j}\left( \alpha \right) }\right) ^{2}-\frac{K_{j,2}\left( \alpha \right)
}{K_{j}\left( \alpha \right) }\text{ .}
\end{equation*}%
Also, let:%
\begin{eqnarray}
G_{0}\left( \alpha \right) &:&=G\left( \alpha \right) =\frac{1}{%
\sum_{j=1}^{J_{L}}N_{j}}\sum_{j=1}^{J_{L}}N_{j}\frac{G_{0}K_{j}\left( \alpha
_{0}\right) }{K_{j}\left( \alpha \right) }U_{0,j}\left( \alpha \right) \text{%
;}  \notag \\
G_{1}\left( \alpha \right) &:&=\frac{d}{d\alpha }G\left( \alpha \right) =%
\frac{1}{\sum_{j=1}^{J_{L}}N_{j}}\sum_{j=1}^{J_{L}}N_{j}\frac{%
G_{0}K_{j}\left( \alpha _{0}\right) }{K_{j}\left( \alpha \right) }%
U_{1,j}\left( \alpha \right) \text{;}  \label{defGfun} \\
G_{2}\left( \alpha \right) &:&=\frac{d^{2}}{d\alpha ^{2}}G\left( \alpha
\right) =\frac{1}{\sum_{j=1}^{J_{L}}N_{j}}\sum_{j=1}^{J_{L}}N_{j}\frac{%
G_{0}K_{j}\left( \alpha _{0}\right) }{K_{j}\left( \alpha \right) }%
U_{2,j}\left( \alpha \right) \text{.}  \notag
\end{eqnarray}
The first result concerns the behaviour of expected value and variance of
the estimator $\widehat{G}\left( \alpha _{0}\right) $ computed in $\alpha
_{0}$, the second regards the uniform convergence in probability of the
ratio between $\widehat{G}$ and $G$, and their $k$-th order derivatives $%
\widehat{G}_{k}$, $G_{k}$.

\begin{lemma}
\label{lemmaG}Let $\widehat{G}\left( \alpha \right) $ be as in (\ref{G_est}%
). Under Condition \ref{Cl-reg}, we have%
\begin{eqnarray*}
\mathbb{E}\left( \widehat{G}\left( \alpha _{0}\right) \right) &=&G_{0}\text{
;} \\
\lim B^{2J_{L}}Var\left( \widehat{G}\left( \alpha _{0}\right) \right)
&=&G_{0}^{2}\rho ^{2}(\alpha _{0},B)\frac{B^{2}-1}{B^{2}}\text{ ,}
\end{eqnarray*}%
where $I_{0}\left( B\right) $ is defined by (\ref{I0_def}) in Proposition %
\ref{propKj} and%
\begin{equation*}
\rho ^{2}\left( \alpha _{0};B\right) =\frac{\sigma ^{2}\left( \alpha
_{0};B\right) +B^{-\alpha _{0}}\tau ^{2}\left( \alpha _{0};B\right) }{%
I_{0}^{2}\left( B\right) }\text{ , }\tau ^{2}\left( \alpha _{0};B\right)
:=\tau _{+}^{2}\left( \alpha _{0};B\right) +\tau _{-}^{2}\left( \alpha
_{0};B\right) \text{ .}
\end{equation*}
\end{lemma}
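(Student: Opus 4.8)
The plan is to dispatch the unbiasedness first, since it is essentially a bookkeeping identity, and then to read off the variance asymptotics from Lemma \ref{gavarini} and Proposition \ref{propKj}. For the first claim I would start from (\ref{G_est}) and use the exact identity $\sum_k\beta_{jk}^2=\sum_{B^{j-1}<l<B^{j+1}}b^2(l/B^j)(2l+1)\widehat{C}_l$ recorded before Lemma \ref{gavarini}, together with $\mathbb{E}\widehat{C}_l=C_l$ from (\ref{powest2}); this gives $\mathbb{E}(\sum_k\beta_{jk}^2)=\sum_l b^2(l/B^j)(2l+1)C_l$, which by Definition \ref{Kfunctions} equals $G_0\,N_jK_j(\alpha_0)$ in the pure power-law case $G\equiv G_0$ and $G_0\,N_jK_j(\alpha_0)(1+o(1))$ in general under (\ref{Cl-reg}). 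Dividing by $K_j(\alpha_0)$, summing over $j$ and normalizing by $\sum_j N_j$ collapses the right-hand side to $G_0$, giving $\mathbb{E}\widehat{G}(\alpha_0)=G_0$.

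For the variance, since the $K_j(\alpha_0)$ are deterministic I would write
\[
Var\bigl(\widehat{G}(\alpha_0)\bigr)=\Bigl(\sum_{j=1}^{J_L}N_j\Bigr)^{-2}\sum_{j,j'=1}^{J_L}\frac{Cov\bigl(\sum_k\beta_{jk}^2,\sum_k\beta_{j'k}^2\bigr)}{K_j(\alpha_0)\,K_{j'}(\alpha_0)}\,,
\]
and invoke Lemma \ref{gavarini}: the covariances are asymptotically zero once $|j-j'|\ge2$, so at leading order only the diagonal ($j'=j$) and the two nearest off-diagonals ($j'=j\pm1$) contribute, with $Var(\sum_k\beta_{jk}^2)=G_0^2\sigma^2 B^{2(1-\alpha_0)j}(1+o(1))$ and $Cov(\sum_k\beta_{jk}^2,\sum_k\beta_{j+1,k}^2)=G_0^2\tau_+^2 B^{2(1-\alpha_0)j}(1+o(1))$, and the $\tau_-^2$-version for $j'=j-1$. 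Feeding in the asymptotics $K_j(\alpha_0)=c_B^{-1}I_0(B)B^{-\alpha_0 j}(1+o(1))$ from Proposition \ref{propKj} — so that $K_j(\alpha_0)^{-2}$ supplies a factor $B^{2\alpha_0 j}$ — each surviving summand turns out to be of exact order $B^{2j}$ (up to a $j$-independent power of $B$ on the off-diagonals), hence the numerator is a geometric sum with ratio $B^2>1$, dominated by $j=J_L$; because the weights grow geometrically, the finitely many low-$j$ terms where the $o(1)$'s are not yet effective are asymptotically negligible. Evaluating $\sum_{j=1}^{J_L}B^{2j}=B^2(B^{2J_L}-1)/(B^2-1)$ and $\sum_j N_j=c_B B^2(B^{2J_L}-1)/(B^2-1)$, so that $(\sum_j N_j)^2=c_B^2 B^4(B^{2J_L}-1)^2/(B^2-1)^2$, and then multiplying by $B^{2J_L}$, all powers of $B^{J_L}$ and the constant $c_B$ cancel and one is left with $G_0^2\rho^2(\alpha_0,B)(B^2-1)/B^2$, where $\rho^2$ assembles $\sigma^2$, $\tau_+^2$, $\tau_-^2$ and $I_0(B)$ precisely as in the statement.

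The step I expect to be the main obstacle is the exponent bookkeeping here: one has to track simultaneously the $(1-\alpha_0)$-scaling of the fluctuations of $\sum_k\beta_{jk}^2$ (Lemma \ref{gavarini}) against the $(-\alpha_0)$-scaling of $K_j(\alpha_0)$ (Proposition \ref{propKj}) and check that their product is exactly $B^{2j}$ per term — which is what makes the factor $(B^2-1)/B^2$ appear — and one must be careful about the off-by-one between the diagonal range $1\le j\le J_L$ and the off-diagonal range $1\le j\le J_L-1$, since this shifts the coefficient of the $\tau$-terms in $\rho^2$ by a power of $B$. A secondary, routine matter is the uniform control of the $o(1)$ remainders, so that replacing the exact covariances and $K_j$'s by their asymptotic forms (valid only as $j\to\infty$) perturbs the $B^{2J_L}$-normalized limit by $o(1)$.
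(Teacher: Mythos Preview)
Your proposal is correct and follows essentially the same route as the paper: the expectation is handled via (\ref{expbeta2}) and Definition \ref{Kfunctions}, while the variance is expanded as a double sum, truncated to $|j-j'|\le 1$ using Lemma \ref{gavarini}, and the remaining geometric sums are evaluated with the $K_j$-asymptotics of Proposition \ref{propKj}; the paper's own proof is terser and simply points to these ingredients together with the analogous computation carried out in the proof of Lemma \ref{lemmaS0}. Your remark that the expectation is exactly $G_0$ only in the pure power-law case and $G_0(1+o(1))$ under (\ref{Cl-reg}) is in fact more careful than the paper, whose proof also arrives at $G_0+o_{J_L}(1)$ despite the exact equality in the statement.
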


\begin{proof}
By (\ref{expbeta2}), we obtain that
\begin{eqnarray*}
\mathbb{E}\left( \widehat{G}\left( \alpha \right) \right) &=&\frac{1}{%
\sum_{j=1}^{J_{L}}N_{j}}\sum_{j=1}^{J_{L}}\frac{\mathbb{E}\left(
\sum_{k=1}^{N_{j}}\beta _{jk}^{2}\right) }{K_{j}\left( \alpha \right) } \\
&=&\frac{1}{\sum_{j=1}^{J_{L}}N_{j}}G_{0}\sum_{j=1}^{J_{L}}\frac{%
\sum_{l}b^{2}\left( \frac{l}{B^{j}}\right) \frac{2l+1}{4\pi }l^{-\alpha
_{0}}\left( 1+O\left( l^{-1}\right) \right) }{K_{j}\left( \alpha _{0}\right)
} \\
&=&G_{0}+o_{J_{L}}\left( 1\right) \text{ ,}
\end{eqnarray*}%
while from Lemma \ref{gavarini} and Proposition \ref{propKj} (see also the
proof of Lemma \ref{lemmaS0}), we have

\begin{equation*}
Var\left( \widehat{G}\left( \alpha _{0}\right) \right) =G_{0}^{2}\rho
^{2}(\alpha _{0},B)\frac{B^{2}-1}{B^{2}}B^{-2J_{L}}+o\left(
B^{-2J_{L}}\right) \text{ ,}
\end{equation*}%
as claimed.
\end{proof}

\begin{lemma}
\label{lemmagiovneed} Under Condition \ref{REGULNEED} we have for $n=0,1,2$:%
\begin{equation*}
\sup \left| \frac{\widehat{G}_{n}\left( \alpha \right) }{G_{n}\left( \alpha
\right) }-1\right| \longrightarrow _{p}0\text{ .}
\end{equation*}
\end{lemma}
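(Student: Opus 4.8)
The plan is to prove the uniform convergence in probability separately for each $n=0,1,2$, using the same two-part strategy: first show that the ratio of \emph{expectations} $\mathbb{E}(\widehat{G}_n(\alpha))/G_n(\alpha)$ converges to $1$ uniformly in $\alpha\in A$, and then control the stochastic fluctuation $\widehat{G}_n(\alpha)-\mathbb{E}(\widehat{G}_n(\alpha))$ uniformly. Recall from (\ref{defGest}) and (\ref{defGfun}) that both $\widehat{G}_n(\alpha)$ and $G_n(\alpha)$ are sums over $j$ of terms $\big(\sum_k\beta_{jk}^2\big)/K_j(\alpha)$ (resp.\ $N_j G_0 K_j(\alpha_0)/K_j(\alpha)$) weighted by the \emph{same} deterministic factor $U_{n,j}(\alpha)$. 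Since by (\ref{expbeta2}) we have $\mathbb{E}(\sum_k\beta_{jk}^2)=N_j G_0 K_j(\alpha_0)(1+O(B^{-j}))$ under Condition \ref{REGULNEED}, the ratio $\mathbb{E}(\widehat{G}_n(\alpha))/G_n(\alpha)$ equals a weighted average of terms $1+O(B^{-j})$, with weights $w_{n,j}(\alpha):=N_j\,\frac{G_0 K_j(\alpha_0)}{K_j(\alpha)}U_{n,j}(\alpha)\big/\sum_{j'}(\cdots)$. The point is that Proposition \ref{propKj} gives $K_j(\alpha)\approx B^{j(2-\alpha)}$ and $K_{j,u}(\alpha)/K_j(\alpha)\approx(\log B^j)^u$ uniformly on $A$, so each weight ratio is bounded uniformly in $j$ and $\alpha$, and the $O(B^{-j})$ corrections sum to something negligible relative to the dominant $j=J_L$ term; hence $\sup_{\alpha\in A}\big|\mathbb{E}(\widehat{G}_n(\alpha))/G_n(\alpha)-1\big|\to 0$.

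For the stochastic part, I would write
\begin{equation*}
\frac{\widehat{G}_n(\alpha)}{G_n(\alpha)}-\frac{\mathbb{E}\widehat{G}_n(\alpha)}{G_n(\alpha)}
=\frac{1}{\sum_{j}N_j G_0\frac{K_j(\alpha_0)}{K_j(\alpha)}U_{n,j}(\alpha)}\sum_{j=1}^{J_L}G_0\frac{K_j(\alpha_0)}{K_j(\alpha)}U_{n,j}(\alpha)\left(\frac{\sum_k\beta_{jk}^2}{G_0K_j(\alpha_0)}-\frac{\mathbb{E}\sum_k\beta_{jk}^2}{G_0K_j(\alpha_0)}\right),
\end{equation*}
so that, by the Cauchy--Schwarz inequality and the variance computation in Lemma \ref{gavarini} (which gives $\mathrm{Var}(\sum_k\beta_{jk}^2)\asymp B^{2(1-\alpha_0)j}$, whereas $\mathbb{E}\sum_k\beta_{jk}^2\asymp B^{(2-\alpha_0)j}$, hence the normalized quantity has $L^2$-norm of order $B^{-j}$), the $L^2$-norm of the $j$-th centered summand is $O(B^{-j})$ uniformly in $\alpha$. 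Because the weights $N_j K_j(\alpha_0)U_{n,j}(\alpha)/K_j(\alpha)$ are (up to the slowly-varying factor $(\log B^j)^n$ coming from $U_{n,j}$) comparable to $N_j\asymp B^{2j}$, and $\sum_j N_j\asymp B^{2J_L}$, the whole expression has $L^2$-norm — pointwise in $\alpha$ — of order $B^{-J_L}(\log B^{J_L})^n\to 0$. Combining with the deterministic part gives pointwise convergence in probability of $\widehat{G}_n(\alpha)/G_n(\alpha)$ to $1$.

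To upgrade pointwise convergence to uniform convergence over the compact set $A=[a_1,a_2]$, I would establish stochastic equicontinuity: the map $\alpha\mapsto\widehat{G}_n(\alpha)/G_n(\alpha)$ is continuously differentiable in $\alpha$ (each $K_j$, $K_{j,u}$ is smooth and bounded away from $0$ on $A$ by Proposition \ref{propKj}), and one bounds $\sup_{\alpha\in A}\big|\frac{d}{d\alpha}[\widehat{G}_n(\alpha)/G_n(\alpha)]\big|$ in probability by the same kind of Cauchy--Schwarz/variance estimate applied to the extra $\log l$ factors that differentiation produces — these only cost further powers of $\log B^{J_L}$, still killed by the $B^{-J_L}$ gain. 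A finite-net argument on $A$ then converts the pointwise statement into the claimed uniform one. The main obstacle is the bookkeeping in the deterministic step: one must check that the ratios $N_j K_j(\alpha_0)U_{n,j}(\alpha)/K_j(\alpha)$, which involve the ratios $K_{j,1}/K_j$ and $K_{j,2}/K_j$ with their $(\log B^j)$ growth, do not destroy the uniform boundedness of the weights nor the domination of the sum by its top scale; this is exactly where Proposition \ref{propKj} (and the bound (\ref{I-ratio}) on $I(B,\alpha_0,\alpha)$) does the work, and care is needed near the boundary case $\alpha_0-\alpha=2$ where the scale-by-scale behaviour of $K_j(\alpha_0)/K_j(\alpha)$ degenerates, handled as in the proof of Theorem \ref{consistenza} by an extra $\log$ normalization.
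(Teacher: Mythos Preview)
Your two-part strategy (bias term plus centered fluctuation, then stochastic equicontinuity for the supremum) is sound in principle, but the paper takes a shorter route by exploiting something you actually set up in your own display and then do not use: the centered stochastic factor $\bigl(\frac{\sum_k\beta_{jk}^2}{G_0K_j(\alpha_0)}-\frac{\mathbb{E}\sum_k\beta_{jk}^2}{G_0K_j(\alpha_0)}\bigr)$ does not depend on $\alpha$ at all. The paper therefore bounds the whole ratio by the product of (a) the purely deterministic weight ratio
\[
\frac{\sum_j\sqrt{N_j}\,\frac{K_j(\alpha_0)}{K_j(\alpha)}\,|U_{n,j}(\alpha)|}{\sum_jN_j\,\frac{K_j(\alpha_0)}{K_j(\alpha)}\,U_{n,j}(\alpha)}\;=\;O(B^{-J_L})\quad\text{uniformly in }\alpha,
\]
by Proposition~\ref{propKj} and Corollary~\ref{Kjcoroll}, and (b) the $\alpha$-free quantity $\sup_{j\leq J_L}\bigl|\frac{1}{\sqrt{N_j}}\sum_k\bigl(\frac{\beta_{jk}^2}{G_0K_j(\alpha_0)}-1\bigr)\bigr|$, which is handled by a union bound over the $J_L$ values of $j$ combined with Chebyshev and the variance bound of Lemma~\ref{gavarini}. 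This makes the supremum over $\alpha$ automatic, so no equicontinuity or $\varepsilon$-net argument is needed; your route would work but carries avoidable overhead (and the derivative you would bound for equicontinuity picks up an extra $\log B^{J_L}$ factor, so the net would have to be refined with $J_L$). Two bookkeeping slips in your sketch worth fixing: Proposition~\ref{propKj} gives $K_j(\alpha)\asymp B^{-\alpha j}$, not $B^{(2-\alpha)j}$; and the weights $N_j\,\frac{K_j(\alpha_0)}{K_j(\alpha)}\,U_{n,j}(\alpha)\asymp B^{(2+\alpha-\alpha_0)j}(\log B^j)^n$ are not ``comparable to $N_j$'' for $\alpha\neq\alpha_0$, so your intermediate claim that the $j$-th summand has $L^2$-norm $O(B^{-j})$ is misstated (the bracket has standard deviation of order $B^j$, being $N_j$ times a term of order $B^{-j}$). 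Your final $B^{-J_L}$ conclusion is nonetheless correct once these exponents are tracked properly; the paper's factorization simply sidesteps the bookkeeping.
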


\begin{proof}
Under Condition \ref{REGULNEED2}, we observe that:

\begin{equation*}
\frac{\widehat{G}_{n}\left( \alpha \right) }{G_{n}\left( \alpha \right) }-1=%
\frac{\sum_{j=1}^{J_{L}}\frac{\sum_{k}\beta _{jk}^{2}}{K_{j}\left( \alpha
\right) }U_{j,n}\left( \alpha \right) }{\sum_{j=1}^{J_{L}}N_{j}\frac{%
G_{0}K_{j}\left( \alpha _{0}\right) }{K_{j}\left( \alpha \right) }%
U_{j,n}\left( \alpha \right) }-1
\end{equation*}%
\begin{equation*}
=\frac{\sum_{j=1}^{J_{L}}\sqrt{N_{j}}\frac{K_{j}\left( \alpha _{0}\right) }{%
K_{j}\left( \alpha \right) }U_{j,n}\left( \alpha \right) \left[ \left( \frac{%
1}{\sqrt{N_{j}}}\sum_{k}\left( \frac{\beta _{jk}^{2}}{G_{0}K_{j}\left(
\alpha _{0}\right) }-1\right) \right) \right] }{\sum_{j=1}^{J_{L}}N_{j}\frac{%
K_{j}\left( \alpha _{0}\right) }{K_{j}\left( \alpha \right) }U_{j,n}\left(
\alpha \right) }\text{ .}
\end{equation*}

Then we have:%
\begin{equation*}
\mathbb{P}\left\{ \left\vert \frac{\sum_{j=1}^{J_{L}}\sqrt{N_{j}}\frac{%
K_{j}\left( \alpha _{0}\right) }{K_{j}\left( \alpha \right) }U_{j,n}\left(
\alpha \right) \left[ \left( \frac{1}{\sqrt{N_{j}}}\sum_{k}\left( \frac{%
\beta _{jk}^{2}}{G_{0}K_{j}\left( \alpha _{0}\right) }-1\right) \right) %
\right] }{\sum_{j=1}^{J_{L}}N_{j}\frac{K_{j}\left( \alpha _{0}\right) }{%
K_{j}\left( \alpha \right) }U_{j,n}\left( \alpha \right) }\right\vert
>\delta _{\varepsilon }\right\}
\end{equation*}%
\begin{equation*}
\leq \mathbb{P}\left( J_{L}^{2}\left\vert \frac{\sum_{j}\sqrt{N_{j}}\frac{%
K_{j}\left( \alpha _{0}\right) }{K_{j}\left( \alpha \right) }U_{j,n}\left(
\alpha \right) }{\sum_{j}N_{j}\frac{K_{j}\left( \alpha _{0}\right) }{%
K_{j}\left( \alpha \right) }U_{j,n}\left( \alpha \right) }\right\vert \frac{%
\sup_{j}\left\vert \frac{1}{\sqrt{N_{j}}}\sum_{k}\left( \frac{\beta _{jk}^{2}%
}{G_{0}K_{j}\left( \alpha _{0}\right) }-1\right) \right\vert }{J_{L}^{2}}%
>\delta _{\varepsilon }\right) \text{ .}
\end{equation*}

In view of Proposition \ref{propKj} and equations (\ref{K1lim}) and (\ref%
{K2lim}) in Corollary \ref{Kjcoroll}, described in the Appendix, we have
\begin{eqnarray*}
U_{j,1}\left( \alpha \right) &=&\left( -\frac{K_{j,1}\left( \alpha \right) }{%
K_{j}\left( \alpha \right) }\right) =\log B^{j}+o_{j}\left( 1\right) \\
U_{j,2}\left( \alpha \right) &=&\left( 2\frac{\left( K_{j,1}\left( \alpha
\right) \right) ^{2}}{\left( K_{j}\left( \alpha \right) \right) ^{2}}-\frac{%
K_{j,2}\left( \alpha \right) }{K_{j}\left( \alpha \right) }\right) =\left(
\log B^{j}\right) ^{2}+o_{j^{2}}\left( 1\right) \text{.}
\end{eqnarray*}

Then,
\begin{equation*}
\frac{\sum_{j=1}^{J_{L}}\sqrt{N_{j}}\frac{K_{j}\left( \alpha _{0}\right) }{%
K_{j}\left( \alpha \right) }U_{j,n}\left( \alpha \right) }{%
\sum_{j=1}^{J_{L}}N_{j}\frac{K_{j}\left( \alpha _{0}\right) }{K_{j}\left(
\alpha \right) }U_{j,n}\left( \alpha \right) }=\frac{%
\sum_{j=1}^{J_{L}}B^{j}B^{\left( \alpha -\alpha _{0}\right) j}j^{n}}{%
\sum_{j=1}^{J_{L}}B^{2j}B^{\left( \alpha -\alpha _{0}\right) j}j^{n}}%
=O(B^{-J_{L}})\,\text{,}
\end{equation*}%
so that
\begin{equation*}
\sup_{L}\left\vert J_{L}^{2}\frac{\sum_{j=1}^{J_{L}}\sqrt{N_{j}}\frac{%
K_{j}\left( \alpha _{0}\right) }{K_{j}\left( \alpha \right) }U_{j,n}\left(
\alpha \right) }{\sum_{j=1}^{J_{L}}N_{j}\frac{K_{j}\left( \alpha _{0}\right)
}{K_{j}\left( \alpha \right) }U_{j,n}\left( \alpha \right) }\right\vert
<+\infty \text{ .}
\end{equation*}

Also, by Markov inequality and Lemma \ref{gavarini} , we have that, for all $%
j$:
\begin{eqnarray*}
\mathbb{P}\left( \left\vert \frac{1}{\sqrt{N_{j}}}\sum_{k}\frac{\beta
_{jk}^{2}}{G_{0}K_{j}\left( \alpha _{0}\right) }-1\right\vert
>CJ_{L}^{2}\right) &\leq &\frac{1}{CJ_{L}^{2}}Var\left( \frac{1}{\sqrt{N_{j}}%
}\sum_{k}\frac{\beta _{jk}^{2}}{G_{0}K_{j}\left( \alpha _{0}\right) }%
-1\right) \\
&=&\frac{1}{CJ_{L}^{2}}\frac{1}{G_{0}^{2}N_{j}}Var\left( \sum_{k}\frac{\beta
_{jk}^{2}}{K_{j}\left( \alpha _{0}\right) }\right) \\
&=&\frac{1}{CJ_{L}^{2}}\rho ^{2}(\alpha _{0},B)=O\left( J_{L}^{-2}\right)
\text{ ,}
\end{eqnarray*}%
whence%
\begin{eqnarray*}
&&\mathbb{P}\left\{ \sup_{j=1,...,J_{L}}\left\vert \frac{1}{\sqrt{N_{j}}}%
\sum_{k}\frac{\beta _{jk}^{2}}{G_{0}K_{j}\left( \alpha _{0}\right) }%
-1\right\vert >CJ_{L}\right\} \\
&\leq &J_{L}\sup_{j=1,...,J_{L}}\mathbb{P}\left\{ \frac{1}{\sqrt{N_{j}}}%
\sum_{k}\left\vert \frac{\beta _{jk}^{2}}{G_{0}K_{j}\left( \alpha
_{0}\right) }-1\right\vert >CJ_{L}\right\} \\
&\leq &J_{L}\times O\left( J_{L}^{-2}\right) \text{ }%
=O(J_{L}^{-1})=o_{J_{L}}(1)\text{ .}
\end{eqnarray*}%
Under Condition \ref{REGULNEED} we have:%
\begin{eqnarray*}
\frac{\widehat{G}_{n}\left( \alpha \right) }{G_{n}\left( \alpha \right) }-1
&=&\frac{\sum_{j=1}^{J_{L}}\sqrt{N_{j}}\frac{K_{j}\left( \alpha _{0}\right)
}{K_{j}\left( \alpha \right) }U_{j,n}\left( \alpha \right) \left[ \left(
\frac{1}{\sqrt{N_{j}}}\sum_{k}\left( \frac{\beta _{jk}^{2}}{G_{0}K_{j}\left(
\alpha _{0}\right) }-\frac{\mathbb{E}\left[ \sum \beta _{jk}^{2}\right] }{%
N_{j}G_{0}K_{j}\left( \alpha _{0}\right) }\right) \right) \right] }{%
\sum_{j=1}^{J_{L}}N_{j}\frac{K_{j}\left( \alpha _{0}\right) }{K_{j}\left(
\alpha \right) }U_{j,n}\left( \alpha \right) } \\
&&+\frac{\sum_{j=1}^{J_{L}}N_{j}\frac{K_{j}\left( \alpha _{0}\right) }{%
K_{j}\left( \alpha \right) }U_{j,n}\left( \alpha \right) \left( \frac{%
\mathbb{E}\left[ \sum_{k}\beta _{jk}^{2}\right] }{N_{j}G_{0}K_{j}\left(
\alpha _{0}\right) }-1\right) }{\sum_{j=1}^{J_{L}}N_{j}\frac{K_{j}\left(
\alpha _{0}\right) }{K_{j}\left( \alpha \right) }U_{j,n}\left( \alpha
\right) }\text{ .}
\end{eqnarray*}%
From (\ref{expvaluebetabias}), it is easy to see that the second
term in the last equation is $O\left( B^{-J_{L}}\right) $, while
for the first term
we follow the same procedure already described, also considering that, by (%
\ref{expvaluebetabias}):%
\begin{equation*}
\left\vert \frac{1}{\sqrt{N_{j}}}\sum_{k}\left( \frac{\beta _{jk}^{2}}{%
G_{0}K_{j}\left( \alpha _{0}\right) }-\frac{\mathbb{E}\left[ \beta _{jk}^{2}%
\right] }{G_{0}K_{j}\left( \alpha _{0}\right) }\right) \right\vert
=\left\vert \frac{1}{\sqrt{N_{j}}}\frac{\mathbb{E}\left[ \beta _{jk}^{2}%
\right] }{G_{0}K_{j}\left( \alpha _{0}\right) }\left( \frac{\beta _{jk}^{2}}{%
\mathbb{E}\left[ \beta _{jk}^{2}\right] }-1\right) \right\vert
\end{equation*}%
\begin{equation*}
=\sqrt{N_{j}}\left( 1+O\left( B^{-j}\right) \right) \left\vert \left( \frac{%
\sum_{k}\beta _{jk}^{2}}{\mathbb{E}\left[ \sum_{k}\beta _{jk}^{2}\right] }%
-1\right) \right\vert \text{ .}
\end{equation*}
\end{proof}

We can establish now the asymptotic behaviour of $U_{J_{L}}\left( \alpha
,\alpha _{0}\right) $, for which we have the following

\begin{lemma}
\label{consistency1}For all $\varepsilon <\alpha _{0}-\alpha <2$%
\begin{equation*}
\lim_{J_{L}\rightarrow \infty }U_{J_{L}}\left( \alpha ,\alpha _{0}\right)
\end{equation*}%
\begin{equation*}
=\lim_{J_{L}\rightarrow \infty }\log \frac{1}{\sum_{j}N_{j}}%
\sum_{j=1}^{J_{L}}N_{j}\frac{K_{j}\left( \alpha _{0}\right) }{K_{j}\left(
\alpha \right) }-\frac{1}{\sum_{j=1}^{J_{L}}N_{j}}\sum_{j=1}^{J_{L}}N_{j}%
\log \frac{K_{j}\left( \alpha _{0}\right) }{K_{j}\left( \alpha \right) }
\end{equation*}%
\begin{equation*}
=\log \frac{B^{2}-1}{B^{2+(\alpha -\alpha _{0})}-1}+\frac{B^{2}(\alpha
-\alpha _{0})}{B^{2}-1}\log B>\delta _{\varepsilon }>0\text{ .}
\end{equation*}%
Moreover, if $\alpha -\alpha _{0}\,<-2$, we have
\begin{equation*}
\lim_{J_{L}\rightarrow \infty }\frac{1}{\log B^{J_{L}}}\left\{ \log \frac{1}{%
\sum_{j}N_{j}}\sum_{j=1}^{J_{L}}N_{j}\frac{K_{j}\left( \alpha _{0}\right) }{%
K_{j}\left( \alpha \right) }-\frac{1}{\sum_{j=1}^{J_{L}}N_{j}}%
\sum_{j=1}^{J_{L}}N_{j}\log \frac{K_{j}\left( \alpha _{0}\right) }{%
K_{j}\left( \alpha \right) }\right\}
\end{equation*}%
\begin{equation*}
=\frac{\alpha _{0}-\alpha }{2}-1>0\text{ ,}
\end{equation*}%
and if $\alpha -\alpha _{0}\,-2$%
\begin{equation*}
\lim_{J_{L}\rightarrow \infty }\frac{1}{\log J_{L}}\left\{ \log \frac{1}{%
\sum_{j}N_{j}}\sum_{j=1}^{J_{L}}N_{j}\frac{K_{j}\left( \alpha _{0}\right) }{%
K_{j}\left( \alpha \right) }-\frac{1}{\sum_{j=1}^{J_{L}}N_{j}}%
\sum_{j=1}^{J_{L}}N_{j}\log \frac{K_{j}\left( \alpha _{0}\right) }{%
K_{j}\left( \alpha \right) }\right\} =1\text{ .}
\end{equation*}
\end{lemma}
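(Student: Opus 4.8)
The plan is to reduce the entire statement to the asymptotic analysis of the ratios $K_{j}(\alpha_{0})/K_{j}(\alpha)$ provided by Proposition \ref{propKj} in the Appendix, and then to evaluate two explicit families of geometric-type sums. Recall from Proposition \ref{propKj} that, writing $\beta:=\alpha-\alpha_{0}$, one has $K_{j}(\alpha_{0})/K_{j}(\alpha)=B^{j\beta}I(B,\alpha_{0},\alpha)\left(1+o_{j}(1)\right)$, where $I(B,\alpha_{0},\alpha)=I(B,\beta)$ is a bounded, strictly positive constant depending only on $B$ and $\beta$ (and uniformly bounded, with bounded inverse, for $\beta$ in a compact set). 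Since $N_{j}=c_{B}B^{2j}$ and $\sum_{j=1}^{J_{L}}N_{j}=c_{B}B^{2}\frac{B^{2J_{L}}-1}{B^{2}-1}$, all three displays become questions about the leading-order behaviour of $\sum_{j=1}^{J_{L}}B^{(2+\beta)j}$ and $\sum_{j=1}^{J_{L}}jB^{2j}$ as $J_{L}\to\infty$.

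First I would treat the main case $\varepsilon<\alpha_{0}-\alpha<2$, i.e. $-2<\beta<-\varepsilon<0$, so that $2+\beta\in(0,2)$. Here $\sum_{j}N_{j}\frac{K_{j}(\alpha_{0})}{K_{j}(\alpha)}=c_{B}I(B,\beta)\sum_{j=1}^{J_{L}}B^{(2+\beta)j}(1+o_{j}(1))\sim c_{B}I(B,\beta)\frac{B^{2+\beta}}{B^{2+\beta}-1}B^{(2+\beta)J_{L}}$, while $\sum_{j}N_{j}\sim c_{B}\frac{B^{2}}{B^{2}-1}B^{2J_{L}}$. Dividing, the ratio $\frac{1}{\sum_{j}N_{j}}\sum_{j}N_{j}\frac{K_{j}(\alpha_{0})}{K_{j}(\alpha)}$ behaves like $I(B,\beta)\frac{B^{2+\beta}(B^{2}-1)}{B^{2}(B^{2+\beta}-1)}B^{\beta J_{L}}\to 0$ since $\beta<0$; hence the first ($\log$) term tends to $-\infty$ at rate $\beta J_{L}\log B$ plus a constant, but this is exactly cancelled by the leading part of the second term. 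Indeed, $\frac{1}{\sum_{j}N_{j}}\sum_{j}N_{j}\log\frac{K_{j}(\alpha_{0})}{K_{j}(\alpha)}=\frac{1}{\sum_{j}N_{j}}\sum_{j}N_{j}\left(\beta j\log B+\log I(B,\beta)+o_{j}(1)\right)$, and $\frac{\sum_{j}jB^{2j}}{\sum_{j}B^{2j}}=J_{L}-\frac{B^{2}}{B^{2}-1}+o(1)$, so this term equals $\beta\log B\left(J_{L}-\frac{B^{2}}{B^{2}-1}\right)+\log I(B,\beta)+o(1)$. Subtracting, the $\beta J_{L}\log B$ terms and the $\log I(B,\beta)$ terms cancel, and what survives is the constant $\log\frac{B^{2+\beta}(B^{2}-1)}{B^{2}(B^{2+\beta}-1)}+\beta\log B\cdot\frac{B^{2}}{B^{2}-1}=\log\frac{B^{2}-1}{B^{2+(\alpha-\alpha_{0})}-1}+\frac{B^{2}(\alpha-\alpha_{0})}{B^{2}-1}\log B$, which after noting $\beta=\alpha-\alpha_0$ matches the claimed limit. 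Positivity of this limit (and the lower bound $\delta_{\varepsilon}>0$ uniform over $\varepsilon\le\alpha_0-\alpha<2$) follows from a short convexity argument: the function $\beta\mapsto\log\frac{B^{2}-1}{B^{2+\beta}-1}+\frac{B^{2}\beta}{B^{2}-1}\log B$ vanishes with vanishing derivative at $\beta=0$ and is strictly convex on $(-2,0]$, hence strictly positive for $\beta<0$, and bounded below by its value at $\beta=-\varepsilon$.

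Next I would handle the two boundary/exterior regimes. When $\alpha-\alpha_{0}<-2$, i.e. $\beta<-2$ so $2+\beta<0$: now $\sum_{j}B^{(2+\beta)j}$ converges to a finite constant, so the first term $\log\frac{1}{\sum_{j}N_{j}}\sum_{j}N_{j}\frac{K_{j}(\alpha_{0})}{K_{j}(\alpha)}\sim\log\left(C\cdot B^{-2J_{L}}\right)=-2J_{L}\log B+O(1)$, hence divided by $\log B^{J_{L}}=J_{L}\log B$ it tends to $-2$; the second term $\frac{1}{\sum_{j}N_{j}}\sum_{j}N_{j}\log\frac{K_{j}(\alpha_{0})}{K_{j}(\alpha)}\sim\beta\log B\cdot J_{L}$, so divided by $J_{L}\log B$ it tends to $\beta$; the difference of the normalized quantities is $-2-\beta=\alpha_{0}-\alpha-2$. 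Wait — one divides $U_{J_L}$, i.e. (first)$-$(second), by $\log B^{J_L}$, giving $-2-\beta=(\alpha_0-\alpha)-2$; since $\alpha_0-\alpha>2$ this is positive but the claimed value is $\frac{\alpha_0-\alpha}{2}-1$. The discrepancy is resolved by recalling $\sum_j N_j/N_{J_L}\to B^2/(B^2-1)$ does not change the leading rate, but the $\log$ in the first term is $\log(\text{const}\cdot B^{-2J_L}\cdot(\sum_j N_j)^{-1}\sum_j N_j)$; more carefully, the first term normalized by $\log B^{J_L}$ is $\lim\frac{1}{J_L\log B}\log\frac{\text{finite const}}{c_B B^2(B^{2J_L}-1)/(B^2-1)}=-2$, and the second is $\lim\frac{1}{J_L\log B}\beta\log B\cdot J_L=\beta$; so $U_{J_L}/\log B^{J_L}\to -2-\beta$. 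To reconcile with the stated $\frac{\alpha_0-\alpha}{2}-1$ I would recheck whether the normalization intended is $\log B^{J_L}$ versus $2\log B^{J_L}=\log B^{2J_L}=\log N_{J_L}+O(1)$; with the latter normalization one gets $\frac{-2-\beta}{2}=\frac{\alpha_0-\alpha-2}{2}=\frac{\alpha_0-\alpha}{2}-1$, matching the claim, so the computation is correct modulo identifying the normalizing factor with $\log N_{J_L}\sim 2\log B^{J_L}$. Finally, for the boundary case $\alpha-\alpha_{0}=-2$ (so $2+\beta=0$): now $\sum_{j=1}^{J_{L}}B^{(2+\beta)j}=\sum_{j=1}^{J_L}1=J_{L}$, so the first term is $\log\frac{C J_{L}}{\text{const}\cdot B^{2J_{L}}}=\log J_{L}-2J_{L}\log B+O(1)$, while the second term is still $\sim-2\log B\cdot J_{L}$ with a subleading $O(1)$ (the $jB^{2j}$ sum has no $\log J_L$ correction), so $U_{J_{L}}=\log J_{L}+O(1)$ and dividing by $\log J_{L}$ gives the limit $1$.

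The only genuine obstacle is bookkeeping: keeping careful track of which terms are $o_{j}(1)$ inside the sums versus $o(1)$ after summation, verifying that the error terms $o_{j}(1)$ in $K_{j}(\alpha_0)/K_{j}(\alpha)$ from Proposition \ref{propKj} contribute only $o(1)$ to each display (they do, because $\sum_j N_j o_j(1)/\sum_j N_j\to 0$ by a standard Cesàro/Toeplitz argument, the weights $N_j/\sum_j N_j$ summing to one with the tail weights concentrating), and pinning down the exact normalizing constant $\log N_{J_L}$ versus $\log B^{J_L}$ in the two exterior regimes so the stated limits $\frac{\alpha_0-\alpha}{2}-1$ and $1$ come out with the right numerical factor. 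The convexity/positivity claim for the main-case limit and the uniform lower bound $\delta_{\varepsilon}$ are routine once the closed form is in hand. I would present the main case in full and then indicate the two boundary cases as analogous computations.
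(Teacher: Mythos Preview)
Your approach is essentially identical to the paper's: both reduce to Proposition \ref{propKj}, evaluate the two geometric sums $\sum_{j}B^{(2+\beta)j}$ and $\sum_{j}jB^{2j}$, and verify positivity of the limiting constant via a convexity/monotonicity argument on $l(x)=\log\frac{B^{2}-1}{B^{2+x}-1}+\frac{B^{2}x}{B^{2}-1}\log B$. Your hesitation on the $\alpha-\alpha_{0}<-2$ case is well-founded and correctly resolved: the paper's own proof normalizes by $\log B^{2J_{L}}$ (not $\log B^{J_{L}}$ as written in the statement), which is exactly the factor-of-$2$ adjustment you identified as needed to obtain $\frac{\alpha_{0}-\alpha}{2}-1$.
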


\begin{proof}
By recalling $N_{j}=c_{B}B^{2j}$ (see (\ref{Njdef})) and (\ref{K0ratio}), we
observe that,%
\begin{equation*}
\frac{1}{\sum_{j}N_{j}}\sum_{j=1}^{J_{L}}N_{j}\log \left\{ \frac{K_{j}\left(
\alpha _{0}\right) }{K_{j}\left( \alpha \right) }\right\}
\end{equation*}%
\begin{eqnarray*}
&=&\frac{B^{2}-1}{B^{2J_{L}}B^{2}}\sum_{j=1}^{J_{L}}B^{2j}\log \left\{ \frac{%
B^{\left( 2-\alpha _{0}\right) j}}{B^{\left( 2-\alpha \right) j}}\right\}
+\log \left( I\left( B,\alpha _{0},\alpha \right) \right) +o_{J_{L}}(1) \\
&=&\frac{B^{2}-1}{B^{2J_{L}}B^{2}}(\alpha -\alpha
_{0})\sum_{j=1}^{J_{L}}B^{2j}\log B^{j}+o_{J_{L}}(1)
\end{eqnarray*}%
\begin{equation*}
=\frac{B^{2}-1}{B^{2J_{L}}B^{2}}(\alpha -\alpha _{0})\log B\left\{ B^{2J_{L}}%
\frac{B^{2}}{B^{2}-1}\left[ J_{L}-\frac{1}{B^{2}-1}\right] \right\}
+o_{J_{L}}(1)\text{ ,}
\end{equation*}%
\begin{equation}
=(\alpha -\alpha _{0})\log B\left\{ \left[ J_{L}-\frac{1}{B^{2}-1}\right]
\right\} +o_{J_{L}}(1)\text{ ,}  \label{xiaohong2}
\end{equation}%
using (\ref{Jint0}) in Proposition \ref{propsumB}, described in the
Appendix, with $J_{1}=1$ and $s=2$. Now, for $\alpha -\alpha _{0}>-2$:%
\begin{equation*}
\log \left\{ \frac{1}{\sum_{j}N_{j}}\sum_{j=1}^{J_{L}}N_{j}\frac{K_{j}\left(
\alpha _{0}\right) }{K_{j}\left( \alpha \right) }\right\}
\end{equation*}%
\begin{eqnarray*}
&=&\log \left\{ \frac{1}{\sum_{j=1}^{J_{L}}B^{2j}}\sum_{j=1}^{J_{L}}B^{2j}%
\frac{B^{\left( 2-\alpha _{0}\right) j}}{B^{\left( 2-\alpha \right) j}}%
\right\} +\log \left( I\left( B,\alpha _{0},\alpha \right) \right)
+o_{J_{L}}(1) \\
&=&\log \left\{ \frac{B^{2}-1}{B^{2J_{L}}B^{2}}\frac{B^{(2+\alpha -\alpha
_{0})J_{L}}B^{2+\alpha -\alpha _{0}}}{B^{2+\alpha -\alpha _{0}}-1}\right\}
+\log \left( I\left( B,\alpha _{0},\alpha \right) \right) +o_{J_{L}}(1) \\
&=&\log \left\{ \frac{B^{2}-1}{B^{2+(\alpha -\alpha _{0})}-1}B^{\left(
\alpha -\alpha _{0}\right) J_{L}}B^{\alpha -\alpha _{0}}\right\} +\log
\left( I\left( B,\alpha _{0},\alpha \right) \right) +o_{J_{L}}(1)
\end{eqnarray*}%
\begin{equation}
=\log \left\{ \frac{B^{2}-1}{B^{2+(\alpha -\alpha _{0})}-1}\right\} +\left(
\alpha -\alpha _{0}\right) \left\{ J_{L}+1\right\} \log B+\log \left(
I\left( B,\alpha _{0},\alpha \right) \right) +o_{J_{L}}(1)\text{ .}
\label{xiaohong1}
\end{equation}

Hence, combining (\ref{xiaohong1}) and (\ref{xiaohong2}) we obtain%
\begin{equation*}
\log \left\{ \frac{1}{\sum_{j}N_{j}}\sum_{j=1}^{J_{L}}N_{j}\frac{K_{j}\left(
\alpha _{0}\right) }{K_{j}\left( \alpha \right) }\right\} -\frac{1}{%
\sum_{j}N_{j}}\sum_{j=1}^{J_{L}}N_{j}\log \left\{ \frac{K_{j}\left( \alpha
_{0}\right) }{K_{j}\left( \alpha \right) }\right\}
\end{equation*}%
\begin{equation*}
\log \left\{ \frac{B^{2}-1}{B^{2+(\alpha -\alpha _{0})}-1}\right\} +\left(
\alpha -\alpha _{0}\right) \left\{ J_{L}+1\right\} \log B-(\alpha -\alpha
_{0})\left\{ J_{L}-\frac{1}{B^{2}-1}\right\} \log B+o_{J_{L}}(1)
\end{equation*}

\begin{equation*}
=\log \left\{ \frac{B^{2}-1}{B^{2+(\alpha -\alpha _{0})}-1}\right\} +\left(
\alpha -\alpha _{0}\right) \log B+\frac{(\alpha -\alpha _{0})}{B^{2}-1}\log
B+o_{J_{L}}(1)
\end{equation*}%
\begin{equation*}
=\log \left\{ \frac{B^{2}-1}{B^{2+(\alpha -\alpha _{0})}-1}\right\} +\left(
\alpha -\alpha _{0}\right) \left\{ \frac{B^{2}}{B^{2}-1}\right\} \log
B+o_{J_{L}}(1)\text{ .}
\end{equation*}%
Now consider the function%
\begin{equation*}
l(x):=\log \left\{ \frac{B^{2}-1}{B^{2+x}-1}\right\} +x\left\{ \frac{B^{2}}{%
B^{2}-1}\right\} \log B\text{ ;}
\end{equation*}%
it is readily seen that for $x>-2,$ $l(x)$ is a continuous function such that%
\begin{eqnarray*}
l^{\prime }(x) &=&-\frac{B^{2+x}\log B}{B^{2+x}-1}+\frac{B^{2}}{B^{2}-1}\log
B\text{ , } \\
l^{\prime }(0) &=&0\text{ , }l^{\prime }(x)<0\text{ for }x<0\text{ , }%
l^{\prime }(x)>0\text{ for }x>0\text{ ,}
\end{eqnarray*}%
whence $l(0)=0$ is the unique minimum, and $l(x)>0$ for all $x\neq 0.$ The
first part of the proof is hence concluded.

Take now $\alpha -\alpha _{0}<-2$; we have%
\begin{equation*}
\frac{1}{\log B^{2J_{L}}}\left\{ \log \left[ \frac{1}{\sum_{j}N_{j}}%
\sum_{j=1}^{J_{L}}N_{j}\frac{K_{j}\left( \alpha _{0}\right) }{K_{j}\left(
\alpha \right) }\right] -\frac{1}{\sum_{j=1}^{J_{L}}N_{j}}%
\sum_{j=1}^{J_{L}}N_{j}\log \frac{K_{j}\left( \alpha _{0}\right) }{%
K_{j}\left( \alpha \right) }\right\}
\end{equation*}%
\begin{equation*}
=\frac{1}{\log B^{2J_{L}}}\left\{ \log \left[ \sum_{j=1}^{J_{L}}B^{j\left\{
2+\alpha -\alpha _{0}\right\} }\right] -\log B^{2J_{L}}+O_{J_{L}}(1)-\frac{%
\left\{ \alpha -\alpha _{0}\right\} }{\sum_{j=1}^{J_{L}}N_{j}}%
\sum_{j=1}^{J_{L}}N_{j}\log B^{j}\right\}
\end{equation*}%
\begin{equation*}
=\frac{\alpha _{0}-\alpha }{2}-1+o_{J_{L}}(1)\text{ .}
\end{equation*}%
Finally, for $\alpha _{0}-\alpha =2$ we obtain%
\begin{equation*}
\frac{1}{\log J_{L}}\left\{ \log \left[ \frac{1}{\sum_{j}N_{j}}%
\sum_{j=1}^{J_{L}}N_{j}\frac{K_{j}\left( \alpha _{0}\right) }{K_{j}\left(
\alpha \right) }\right] -\frac{1}{\sum_{j=1}^{J_{L}}N_{j}}%
\sum_{j=1}^{J_{L}}N_{j}\log \frac{K_{j}\left( \alpha _{0}\right) }{%
K_{j}\left( \alpha \right) }\right\}
\end{equation*}%
\begin{equation*}
=\frac{1}{\log J_{L}}\left\{ -\log B^{2J_{L}}+\log J_{L}+O_{J_{L}}(1)-\frac{1%
}{\sum_{j=1}^{J_{L}}N_{j}}\sum_{j=1}^{J_{L}}N_{j}\left[ \log
B^{-2j}+O_{J_{L}}(1)\right] \right\}
\end{equation*}%
\begin{equation*}
=\frac{1}{\log J_{L}}\left\{ -\log B^{2J_{L}}+\log J_{L}+O_{J_{L}}(1)+\log
B^{2J_{L}}+O_{J_{L}}(1)\right\} \text{ ,}
\end{equation*}%
whence%
\begin{equation*}
\lim_{J_{L}\rightarrow \infty }\frac{1}{\log J_{L}}\left\{ \log \frac{1}{%
\sum_{j}N_{j}}\sum_{j=1}^{J_{L}}N_{j}\frac{K_{j}\left( \alpha _{0}\right) }{%
K_{j}\left( \alpha \right) }-\frac{1}{\sum_{j=1}^{J_{L}}N_{j}}%
\sum_{j=1}^{J_{L}}N_{j}\log \frac{K_{j}\left( \alpha _{0}\right) }{%
K_{j}\left( \alpha \right) }\right\} =1\text{ ,}
\end{equation*}%
as claimed.
\end{proof}

Now we look at $T_{J_{L}}\left( \alpha ,\alpha _{0}\right) $. From (\ref%
{lemmaG}), we can prove the following:

\begin{lemma}
\label{consistency2} As $J_{L}\rightarrow \infty ,$ we have%
\begin{equation*}
\sup_{\alpha }\left\vert T_{J_{L}}\left( \alpha ,\alpha _{0}\right)
\right\vert =o_{p}(1)\text{ .}
\end{equation*}
\end{lemma}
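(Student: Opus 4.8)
The plan is to recall that $T_{J_{L}}(\alpha,\alpha_{0})=\log\dfrac{\widehat G(\alpha_{0})}{G(\alpha_{0})}-\log\dfrac{\widehat G(\alpha)}{G(\alpha)}$, so it suffices to control $\sup_{\alpha}\left|\log\dfrac{\widehat G(\alpha)}{G(\alpha)}\right|=\sup_{\alpha}\left|\log\dfrac{\widehat G_{0}(\alpha)}{G_{0}(\alpha)}\right|$ and bound $T_{J_{L}}$ by twice this quantity. By Lemma \ref{lemmagiovneed} (taken with $n=0$), we already have $\sup_{\alpha}\left|\dfrac{\widehat G_{0}(\alpha)}{G_{0}(\alpha)}-1\right|\longrightarrow_{p}0$. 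Since $x\mapsto\log x$ is continuous at $x=1$, and on the event where the above supremum is smaller than $\tfrac12$ we have $\left|\log\dfrac{\widehat G_{0}(\alpha)}{G_{0}(\alpha)}\right|\le 2\left|\dfrac{\widehat G_{0}(\alpha)}{G_{0}(\alpha)}-1\right|$ uniformly in $\alpha$, the convergence in probability transfers through the logarithm. Hence $\sup_{\alpha}\left|\log\dfrac{\widehat G_{0}(\alpha)}{G_{0}(\alpha)}\right|=o_{p}(1)$, and by the triangle inequality $\sup_{\alpha}|T_{J_{L}}(\alpha,\alpha_{0})|\le 2\sup_{\alpha}\left|\log\dfrac{\widehat G_{0}(\alpha)}{G_{0}(\alpha)}\right|=o_{p}(1)$.

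There is one point deserving a little care: the first term $\log\dfrac{\widehat G(\alpha_{0})}{G(\alpha_{0})}$ does not depend on $\alpha$, and one could alternatively handle it directly via Lemma \ref{lemmaG}, which gives $\mathbb{E}(\widehat G(\alpha_{0}))=G_{0}=G(\alpha_{0})$ and $\mathrm{Var}(\widehat G(\alpha_{0}))=O(B^{-2J_{L}})$, so that $\widehat G(\alpha_{0})/G(\alpha_{0})\to_{p}1$ by Chebyshev; but since the uniform statement from Lemma \ref{lemmagiovneed} already covers $\alpha=\alpha_{0}$ as a special case, the single appeal to Lemma \ref{lemmagiovneed} suffices for both terms.

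The main (and essentially only) obstacle is the passage from the uniform-in-$\alpha$ ratio convergence to uniform-in-$\alpha$ convergence of the logarithm; this is routine once one notes that, on the high-probability event $\{\sup_{\alpha}|\widehat G_{0}(\alpha)/G_{0}(\alpha)-1|<\tfrac12\}$, the ratio is bounded away from $0$ and $\infty$ uniformly, so the elementary inequality $|\log x|\le 2|x-1|$ for $x\in[\tfrac12,\tfrac32]$ applies with a single constant independent of $\alpha$ and of $L$. Formally, for any $\varepsilon>0$,
\[
\Pr\!\left(\sup_{\alpha}|T_{J_{L}}(\alpha,\alpha_{0})|>\varepsilon\right)\le\Pr\!\left(\sup_{\alpha}\left|\frac{\widehat G_{0}(\alpha)}{G_{0}(\alpha)}-1\right|>\min\left(\tfrac12,\tfrac{\varepsilon}{8}\right)\right)\longrightarrow 0,
\]
which is exactly the claim.
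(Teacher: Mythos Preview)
Your proof is correct and follows essentially the same approach as the paper: both rely on Lemma~\ref{lemmagiovneed} for the uniform convergence of $\widehat G(\alpha)/G(\alpha)$ to $1$, and the paper additionally invokes Lemma~\ref{lemmaG} with Chebyshev for the $\alpha_0$ term separately (a step you correctly note is redundant). Your treatment is in fact slightly more explicit than the paper's in justifying the uniform passage through the logarithm via the elementary bound $|\log x|\le 2|x-1|$ on $[\tfrac12,\tfrac32]$, whereas the paper simply appeals to Slutzky and leaves the uniform step implicit.
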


\begin{proof}
From (\ref{defGest}) and (\ref{defGfun}), we have that%
\begin{equation*}
\frac{\widehat{G}\left( \alpha _{0}\right) }{G\left( \alpha _{0}\right) }=%
\frac{1}{\sum_{j=1}^{J_{L}}N_{j}}\sum_{j=1}^{J_{L}}\frac{\sum_{k}\beta
_{jk}^{2}}{G_{0}K_{j}\left( \alpha _{0}\right) }\text{ .}
\end{equation*}%
From Lemma \ref{lemmaG}, it is immediate to see that%
\begin{equation*}
\mathbb{E}\left( \frac{\widehat{G}\left( \alpha _{0}\right) }{G\left( \alpha
_{0}\right) }-1\right) =0\text{ ,}
\end{equation*}%
and%
\begin{equation*}
Var\left( \frac{\widehat{G}\left( \alpha _{0}\right) }{G\left( \alpha
_{0}\right) }-1\right) =\rho ^{2}(\alpha _{0},B)\frac{B^{2}-1}{B^{2}}%
B^{-2J_{L}}+o\left( B^{-2J_{L}}\right) =O\left( B^{-2J_{L}}\right) \text{ .}
\end{equation*}%
By applying Chebichev's inequality, we have
\begin{equation*}
\frac{\widehat{G}\left( \alpha _{0}\right) }{G\left( \alpha _{0}\right) }%
-1\longrightarrow _{p}0\text{ ,}
\end{equation*}%
and from Slutzky's lemma:
\begin{equation*}
\log \left( \frac{\widehat{G}\left( \alpha _{0}\right) }{G\left( \alpha
_{0}\right) }\right) \longrightarrow _{p}0\text{ .}
\end{equation*}%
On the other hand, in view of Lemma \ref{lemmagiovneed},
\begin{equation*}
\sup \left\vert \frac{\widehat{G}\left( \alpha \right) }{G\left( \alpha
\right) }-1\right\vert \longrightarrow _{p}0\text{ ,}
\end{equation*}%
so the proof is complete.
\end{proof}

The second main result to be achieved is a Central Limit Theorem for the
estimator $\widehat{\alpha }_{J_{L}}$, which will be investigated by
exploiting some classical argument on asymptotic Gaussianity for extremum
estimates, as recalled for instance by \cite{NmcF}, see also \cite{dlm}. We
shall in fact establish the following

\begin{theorem}
\label{clt0} Let $\widehat{\alpha }_{J_{L}}=$ $\arg \min_{\alpha \in A}$ $%
\emph{R}_{L}(\alpha )$.

a) Under Condition \ref{REGULNEED} we have:%
\begin{equation}
B^{J_{L}}\text{ }\left( \widehat{\alpha }_{J_{L}}-\alpha _{0}\right)
=O_{p}\left( 1\right) \text{ ;}  \label{clt1}
\end{equation}

b) Under Condition \ref{REGULNEED2} we have:
\begin{equation}
\left( \widehat{\alpha }_{J_{L}}-\alpha _{0}\right) \longrightarrow _{p}m%
\text{ ,}  \label{clt2}
\end{equation}%
where%
\begin{equation*}
m=\kappa I\left( B,\alpha _{0}+1,\alpha _{0}\right) \frac{\log B}{\left(
B+1\right) }\text{ ;}
\end{equation*}

c) Under Condition \ref{REGULNEED3} we have:
\begin{equation}
B^{J_{L}}\left( \widehat{\alpha }_{J_{L}}-\alpha _{0}\right) \overset{d}{%
\longrightarrow }\mathcal{N}\left( 0,D\right) \text{ ,}  \label{clt3}
\end{equation}%
where%
\begin{equation}
D=D\left( \alpha _{0},B\right) =\rho ^{2}\left( \alpha _{0};B\right) \Psi
\left( B\right) \text{ , }\Psi \left( B\right) =\frac{\left( B^{2}-1\right)
^{3}}{B^{4}\log ^{2}B}\text{ .}  \label{neve}
\end{equation}
\end{theorem}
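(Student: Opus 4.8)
The argument treats $\widehat{\alpha}_{J_L}$ as an extremum estimator, as in the classical approach recalled by \cite{NmcF} and used in \cite{dlm}. From the first order condition $\mathbf{R}'_{J_L}(\widehat{\alpha}_{J_L})=0$ and a mean value expansion about $\alpha_0$ one obtains
\[
\widehat{\alpha}_{J_L}-\alpha_0 = -\,\mathbf{R}'_{J_L}(\alpha_0)\big/\mathbf{R}''_{J_L}(\bar{\alpha}),
\]
with $\bar{\alpha}$ lying between $\widehat{\alpha}_{J_L}$ and $\alpha_0$, so that $\bar{\alpha}\to_p\alpha_0$ by Theorem \ref{consistenza}; the third order remainder is negligible because $\mathbf{R}'''_{J_L}$ grows only polynomially in $J_L$ while multiplied by $(\widehat{\alpha}_{J_L}-\alpha_0)^2$. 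Differentiating the expression for $\mathbf{R}_{J_L}(\alpha)$ and recalling (\ref{G_est}), (\ref{defGest}), (\ref{defGfun}) one gets $\mathbf{R}'_{J_L}(\alpha)=\widehat{G}_1(\alpha)/\widehat{G}_0(\alpha)-(\sum_jN_j)^{-1}\sum_jN_jU_{1,j}(\alpha)$, and a companion formula for $\mathbf{R}''_{J_L}$ involving $\widehat{G}_0,\widehat{G}_1,\widehat{G}_2$ and the $U_{n,j}$. Everything then reduces to the asymptotics of the \emph{score} $\mathbf{R}'_{J_L}(\alpha_0)$ and of the \emph{information} $\mathbf{R}''_{J_L}(\bar{\alpha})$.

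For the information, I would first use Lemma \ref{lemmagiovneed} to replace each $\widehat{G}_n$ by the deterministic $G_n$ up to $o_p(1)$, then insert the expansions $U_{1,j}(\alpha_0)=\log B^j+o_j(1)$ and $U_{2,j}(\alpha_0)=(\log B^j)^2+o_{j^2}(1)$ from Corollary \ref{Kjcoroll}, together with $N_j=c_BB^{2j}$ and the geometric sum estimates of Proposition \ref{propsumB}. The two $(\log B^{J_L})^2$--order contributions cancel exactly, and what remains is $\mathbf{R}''_{J_L}(\bar{\alpha})\to_p\Sigma(B):=B^2\log^2B/(B^2-1)^2$, the geometrically weighted variance of $U_{1,j}(\alpha_0)$, which is strictly positive; since $\widehat{\alpha}_{J_L}\to_p\alpha_0$ and $\mathbf{R}''_{J_L}(\alpha)$ is smooth in $\alpha$ near $\alpha_0$, evaluating the limit at $\alpha_0$ rather than at $\bar{\alpha}$ introduces only a further $o_p(1)$.

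For the score, write $\sum_k\beta_{jk}^2=\mathbb{E}[\sum_k\beta_{jk}^2]+\eta_j$ with $\eta_j$ centred, and expand $\widehat{G}_1/\widehat{G}_0$ around $G_1/G_0$. This splits $\mathbf{R}'_{J_L}(\alpha_0)$ into a deterministic bias piece, controlled by the relative discrepancy $b_j:=\mathbb{E}[\sum_k\beta_{jk}^2]/(G_0N_jK_j(\alpha_0))-1$, and a centred stochastic piece of the form $G_0^{-1}(\sum_jN_j)^{-1}\sum_j\big(U_{1,j}(\alpha_0)-\overline{U}_1\big)K_j(\alpha_0)^{-1}\eta_j$, where $\overline{U}_1$ is the $N_j$--weighted mean of the $U_{1,j}(\alpha_0)$. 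Expanding $C_l$ under Conditions \ref{REGULNEED}, \ref{REGULNEED2} and \ref{REGULNEED3}, and invoking the Appendix estimates for the ratios $K_j(\alpha_0+r)/K_j(\alpha_0)$ (Proposition \ref{propKj}) together with Proposition \ref{propsumB}, the bias piece is $o(B^{-J_L})$ under \ref{REGULNEED3}; under \ref{REGULNEED2} it is asymptotic to a nonzero multiple of $\kappa$, the multiple being exactly such that after division by $\Sigma(B)$ it produces $m=(B+1)\kappa/B$; and under \ref{REGULNEED} it is only of order $\log B^{J_L}\,B^{-J_L}$. For the stochastic piece one uses that $\eta_j,\eta_{j'}$ are uncorrelated for $|j-j'|\ge 2$ and that $\mathrm{Cov}(\eta_j,\eta_{j\pm1})$ is governed by Lemma \ref{gavarini} through $\sigma^2,\tau_\pm^2$; since the geometric weights make the top scales $j\approx J_L$ dominate, Proposition \ref{propsumB} gives that $B^{J_L}$ times this piece has variance tending to $\rho^2(\alpha_0,B)\log^2B/(B^2-1)$ (the combination of $\sigma^2,\tau_\pm^2$ and the $K_j$--normalisation in Lemma \ref{lemmaG} being precisely $\rho^2$). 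Asymptotic Gaussianity then follows because each $\sum_k\beta_{jk}^2$ is a weighted sum of independent $\chi^2_{2l+1}$ over $B^{j-1}<l<B^{j+1}$, hence Lyapunov--asymptotically normal as $j\to\infty$; the block of a fixed number of top scales is therefore jointly asymptotically Gaussian (being a quadratic functional of finitely many independent Gaussian blocks), and the remaining scales are uniformly $L^2$--negligible by the variance bound.

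Assembling the pieces: under Condition \ref{REGULNEED3} the stochastic piece dominates, so $B^{J_L}(\widehat{\alpha}_{J_L}-\alpha_0)\to_d\mathcal{N}(0,D)$ with $D=\rho^2(\alpha_0,B)\log^2B/\big((B^2-1)\Sigma(B)^2\big)=\rho^2(\alpha_0;B)\Psi(B)$, which is (\ref{neve}); under \ref{REGULNEED2} the bias piece dominates and yields the deterministic limit $m$ of (\ref{clt2}); and under \ref{REGULNEED} the same estimates only deliver boundedness in probability, i.e. (\ref{clt1}). The genuinely delicate points are the central limit theorem for the weighted, finitely dependent but magnitude--growing quadratic forms $\sum_k\beta_{jk}^2$ (organising the joint normality of the top--scale block while discarding the tail), and the precise evaluation of the bias term under Condition \ref{REGULNEED2}, which is what pins down the constant $\tfrac{B+1}{B}\kappa$.
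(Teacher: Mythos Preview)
Your argument is essentially the same as the paper's: both use the extremum-estimator expansion $\widehat{\alpha}_{J_L}-\alpha_0=-S_{J_L}(\alpha_0)/Q_{J_L}(\bar{\alpha})$, Lemma~\ref{lemmagiovneed} and Corollary~\ref{Kjcoroll} to show $Q_{J_L}(\bar{\alpha})\to_p B^2\log^2B/(B^2-1)^2$, and the bias/variance decomposition of the score via Lemma~\ref{gavarini}, Proposition~\ref{propKj} and Proposition~\ref{propsumB}. (Your remark about a ``third order remainder'' is superfluous: once you have the exact mean-value form $-S_{J_L}(\alpha_0)/Q_{J_L}(\bar{\alpha})$ there is no remainder to control.)

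The one genuine difference is how asymptotic normality of the score is obtained. The paper observes that $B^{J_L}S_{J_L}(\alpha_0)$ lies in the second Wiener chaos and invokes the fourth-moment theorem of \cite{nourdinpeccati}: Lemma~\ref{cumulants} shows the fourth cumulant is $O(J_L^4\log^4B\,B^{-2J_L})\to 0$, which together with convergence of the variance yields the CLT directly. Your route---Lyapunov CLT for each $\sum_k\beta_{jk}^2$ as a weighted sum of independent $\chi^2_{2l+1}$, joint normality of a fixed window of top scales, then $L^2$-negligibility of the remaining scales---is also valid, since the score is ultimately a linear combination of the independent $\widehat{C}_l$; indeed a single Lyapunov/Lindeberg argument on that linear combination would be more direct than your truncation step. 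The cumulant approach buys a one-line verification tailored to quadratic Gaussian functionals; your approach is more elementary and avoids the Wiener-chaos machinery, at the cost of a slightly longer triangular-array bookkeeping.
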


\begin{proof}
By a standard Mean Value Theorem argument and consistency, for each $J_{L}$
there exists $\overline{\alpha }_{J_{L}}\in \left( \alpha _{0}-\widehat{%
\alpha },\alpha _{0}+\widehat{\alpha }\right) $ such that, with probability
one:%
\begin{equation*}
\left( \widehat{\alpha }_{J_{L}}-\alpha _{0}\right) =-\frac{S_{J_{L}}(\alpha
_{0})}{Q_{J_{L}}(\overline{\alpha }_{L})}\text{ ,}
\end{equation*}%
where $S_{J_{L}}(\alpha )$ is the score function corresponding to $%
R_{J_{L}}\left( \alpha \right) $, given by:

\begin{eqnarray*}
&=&\frac{1}{\sum_{j}N_{j}}\sum_{j}\sum_{k}\frac{\beta _{jk}^{2}}{\widehat{G}%
\left( \alpha \right) K_{j}\left( \alpha \right) }\left( -\frac{%
K_{j,1}\left( \alpha \right) }{K_{j}\left( \alpha \right) }\right) +\frac{1}{%
\sum_{j}N_{j}}\sum_{j}\frac{K_{j,1}\left( \alpha \right) }{K_{j}\left(
\alpha \right) }N_{j} \\
&=&\frac{1}{\sum_{j}N_{j}}\frac{G\left( \alpha \right) }{\widehat{G}\left(
\alpha \right) }\sum_{j}\left( -\frac{K_{j,1}\left( \alpha \right) }{%
K_{j}\left( \alpha \right) }\right) \sum_{k}\left( \frac{G\left( \alpha
\right) }{\widehat{G}\left( \alpha \right) }\frac{\beta _{jk}^{2}}{G\left(
\alpha \right) K_{j}\left( \alpha \right) }-\frac{\widehat{G}\left( \alpha
\right) }{G\left( \alpha \right) }\right) \text{ ,}\,
\end{eqnarray*}%
and%
\begin{equation*}
Q_{J_{L}}\left( \alpha \right) =\frac{d^{2}}{d\alpha ^{2}}R_{J_{L}}\left(
\alpha \right) \text{ ,}
\end{equation*}%
i.e.%
\begin{equation*}
Q_{J_{L}}\left( \alpha \right) =\frac{G_{2}\left( \alpha \right) G\left(
\alpha \right) -\left( G_{1}\left( \alpha \right) \right) ^{2}}{\left(
G\left( \alpha \right) \right) ^{2}}+\frac{1}{\sum_{j}N_{j}}\sum_{j}N_{j}%
\frac{K_{j,2}\left( \alpha \right) K_{j}\left( \alpha \right) -\left(
K_{j,1}\left( \alpha \right) \right) ^{2}}{\left( K_{j}\left( \alpha \right)
\right) ^{2}}
\end{equation*}%
\begin{eqnarray*}
&=&\frac{\left( \sum_{j}N_{j}\frac{K_{j}\left( \alpha _{0}\right) }{%
K_{j}\left( \alpha \right) }\left( 2\left( \frac{K_{j,1}\left( \alpha
\right) }{K_{j}\left( \alpha \right) }\right) ^{2}-\frac{K_{j,2}\left(
\alpha \right) }{K_{j}\left( \alpha \right) }\right) \right) \left(
\sum_{j}N_{j}\frac{K_{j}\left( \alpha _{0}\right) }{K_{j}\left( \alpha
\right) }\right) }{\left( \sum_{j}N_{j}\frac{K_{j}\left( \alpha _{0}\right)
}{K_{j}\left( \alpha \right) }\right) ^{2}} \\
&&-\frac{\left( \sum_{j}N_{j}\frac{K_{j}\left( \alpha _{0}\right) }{%
K_{j}\left( \alpha \right) }\left( -\frac{K_{j,1}\left( \alpha \right) }{%
K_{j}\left( \alpha \right) }\right) \right) ^{2}}{\left( \sum_{j}N_{j}\frac{%
K_{j}\left( \alpha _{0}\right) }{K_{j}\left( \alpha \right) }\right) ^{2}}+%
\frac{1}{\sum_{j}N_{j}}\sum_{j}N_{j}\frac{K_{j,2}\left( \alpha \right)
K_{j}\left( \alpha \right) -\left( K_{j,1}\left( \alpha \right) \right) ^{2}%
}{\left( K_{j}\left( \alpha \right) \right) ^{2}}\text{ .}
\end{eqnarray*}%
where $\widehat{G}(\alpha )$, $\widehat{G}_{1}(\alpha )$, $\widehat{G}%
_{2}(\alpha )$ are respectively the estimate of $G$ and its first and second
derivatives, as in Lemma \ref{lemmagiovneed}. In order to establish the
Central Limit Theorem, we analyze the fourth order cumulants, observing that
this statistics belong to the second order Wiener chaos with respect to a
Gaussian white noise random measure (see \cite{nourdinpeccati}). Let%
\begin{equation*}
B^{J_{L}}S_{J_{L}}\left( \alpha _{0}\right) =\frac{1}{B^{J_{L}}}%
\sum_{j}\left( A_{j}+B_{j}\right) \text{ ,}
\end{equation*}%
where%
\begin{eqnarray}
A_{j} &=&B^{2j}\log B^{j}\left\{ \frac{\sum_{k}\beta _{jk}^{2}}{%
N_{j}G_{0}K_{j}\left( \alpha _{0}\right) }-1\right\} \text{ ,}
\label{Aj_cum} \\
B_{j} &=&B^{2j}\log B^{j}\left\{ \frac{\widehat{G}_{J_{L}}(\alpha _{0})}{%
G_{0}}-1\right\} \text{ .}  \label{Bj_cum}
\end{eqnarray}%
In the Appendix, Lemma \ref{cumulants} shows that:%
\begin{equation*}
\frac{1}{B^{4J_{L}}}cum\left\{
\sum_{l_{1}}(A_{j_{1}}+B_{j_{1}}),\sum_{l_{2}}(A_{j_{2}}+B_{j_{2}}),%
\sum_{l_{3}}(A_{j_{3}}+B_{j_{3}}),\sum_{l_{4}}(A_{j_{4}}+B_{j_{4}})\right\}
\end{equation*}%
\begin{equation*}
=O_{J_{L}}\left( \frac{J_{L}^{4}\log ^{4}B}{B^{2J_{L}}}\right) \text{ .}
\end{equation*}

Central Limit Theorem follows therefore from results in \cite{nourdinpeccati}%
. The proofs of (\ref{clt2}) and (\ref{clt3}) are completed by combining the
following Lemmas \ref{lemmaS0} and \ref{LemmaS1}. Observe that under
Condition \ref{REGULNEED2}, the only difference between (\ref{clt1}) and (%
\ref{clt2}) concerns the possibility to estimate analytically the bias term.
\end{proof}

The following result concerns the behaviour of $S_{J_{L}}\left( \alpha
\right) $.

\begin{lemma}
\label{lemmaS0}Under Condition \ref{REGULNEED2}, we have:%
\begin{equation*}
B^{J_{L}}S_{J_{L}}\left( \alpha _{0}\right) \longrightarrow _{p}\kappa
I\left( B,\alpha _{0}+1,\alpha _{0}\right) \frac{\log B}{\left( B+1\right) }%
\kappa \text{ ;}
\end{equation*}%
while under Condition \ref{REGULNEED3} we have:%
\begin{equation*}
B^{J_{L}}S_{J_{L}}\left( \alpha _{0}\right) \overset{d}{\longrightarrow }%
\mathcal{N}\left( 0,\rho _{B}^{2}\left( \alpha _{0}\right) \frac{\log ^{2}B}{%
(B^{2}-1)}\right)
\end{equation*}
\end{lemma}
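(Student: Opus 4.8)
The plan is to work from the decomposition of the score already recorded in the proof of Theorem~\ref{clt0}, namely $B^{J_L}S_{J_L}(\alpha_0)=\tfrac{1}{B^{J_L}}\sum_{j=1}^{J_L}(A_j+B_j)$ with $A_j,B_j$ as in (\ref{Aj_cum})--(\ref{Bj_cum}), and to treat both assertions as a ``mean plus centred fluctuation'' analysis. Writing $A_j+B_j=(A_j-\mathbb{E}A_j)+(B_j-\mathbb{E}B_j)+\mathbb{E}A_j+\mathbb{E}B_j$, the deterministic part $\tfrac{1}{B^{J_L}}\sum_j(\mathbb{E}A_j+\mathbb{E}B_j)$ carries the bias while the stochastic part is controlled through its variance; recall that $\mathbb{E}B_j$ only involves $\mathbb{E}\widehat G(\alpha_0)/G_0-1=\sum_jN_jb_j/\sum_jN_j$, which is handled by Lemma~\ref{lemmaG} and (\ref{expvaluebetabias}), so that $B_j$ contributes a legitimate term and not a spurious one. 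For the first assertion (Condition~\ref{REGULNEED2}) I would prove that the bias, divided by $\log B^{J_L}$, tends to $(B+1)\kappa/B$ and that the variance, divided by $(\log B^{J_L})^2$, tends to $0$; Chebyshev's inequality then yields convergence in probability. For the second assertion (Condition~\ref{REGULNEED3}) the bias is asymptotically negligible and the statement is a genuine central limit theorem, obtained from the fourth-cumulant bound of Lemma~\ref{cumulants} together with the fact that the statistic lies in the second Wiener chaos of a Gaussian white-noise measure, via the Nourdin--Peccati criterion \cite{nourdinpeccati}, once the limiting variance has been identified.

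The crux is the bias under Condition~\ref{REGULNEED2}. Here $C_l=G_0l^{-\alpha_0}(1+\kappa l^{-1}+O(l^{-2}))$, so by (\ref{expbeta2}) one has $\mathbb{E}A_j=B^{2j}\log B^{j}\,b_j$ with $b_j=\mathbb{E}[\sum_k\beta_{jk}^2]/(N_jG_0K_j(\alpha_0))-1=\kappa\,\sum_lb^2(l/B^j)(2l+1)l^{-\alpha_0-1}/\sum_lb^2(l/B^j)(2l+1)l^{-\alpha_0}+O(\cdot)$. The first step is to expand this ratio, i.e.\ the ratio $K_j(\alpha_0+1)/K_j(\alpha_0)$, using the asymptotics of $K_j$ and its derivatives from Proposition~\ref{propKj} and Corollary~\ref{Kjcoroll}; this gives $b_j=\kappa\,c(\alpha_0,B)B^{-j}+O(B^{-2j})$, and the same asymptotics applied to $\mathbb{E}\widehat G(\alpha_0)$ give $\mathbb{E}B_j=B^{2j}\log B^{j}\cdot\kappa\,c(\alpha_0,B)\big(\sum_iB^{i}/\sum_iB^{2i}\big)B^{-J_L}(1+o(1))$. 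Substituting into $\tfrac{1}{B^{J_L}}\sum_j(\mathbb{E}A_j+\mathbb{E}B_j)$ reduces everything to the geometric--polynomial sums $\sum_jB^{j}$, $\sum_jjB^{j}$, $\sum_jB^{2j}$, $\sum_jjB^{2j}$ evaluated by Proposition~\ref{propsumB}; isolating the coefficient of $J_L$ after the two contributions are combined, and checking that the $\alpha_0$-dependent constants $c(\alpha_0,B)$ cancel against the $I$-type constants appearing in the $K_j$-ratios, is expected to leave exactly $(B+1)\kappa B^{-1}\log B^{J_L}+o(\log B^{J_L})$. This cancellation — and the precise identification of all the Appendix constants it requires — is the main computational obstacle of the proof.

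For the stochastic part I would bound $Var(B^{J_L}S_{J_L}(\alpha_0))=B^{-2J_L}Var(\sum_j(A_j+B_j))$ by means of Lemma~\ref{gavarini}: since $A_j-\mathbb{E}A_j$ and $B_j-\mathbb{E}B_j$ are linear combinations of the quadratic statistics $\sum_k\beta_{jk}^2$, whose rescaled variances and lag-one covariances converge to $G_0^2\sigma^2$ and $G_0^2\tau_\pm^2$, the sum is dominated by the top scales $j\approx J_L$ (where the weights $(\log B^{j}-\overline U_{J_L})$ are $O(1)$) and one gets $Var(B^{J_L}S_{J_L}(\alpha_0))=\rho_B^2(\alpha_0)\log^2B/(B^2-1)+o(1)$, a bounded quantity. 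Dividing by $(\log B^{J_L})^2=(J_L\log B)^2\to\infty$ kills this variance, so that the first assertion follows from the previous paragraph by Chebyshev's inequality. For the second assertion the same computation identifies $\rho_B^2(\alpha_0)\log^2B/(B^2-1)$ as the limiting variance, while replacing $\kappa l^{-1}$ by $o(l^{-1})$ in $C_l$ makes $\tfrac{1}{B^{J_L}}\sum_j(\mathbb{E}A_j+\mathbb{E}B_j)=o(1)$, so that $B^{J_L}S_{J_L}(\alpha_0)$ is asymptotically centred; asymptotic normality then follows because Lemma~\ref{cumulants} gives normalized fourth cumulant $O(J_L^4\log^4B/B^{2J_L})\to0$, and a sequence in a fixed Wiener chaos with convergent positive variance and vanishing fourth cumulant converges in law to a centred Gaussian.
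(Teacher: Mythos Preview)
Your proposal follows essentially the same route as the paper: reduce to the statistic built from the quadratic forms $\sum_k\beta_{jk}^2/(G_0K_j(\alpha_0))$, separate bias from centred fluctuation, identify the bias under Condition~\ref{REGULNEED2} via (\ref{expvaluebetak}) and the geometric sums of Proposition~\ref{propsumB}, identify the limiting variance via Lemma~\ref{gavarini} and Proposition~\ref{propKj}, and then invoke the fourth-cumulant bound of Lemma~\ref{cumulants} together with the second-chaos criterion of \cite{nourdinpeccati} for the Gaussian limit.

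Two small differences are worth noting. First, the paper does not carry both $\mathbb{E}A_j$ and $\mathbb{E}B_j$ through the bias computation as you plan to: it first passes from $S_{J_L}(\alpha_0)$ to $\overline S_{J_L}(\alpha_0)$ by the Slutsky replacement $G_0/\widehat G(\alpha_0)\to_p 1$ (Lemma~\ref{lemmagiovneed}) and then uses Lemma~\ref{lemmaG} to set $\mathbb{E}(\widehat G(\alpha_0)/G_0)=1$, so that the bias reduces to $\frac{\kappa}{\sum_jB^{2j}}\sum_jB^{j}\log B^{j}$ alone and the constant $(B+1)/B$ drops out directly, with no $c(\alpha_0,B)$ cancellation to track. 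If you keep the $\mathbb{E}B_j$ contribution explicitly you will find that its leading $J_L$-term matches that of $\sum_j\mathbb{E}A_j$, so the anticipated ``cancellation'' is between the $A$ and $B$ contributions at order $J_L$, not between $\alpha_0$-dependent $I$-ratios; the paper's shortcut sidesteps this entirely. Second, for the variance the paper uses an explicit three-term split (variance of the $\beta$-part, variance of the $\widehat G$-part, and their cross-covariance) and then the closed-form identity $Z_{J_L}(2)$ of Corollary~\ref{sumcorollary} to collapse everything to $\rho^2(\alpha_0,B)\log^2 B/(B^2-1)$, which is slightly cleaner than tracking centred weights $(\log B^{j}-\overline U_{J_L})$ directly.
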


\begin{proof}
We have that:%
\begin{equation*}
S_{J_{L}}\left( \alpha _{0}\right) =\frac{1}{\sum_{j}N_{j}}\sum_{j}\left( -%
\frac{K_{j,1}\left( \alpha _{0}\right) }{K_{j}\left( \alpha _{0}\right) }%
\right) \sum_{k}\left( \frac{G_{0}}{\widehat{G}\left( \alpha _{0}\right) }%
\frac{\beta _{jk}^{2}}{G_{0}K_{j}\left( \alpha _{0}\right) }-\frac{\widehat{G%
}\left( \alpha \right) }{G_{0}}\right) \text{ ,}
\end{equation*}%
where we recall that for Lemma \ref{lemmagiovneed}:%
\begin{equation*}
\frac{G_{0}}{\widehat{G}\left( \alpha _{0}\right) }\longrightarrow _{p}1%
\text{ .}
\end{equation*}%
Then we will study the behaviour of
\begin{equation*}
\overline{S}_{J_{L}}\left( \alpha _{0}\right) =\frac{1}{\sum_{j}N_{j}}%
\sum_{j}\left( -\frac{K_{j,1}\left( \alpha _{0}\right) }{K_{j}\left( \alpha
_{0}\right) }\right) \sum_{k}\left( \frac{\beta _{jk}^{2}}{G_{0}K_{j}\left(
\alpha _{0}\right) }-\frac{\widehat{G}\left( \alpha _{0}\right) }{G_{0}}%
\right) \text{ .}
\end{equation*}%
Under Condition \ref{REGULNEED2}, simple calculations, in view of (\ref%
{expvaluebetak}), (\ref{K1lim})\ in Corollary \ref{Kjcoroll} and (\ref{Jint1}%
) in Proposition \ref{propsumB}, lead to%
\begin{eqnarray*}
&&\lim_{J_{L}\rightarrow \infty }B^{J_{L}}\mathbb{E}\left( \overline{S}%
_{J_{L}}\left( \alpha _{0}\right) \right)  \\
&=&\lim_{J_{L}\rightarrow \infty }B^{J_{L}}%
\frac{1}{\sum_{j}N_{j}}\sum_{j}\log B^{j}N_{j}\left( \frac{%
\mathbb{E}\left( \sum_{k}\beta _{jk}^{2}\right) }{N_{j}G_{0}K_{j}\left(
\alpha _{0}\right) }-\mathbb{E}\left( \frac{\widehat{G}\left( \alpha
_{0}\right) }{G_{0}}\right) \right)  \\
&=&\lim_{J_{L}\rightarrow \infty }B^{J_{L}}\frac{I_{0}\left( B,\alpha
_{0}\right) }{I_{0}\left( B,\alpha \right) }\frac{\kappa }{%
\sum_{j=J_{0}}^{J_{L}}B^{2j}}\sum_{j=J_{0}}^{J_{L}}\log B^{j}\cdot
B^{2j}\left( B^{-j}-\frac{1}{\sum_{j=J_{0}}^{J_{L}}B^{2j}}%
\sum_{j=J_{0}}^{J_{L}}B^{j}\right) +o_{J_{L}}\left( 1\right)  \\
&=&\lim_{J_{L}\rightarrow \infty }\kappa I\left( B,\alpha _{0}+1,\alpha
_{0}\right) \frac{\log B}{\left( B+1\right) }+o_{J_{L}}\left( 1\right) \text{
,}
\end{eqnarray*}%
while under Condition \ref{REGULNEED3} we have
\begin{equation*}
\lim_{J_{L}\rightarrow \infty }\mathbb{E}\left( \overline{S}_{J_{L}}\left(
\alpha _{0}\right) \right) =0\text{ .}
\end{equation*}%
Moreover we obtain:%
\begin{equation*}
Var\left( \overline{S}_{J_{L}}\left( \alpha _{0}\right) \right)
\end{equation*}%
\begin{eqnarray*}
&=&Var\left( \frac{-1}{\sum_{j}N_{j}}\sum_{j=1}^{J_{L}}\frac{K_{j,1}\left(
\alpha _{0}\right) }{K_{j}\left( \alpha _{0}\right) }\left( \frac{%
\sum_{k}\beta _{jk}^{2}}{G_{0}K_{j}\left( \alpha _{0}\right) }-N_{j}\frac{%
\widehat{G}\left( \alpha _{0}\right) }{G_{0}}\right) \right)  \\
&=&A+B+C\text{ ,}
\end{eqnarray*}%
where%
\begin{eqnarray*}
A &=&\frac{1}{\left( \sum_{j}N_{j}\right) ^{2}}\sum_{j_{1},j_{2}}\left(
\frac{K_{j_{1},1}\left( \alpha _{0}\right) }{K_{j_{1}}\left( \alpha
_{0}\right) }\frac{K_{j_{2},1}\left( \alpha _{0}\right) }{K_{j_{2}}\left(
\alpha _{0}\right) }\right) Cov\left( \frac{\sum_{k_{1}}\left\vert \beta
_{j_{1}k_{1}}\right\vert ^{2}}{G_{0}K_{j_{1}}\left( \alpha _{0}\right) },%
\frac{\sum_{k_{2}}\left\vert \beta _{j_{2}k_{2}}\right\vert ^{2}}{%
G_{0}K_{j_{2}}\left( \alpha _{0}\right) }\right) \text{ ;} \\
B &=&\frac{1}{\left( \sum_{j}N_{j}\right) ^{2}}\sum_{j_{1},j_{2}}\left(
\frac{K_{j_{1},1}\left( \alpha _{0}\right) }{K_{j_{1}}\left( \alpha
_{0}\right) }\frac{K_{j_{2},1}\left( \alpha _{0}\right) }{K_{j_{2}}\left(
\alpha _{0}\right) }\right) N_{j_{1}}N_{j_{2}}Var\left( \frac{\widehat{G}%
\left( \alpha _{0}\right) }{G_{0}}\right) \text{ ;} \\
C &=&\frac{-2}{\left( \sum_{j}N_{j}\right) ^{2}}\sum_{j_{1},j_{2}}\left(
\frac{K_{j_{1},1}\left( \alpha _{0}\right) }{K_{j_{1}}\left( \alpha
_{0}\right) }\frac{K_{j_{2},1}\left( \alpha _{0}\right) }{K_{j_{2}}\left(
\alpha _{0}\right) }\right) Cov\left( \frac{\sum_{k}\left\vert \beta
_{j_{1}k}\right\vert ^{2}}{G_{0}K_{j_{1}}\left( \alpha _{0}\right) }%
,N_{j_{2}}\frac{\widehat{G}\left( \alpha _{0}\right) }{G_{0}}\right) \text{ .%
}
\end{eqnarray*}%
In view of Proposition \ref{propKj} and Lemma \ref{gavarini}, we obtain:%
\begin{eqnarray*}
A &=&\frac{1}{\left( \sum_{j}N_{j}\right) ^{2}}\sum_{j=1}^{J_{L}}\left(
\frac{K_{j,1}\left( \alpha _{0}\right) }{K_{j}\left( \alpha _{0}\right) }%
\right) ^{2}Var\left( \frac{\sum_{k}\beta _{jk}^{2}}{G_{0}K_{j}\left( \alpha
_{0}\right) }\right)  \\
&&+\sum_{j=1}^{J_{L}-1}\left( \frac{K_{j,1}\left( \alpha _{0}\right) }{%
K_{j}\left( \alpha _{0}\right) }\frac{K_{j+1,1}\left( \alpha _{0}\right) }{%
K_{j+1}\left( \alpha _{0}\right) }\right) Cov\left( \frac{\sum_{k}\beta
_{jk}^{2}}{G_{0}K_{j}\left( \alpha _{0}\right) },\frac{\sum_{k}\beta
_{j+1,k}^{2}}{G_{0}K_{j+1}\left( \alpha _{0}\right) }\right)  \\
&&+\sum_{j=1}^{J_{L}}\left( \frac{K_{j,1}\left( \alpha _{0}\right) }{%
K_{j}\left( \alpha _{0}\right) }\frac{K_{j-1}\left( \alpha _{0}\right) }{%
K_{j-1}\left( \alpha _{0}\right) }\right) Cov\left( \frac{\sum_{k}\beta
_{jk}^{2}}{G_{0}K_{j}\left( \alpha _{0}\right) },\frac{\sum_{k}\beta
_{j-1,k}^{2}}{G_{0}K_{j-1}\left( \alpha _{0}\right) }\right)  \\
&=&\frac{1}{\left( \sum_{j}N_{j}\right) ^{2}}\left( \sum_{j}\log ^{2}B^{j}%
\frac{c_{B}^{2}\left( \sigma ^{2}+B^{-\alpha _{0}}\tau ^{2}\right) }{%
I_{0}^{2}\left( B\right) }B^{2j}+o_{J_{L}}\left( 1\right) \right)  \\
&=&\rho ^{2}\left( \alpha _{0},B\right) \frac{1}{\left(
\sum_{j}B^{2j}\right) ^{2}}\sum_{j}B^{2j}\log ^{2}B^{j}+o_{J_{L}}\left(
1\right) \text{ .}
\end{eqnarray*}%
because%
\begin{eqnarray*}
&&\sum_{j=1}^{J_{L}-1}\left( \frac{K_{j,1}\left( \alpha _{0}\right) }{%
K_{j}\left( \alpha _{0}\right) }\frac{K_{j+1,1}\left( \alpha _{0}\right) }{%
K_{j+1}\left( \alpha _{0}\right) }\right) Cov\left( \frac{\sum_{k}\beta
_{jk}^{2}}{G_{0}K_{j}\left( \alpha _{0}\right) },\frac{\sum_{k}\beta
_{j+1,k}^{2}}{G_{0}K_{j+1}\left( \alpha _{0}\right) }\right)  \\
&=&\sum_{j=1}^{J_{L}-1}\left( \left( \frac{K_{j,1}\left( \alpha _{0}\right)
}{K_{j}\left( \alpha _{0}\right) }\frac{K_{j+1,1}\left( \alpha _{0}\right) }{%
K_{j+1}\left( \alpha _{0}\right) }\right) \frac{\tau _{+}^{2}}{%
I_{0}^{2}\left( \alpha _{0},B\right) B^{\alpha _{0}}}\frac{B^{\left(
2-2\alpha _{0}\right) \left( j+1\right) }}{B^{-2\alpha _{0}\left( j+1\right)
}}+o_{j}\left( 1\right) \right)  \\
&=&\text{ }\sum_{j=1}^{J_{L}}\left( \left( \frac{K_{j,1}\left( \alpha
_{0}\right) }{K_{j}\left( \alpha _{0}\right) }\right) ^{2}\frac{B^{-\alpha
_{0}}\tau _{+}^{2}}{I_{0}^{2}\left( \alpha _{0},B\right) }+o_{j}\left(
1\right) \right)
\end{eqnarray*}%
and, likewise,
\begin{eqnarray*}
&&\sum_{j=1}^{J_{L}}\left( \frac{K_{j,1}\left( \alpha _{0}\right) }{%
K_{j}\left( \alpha _{0}\right) }\frac{K_{j-1}\left( \alpha _{0}\right) }{%
K_{j-1}\left( \alpha _{0}\right) }\right) Cov\left( \frac{\sum_{k}\beta
_{jk}^{2}}{G_{0}K_{j}\left( \alpha _{0}\right) },\frac{\sum_{k}\beta
_{j-1,k}^{2}}{G_{0}K_{j-1}\left( \alpha _{0}\right) }\right)  \\
&=&\sum_{j=1}^{J_{L}}\left( \left( \frac{K_{j,1}\left( \alpha _{0}\right) }{%
K_{j}\left( \alpha _{0}\right) }\right) ^{2}\frac{B^{-\alpha _{0}}\tau
_{-}^{2}}{I_{0}^{2}\left( \alpha _{0},B\right) }+o_{j}\left( 1\right)
\right) \text{ .}
\end{eqnarray*}%
On the other hand, by Lemma \ref{lemmaG}, we have:%
\begin{eqnarray*}
B &=&\frac{1}{\left( \sum_{j}B^{2j}\right) ^{4}}\rho ^{2}\left( \alpha
_{0},B\right) \sum_{j_{1}=1}^{J_{L}}B^{2j_{1}}\log
B^{j_{1}}\sum_{j_{2}=1}^{J_{L}}B^{2j_{2}}\log
B^{j_{2}}\sum_{j_{3}=1}^{J_{L}}B^{2j_{3}}+o_{J_{L}}\left( 1\right)  \\
&=&\rho ^{2}\left( \alpha _{0},B\right) \frac{\left( \sum_{j}B^{2j}\log
B^{j}\right) ^{2}}{\left( \sum_{j}B^{2j}\right) ^{3}}\text{ }%
+o_{J_{L}}\left( 1\right) \text{.}
\end{eqnarray*}%
Finally we have:%
\begin{eqnarray*}
C &=&\frac{-2}{\left( \sum_{j}N_{j}\right) ^{3}}\sum_{j_{1},j_{2}}\left(
\frac{K_{j_{1},1}\left( \alpha _{0}\right) }{K_{j_{1}}\left( \alpha
_{0}\right) }\frac{K_{j_{2},1}\left( \alpha _{0}\right) }{K_{j_{2}}\left(
\alpha _{0}\right) }\right) N_{j_{2}}\sum_{j_{3}}^{J_{L}}Cov\left( \frac{%
\sum_{k}\beta _{j_{1}k}^{2}}{G\left( \alpha _{0}\right) K_{j_{1}}\left(
\alpha _{0}\right) },\frac{\sum_{k}\left\vert \beta _{j_{3}k}\right\vert ^{2}%
}{G_{0}K_{j_{3}}\left( \alpha _{0}\right) }\right)  \\
&=&\frac{-2}{\left( \sum_{j}N_{j}\right) ^{3}}\sum_{j_{1},j_{2}}\left( \frac{%
K_{j_{1},1}\left( \alpha _{0}\right) }{K_{j_{1}}\left( \alpha _{0}\right) }%
\frac{K_{j_{2},1}\left( \alpha _{0}\right) }{K_{j_{2}}\left( \alpha
_{0}\right) }\right) N_{j_{2}}\frac{c_{B}^{2}\left( \sigma ^{2}+B^{-\alpha
_{0}}\tau ^{2}\right) }{I_{0}^{2}\left( B\right) }B^{2j_{1}}+o_{J_{L}}\left(
1\right)  \\
&=&-2\rho ^{2}\left( \alpha _{0},B\right) \frac{\left( \sum_{j}B^{2j}\log
B^{j}\right) ^{2}}{\left( \sum_{j}B^{2j}\right) ^{3}}\text{ }%
+o_{J_{L}}\left( 1\right) \text{.}
\end{eqnarray*}%
Hence, following Corollary \ref{sumcorollary} in the Appendix and equation (%
\ref{Jint0}), we have:
\begin{eqnarray}
Var\left( \overline{S}_{J_{L}}\left( \alpha _{0}\right) \right)  &=&\rho
^{2}\left( \alpha _{0},B\right) \frac{1}{\left( \sum_{j}B^{2j}\right) ^{3}}
\notag \\
&&\times \left[ \sum_{j}B^{2j}\sum_{j}B^{2j}\log ^{2}B^{j}-\left(
\sum_{j}B^{2j}\log B^{j}\right) ^{2}+o\left( B^{-J_{L}}\right) \right]
\notag \\
&=&\rho ^{2}\left( \alpha _{0},B\right) \frac{1}{\left(
\sum_{j}B^{2j}\right) ^{3}}Z_{J_{L}}\left( 2\right) +o\left(
B^{-2J_{L}}\right)   \notag \\
&=&\rho ^{2}\left( \alpha _{0},B\right) \frac{\left( B^{2}-1\right) ^{3}}{%
B^{6J_{L}}B^{6}}B^{4J_{L}}\log ^{2}B\frac{B^{6}}{(B^{2}-1)^{4}}+o\left(
B^{-2J_{L}}\right)   \notag \\
&=&\rho ^{2}\left( \alpha _{0},B\right) \frac{\log ^{2}B}{(B^{2}-1)}%
B^{-2J_{L}}\text{ }+o\left( B^{-2J_{L}}\right) \text{.}  \label{VarSJ1}
\end{eqnarray}%
Finally we have:%
\begin{equation*}
\lim_{J_{L}\rightarrow \infty }Var\left( B^{J_{L}}S_{J_{L}}\left( \alpha
_{0}\right) \right) =\rho ^{2}\left( \alpha _{0},B\right) \frac{\log ^{2}B}{%
(B^{2}-1)}\text{ ;}
\end{equation*}%
as claimed.
\end{proof}

The following Lemma regards instead the behaviour of $Q_{J_{L}}(\alpha )$.

\begin{lemma}
\label{LemmaS1}Under Condition \ref{REGULNEED}, we have:%
\begin{equation*}
Q_{J_{L}}\left( \overline{\alpha }_{L}\right) \longrightarrow _{p}\frac{%
B^{2}\log ^{2}B}{\left( B^{2}-1\right) ^{2}}\text{ .}
\end{equation*}
\end{lemma}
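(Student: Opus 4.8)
The plan is to determine the deterministic limit of $Q_{J_{L}}(\alpha)$ for $\alpha$ in a fixed compact neighbourhood of $\alpha_{0}$, to show that the convergence is uniform in probability there, and then to invoke the consistency of $\widehat{\alpha}_{J_{L}}$ established in Theorem \ref{consistenza}: since $\overline{\alpha}_{L}$ lies between $\alpha_{0}$ and $\widehat{\alpha}_{J_{L}}$, one has $\overline{\alpha}_{L}\rightarrow_{p}\alpha_{0}$, so it will be enough to prove that for every fixed $\alpha$ near $\alpha_{0}$
\[
Q_{J_{L}}(\alpha)\longrightarrow_{p}q(\alpha):=\frac{B^{2+\alpha-\alpha_{0}}\log^{2}B}{\bigl(B^{2+\alpha-\alpha_{0}}-1\bigr)^{2}}\, ,
\]
where $q$ is continuous and $q(\alpha_{0})=B^{2}\log^{2}B/(B^{2}-1)^{2}$, the asserted value.

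First I would start from
\[
Q_{J_{L}}(\alpha)=\frac{\widehat{G}_{2}(\alpha)\widehat{G}(\alpha)-\widehat{G}_{1}(\alpha)^{2}}{\widehat{G}(\alpha)^{2}}+\frac{1}{\sum_{j}N_{j}}\sum_{j=1}^{J_{L}}N_{j}\,\frac{K_{j,2}(\alpha)K_{j}(\alpha)-K_{j,1}(\alpha)^{2}}{K_{j}(\alpha)^{2}}
\]
and replace the random "$\widehat{G}$-part" by its deterministic counterpart. By Lemma \ref{lemmagiovneed}, $\widehat{G}_{n}(\alpha)/G_{n}(\alpha)\rightarrow_{p}1$ uniformly in $\alpha$; inspecting that proof shows the relative error is in fact $O_{p}(J_{L}B^{-J_{L}})$. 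Writing $\widehat{G}_{n}/\widehat{G}=(G_{n}/G)\bigl(1+O_{p}(J_{L}B^{-J_{L}})\bigr)$ and noting $G_{1}/G=O(J_{L}\log B)$ and $G_{2}/G=O(J_{L}^{2}\log^{2}B)$, the error incurred in the difference $\widehat{G}_{2}/\widehat{G}-(\widehat{G}_{1}/\widehat{G})^{2}$ is $O_{p}(J_{L}^{3}B^{-J_{L}})=o_{p}(1)$, uniformly on the neighbourhood of $\alpha_{0}$. Hence $Q_{J_{L}}(\alpha)$ coincides, up to a uniform $o_{p}(1)$, with the deterministic quantity obtained on substituting $G_{n}$ for $\widehat{G}_{n}$.

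For that deterministic quantity I would set $a_{j}(\alpha)=-K_{j,1}(\alpha)/K_{j}(\alpha)$ and $V_{j}(\alpha)=K_{j,2}(\alpha)/K_{j}(\alpha)-a_{j}(\alpha)^{2}$, so that (recalling Definition \ref{Kfunctions}) $U_{j,1}=a_{j}$, $U_{j,2}=a_{j}^{2}-V_{j}$, and the $K$-summand equals precisely $V_{j}$. Denoting by $\mathbb{E}_{w}$ the average against the weights $w_{j}\propto N_{j}K_{j}(\alpha_{0})/K_{j}(\alpha)$ one has $G_{n}/G=\mathbb{E}_{w}[U_{j,n}]$, whence
\[
\frac{G_{2}G-G_{1}^{2}}{G^{2}}=\mathbb{E}_{w}[U_{j,2}]-\bigl(\mathbb{E}_{w}[U_{j,1}]\bigr)^{2}=\mathrm{Var}_{w}(a_{j})-\mathbb{E}_{w}[V_{j}]\, ,
\]
while the $K$-term equals $\mathbb{E}_{w'}[V_{j}]$ with $w'_{j}\propto N_{j}$. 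By Proposition \ref{propKj} and Corollary \ref{Kjcoroll}, $V_{j}(\alpha)$ and $a_{j}(\alpha)-j\log B$ converge to finite limits with geometric rate; since both $w$ and $w'$ are geometric-type weight sequences concentrated at $j=J_{L}$, this gives $\mathbb{E}_{w}[V_{j}]-\mathbb{E}_{w'}[V_{j}]\to 0$ (so the two $V_{j}$ contributions cancel) and $\mathrm{Var}_{w}(a_{j})=\log^{2}B\cdot\mathrm{Var}_{w}(j)+o(1)$. Finally, with $N_{j}=c_{B}B^{2j}$ and $K_{j}(\alpha_{0})/K_{j}(\alpha)=B^{(\alpha-\alpha_{0})j}I(B,\alpha_{0},\alpha)(1+o_{j}(1))$ from Proposition \ref{propKj}, the weights satisfy $w_{j}\propto B^{sj}$ with $s=2+\alpha-\alpha_{0}>0$, and Proposition \ref{propsumB} applied to $\sum_{j}j^{r}B^{sj}$ gives $\mathrm{Var}_{w}(j)\to B^{s}/(B^{s}-1)^{2}$. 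Collecting the pieces yields $Q_{J_{L}}(\alpha)\to_{p}q(\alpha)$.

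The main difficulty is the cancellation of the $O(J_{L}^{2}\log^{2}B)$ leading (and $O(J_{L}\log^{2}B)$ sub-leading) behaviour down to an $O(1)$ quantity, which happens at two stages: inside $\widehat{G}_{2}/\widehat{G}-(\widehat{G}_{1}/\widehat{G})^{2}$, where one relies on the exponentially small rate hidden in the proof of Lemma \ref{lemmagiovneed}, and between the $\mathbb{E}_{w}[V_{j}]$ term from the $\widehat{G}$-part and the deterministic $K$-term, where one uses the geometric remainders from Proposition \ref{propKj}. A secondary, routine point is that $Q_{J_{L}}(\alpha)\to_{p}q(\alpha)$ must hold uniformly on the neighbourhood of $\alpha_{0}$ so that it can be evaluated at the random argument $\overline{\alpha}_{L}$; this follows from the uniformity already present in Lemma \ref{lemmagiovneed} and the continuity of $q$.
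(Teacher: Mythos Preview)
Your proposal is correct and follows essentially the same route as the paper: replace $\widehat{G}_{n}$ by $G_{n}$ via Lemma~\ref{lemmagiovneed}, compute the deterministic limit of the resulting expression for general $\alpha$, and then plug in $\overline{\alpha}_{L}\to_{p}\alpha_{0}$. Your weighted-mean/variance formulation is algebraically the same decomposition the paper uses: with your notation, the paper's $Q_{1}(\alpha)$ is exactly $\mathrm{Var}_{w}(a_{j})$ and the paper's $Q_{2}(\alpha)+Q_{3}(\alpha)$ is exactly $\mathbb{E}_{w'}[V_{j}]-\mathbb{E}_{w}[V_{j}]$, so both of you are computing the same three pieces and arriving at the same limit $B^{s}\log^{2}B/(B^{s}-1)^{2}$ with $s=2+\alpha-\alpha_{0}$.

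Two small points where you are actually more careful than the paper. First, the paper simply invokes Lemma~\ref{lemmagiovneed} and passes directly from $\widehat{G}_{n}$ to $G_{n}$ without comment, whereas you correctly note that because $G_{1}/G=O(J_{L})$ and $G_{2}/G=O(J_{L}^{2})$, a bare $o_{p}(1)$ relative error is not enough to control the cancellation in $\widehat{G}_{2}/\widehat{G}-(\widehat{G}_{1}/\widehat{G})^{2}$; extracting the exponential rate $O_{p}(J_{L}B^{-J_{L}})$ from the proof of Lemma~\ref{lemmagiovneed} (as you do) is what makes this step rigorous. Second, you flag the need for uniformity in $\alpha$ before evaluating at the random $\overline{\alpha}_{L}$; the paper relies on this implicitly.
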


\begin{proof}
By using \ref{lemmagiovneed}, we obtain:

\begin{equation*}
Q_{J_{L}}\left( \alpha \right) =\frac{G_{2}\left( \alpha \right) G\left(
\alpha \right) -\left( G_{1}\left( \alpha \right) \right) ^{2}}{\left(
G\left( \alpha \right) \right) ^{2}}+\frac{1}{\sum_{j}N_{j}}\sum_{j}N_{j}%
\frac{K_{j,2}\left( \alpha \right) K_{j}\left( \alpha \right) -\left(
K_{j,1}\left( \alpha \right) \right) ^{2}}{\left( K_{j}\left( \alpha \right)
\right) ^{2}}
\end{equation*}%
\begin{equation*}
=\frac{\left( \sum_{j}N_{j}\frac{K_{j}\left( \alpha _{0}\right) }{%
K_{j}\left( \alpha \right) }\left( 2\left( \frac{K_{j,1}\left( \alpha
\right) }{K_{j}\left( \alpha \right) }\right) ^{2}-\frac{K_{j,2}\left(
\alpha \right) }{K_{j}\left( \alpha \right) }\right) \right) \left(
\sum_{j}N_{j}\frac{K_{j}\left( \alpha _{0}\right) }{K_{j}\left( \alpha
\right) }\right) }{\left( \sum_{j}N_{j}\frac{K_{j}\left( \alpha _{0}\right)
}{K_{j}\left( \alpha \right) }\right) ^{2}}
\end{equation*}%
\begin{equation*}
-\frac{\left( \sum_{j}N_{j}\frac{K_{j}\left( \alpha _{0}\right) }{%
K_{j}\left( \alpha \right) }\left( -\frac{K_{j,1}\left( \alpha \right) }{%
K_{j}\left( \alpha \right) }\right) \right) ^{2}}{\left( \sum_{j}N_{j}\frac{%
K_{j}\left( \alpha _{0}\right) }{K_{j}\left( \alpha \right) }\right) ^{2}}+%
\frac{1}{\sum_{j}N_{j}}\sum_{j}N_{j}\frac{K_{j,2}\left( \alpha \right)
K_{j}\left( \alpha \right) -\left( K_{j,1}\left( \alpha \right) \right) ^{2}%
}{\left( K_{j}\left( \alpha \right) \right) ^{2}}\text{ .}
\end{equation*}%
$Q_{J_{L}}\left( \alpha \right) $ can be rewritten as the sum of three terms:%
\begin{equation*}
Q_{J_{L}}\left( \alpha \right) =Q_{1}\left( \alpha \right) +Q_{2}\left(
\alpha \right) +Q_{3}\left( \alpha \right) \text{ ,}
\end{equation*}%
where:%
\begin{equation*}
Q_{1}\left( \alpha \right) =\frac{Q_{1}^{num}\left( \alpha \right) }{%
Q_{1}^{den}\left( \alpha \right) }
\end{equation*}%
\begin{equation*}
=\frac{\left( \sum_{j}N_{j}\frac{K_{j}\left( \alpha _{0}\right) }{%
K_{j}\left( \alpha \right) }\left( \frac{K_{j,1}\left( \alpha \right) }{%
K_{j}\left( \alpha \right) }\right) ^{2}\right) \left( \sum_{j}N_{j}\frac{%
K_{j}\left( \alpha _{0}\right) }{K_{j}\left( \alpha \right) }\right) -\left(
\sum_{j}N_{j}\frac{K_{j}\left( \alpha _{0}\right) }{K_{j}\left( \alpha
\right) }\left( -\frac{K_{j,1}\left( \alpha \right) }{K_{j}\left( \alpha
\right) }\right) \right) ^{2}}{\left( \sum_{j}N_{j}\frac{K_{j}\left( \alpha
_{0}\right) }{K_{j}\left( \alpha \right) }\right) ^{2}}\text{ ,}
\end{equation*}%
\begin{equation*}
Q_{2}\left( \alpha \right) =\frac{Q_{2}^{num}\left( \alpha \right) }{%
Q_{2}^{den}\left( \alpha \right) }
\end{equation*}%
\begin{equation*}
=\frac{\left( \sum_{j}N_{j}\right) \left( \sum_{j}N_{j}\frac{K_{j}\left(
\alpha _{0}\right) }{K_{j}\left( \alpha \right) }\left( \frac{K_{j,1}\left(
\alpha \right) }{K_{j}\left( \alpha \right) }\right) ^{2}\right) -\left(
\sum_{j}N_{j}\frac{K_{j}\left( \alpha _{0}\right) }{K_{j}\left( \alpha
\right) }\right) \left( \sum_{j}N_{j}\left( \frac{K_{j,1}\left( \alpha
\right) }{K_{j}\left( \alpha \right) }\right) ^{2}\right) }{\left(
\sum_{j}N_{j}\frac{K_{j}\left( \alpha _{0}\right) }{K_{j}\left( \alpha
\right) }\right) \sum_{j}N_{j}}\text{ ,}
\end{equation*}%
\begin{equation*}
Q_{3}\left( \alpha \right) =\frac{Q_{3}^{num}\left( \alpha \right) }{%
Q_{2}^{den}\left( \alpha \right) }
\end{equation*}%
\begin{equation*}
=\frac{\left( \sum_{j}N_{j}\frac{K_{j}\left( \alpha _{0}\right) }{%
K_{j}\left( \alpha \right) }\right) \left( \sum_{j}N_{j}\frac{K_{j,2}\left(
\alpha \right) }{K_{j}\left( \alpha \right) }\right) -\left(
\sum_{j}N_{j}\right) \left( \sum_{j}N_{j}\frac{K_{j}\left( \alpha
_{0}\right) }{K_{j}\left( \alpha \right) }\frac{K_{j,2}\left( \alpha \right)
}{K_{j}\left( \alpha \right) }\right) }{\left( \sum_{j}N_{j}\frac{%
K_{j}\left( \alpha _{0}\right) }{K_{j}\left( \alpha \right) }\right)
\sum_{j}N_{j}}\text{ .}
\end{equation*}

The next step consists in showing that:
\begin{equation*}
Q_{2}\left( \alpha \right) +Q_{3}\left( \alpha \right) =o_{J_{L}}(1)\text{ .}
\end{equation*}

Using Corollary \ref{Kjcoroll}, $Q_{2}^{num}\left( \alpha \right) $ can be
written as:%
\begin{equation*}
Q_{2}^{num}\left( \alpha \right)
\end{equation*}%
\begin{eqnarray*}
&=&\left( \sum_{j}N_{j}\right) \left( \sum_{j}N_{j}\frac{K_{j}\left( \alpha
_{0}\right) }{K_{j}\left( \alpha \right) }\left( \log ^{2}B^{j}+2\frac{%
I_{1}\left( B\right) }{I_{0}\left( B\right) }\log B^{j}+\left( \frac{%
I_{1}\left( B\right) }{I_{0}\left( B\right) }\right)
^{2}+o_{J_{L}}(1)\right) \right) \\
&&-\left( \sum_{j}N_{j}\frac{K_{j}\left( \alpha _{0}\right) }{K_{j}\left(
\alpha \right) }\right) \left( \sum_{j}N_{j}\left( \log ^{2}B^{j}+2\frac{%
I_{1}\left( B\right) }{I_{0}\left( B\right) }\log B^{j}+\left( \frac{%
I_{1}\left( B\right) }{I_{0}\left( B\right) }\right)
^{2}+o_{J_{L}}(1)\right) \right) \text{ ,}
\end{eqnarray*}

while $Q_{3}^{num}\left( \alpha \right) \,$becomes:%
\begin{equation*}
Q_{3}^{num}\left( \alpha \right)
\end{equation*}%
\begin{eqnarray*}
&=&\left( \sum_{j}N_{j}\frac{K_{j}\left( \alpha _{0}\right) }{K_{j}\left(
\alpha \right) }\right) \left( \sum_{j}N_{j}\left( \log B^{2j}+2\frac{%
I_{1}\left( B\right) }{I_{0}\left( B\right) }\log B^{j}+\frac{I_{2}\left(
B\right) }{I_{0}\left( B\right) }+o_{J_{L}}(1)\right) \right) \\
&&-\left( \sum_{j}N_{j}\right) \left( \sum_{j}N_{j}\frac{K_{j}\left( \alpha
_{0}\right) }{K_{j}\left( \alpha \right) }\left( \log B^{2j}+2\frac{%
I_{1}\left( B\right) }{I_{0}\left( B\right) }\log B^{j}+\frac{I_{2}\left(
B\right) }{I_{0}\left( B\right) }+o_{J_{L}}(1)\right) \right) \text{ ,}
\end{eqnarray*}%
so that:%
\begin{equation*}
\frac{Q_{2}^{num}\left( \alpha \right) +Q_{3}^{num}\left( \alpha \right) }{%
Q_{2}^{den}\left( \alpha \right) }=o_{J_{L}}(1)\text{ .}
\end{equation*}

It remains to study $Q_{2}^{den}\left( \alpha \right) ;$ by using (\ref%
{propKj}) and (\ref{propsumB}), we have:%
\begin{equation*}
\lim_{J_{L}\rightarrow \infty }\frac{1}{B^{2\left( 2+\frac{\alpha -a_{0}}{2}%
\right) J_{L}}}\left( \sum_{j}N_{j}\frac{K_{j}\left( \alpha _{0}\right) }{%
K_{j}\left( \alpha \right) }\right) \left( \sum_{j}N_{j}\right)
\end{equation*}%
\begin{eqnarray*}
&=&\lim_{J_{L}\rightarrow \infty }\frac{c_{B}^{2}I\left( B,\alpha
_{0},\alpha \right) ^{2}}{B^{2\left( 2+\frac{\alpha -a_{0}}{2}\right) J_{L}}}%
\left( \sum_{j}B^{2j\left( 1+\frac{\alpha -\alpha _{0}}{2}\right) }\right)
\left( \sum_{j}B^{2j}\right) \\
&=&c_{B}^{2}I\left( B,\alpha _{0},\alpha \right) ^{2}\frac{B^{2\left( 1+%
\frac{\alpha -a_{0}}{2}\right) }}{B^{2\left( 1+\frac{\alpha -a_{0}}{2}%
\right) }-1}\frac{B^{2}}{B^{2}-1}>0\text{ .}
\end{eqnarray*}

Finally, we prove that $Q_{1}\left( \overline{\alpha }_{L}\right)
\rightarrow _{p}\frac{B^{2}\log ^{2}B}{\left( B^{2}-1\right) ^{2}}$. Using
Corollary \ref{Kjcoroll}, we write the numerator $Q_{1}^{num}\left( \alpha
\right) $ as:%
\begin{equation*}
Q_{1}^{num}\left( \alpha \right)
\end{equation*}%
\begin{eqnarray*}
&=&\left( \sum_{j}\frac{K_{j}\left( \alpha _{0}\right) }{K_{j}\left( \alpha
\right) }N_{j}\right) \left( \sum_{j}N_{j}\frac{K_{j}\left( \alpha
_{0}\right) }{K_{j}\left( \alpha \right) }\left( \log B^{j}+\frac{%
I_{1}\left( B\right) }{I_{0}\left( B\right) }\right) ^{2}\right) \\
&&-\left( \sum_{j}N_{j}\frac{K_{j}\left( \alpha _{0}\right) }{K_{j}\left(
\alpha \right) }\left( \log B^{j}+\frac{I_{1}\left( B\right) }{I_{0}\left(
B\right) }\right) \right) ^{2}
\end{eqnarray*}
\begin{equation*}
=\left( \sum_{j}\frac{K_{j}\left( \alpha _{0}\right) }{K_{j}\left( \alpha
\right) }N_{j}\left( \sum_{j}N_{j}\frac{K_{j}\left( \alpha _{0}\right) }{%
K_{j}\left( \alpha \right) }\log ^{2}B^{j}\right) \right) -\left(
\sum_{j}N_{j}\frac{K_{j}\left( \alpha _{0}\right) }{K_{j}\left( \alpha
\right) }\log B^{j}\right) ^{2}
\end{equation*}

Let $s=2\left( 1+\frac{\alpha -a_{0}}{2}\right) $; by applying (\ref{ZJLlim}%
) we have:%
\begin{eqnarray*}
\lim_{J_{L}\rightarrow \infty }\frac{1}{B^{2sJ_{L}}}Q_{1}^{num}\left( \alpha
\right) &=&\lim_{J_{L}\rightarrow \infty }\frac{c_{B}^{2}I\left( B,\alpha
_{0},\alpha \right) ^{2}}{B^{2sJ_{L}}}Z_{J_{L}}\left( s\right) \\
&=&\log ^{2}B\frac{B^{3s}}{(B^{s}-1)^{4}}c_{B}^{2}I\left( B,\alpha
_{0},\alpha \right) ^{2}\text{ . }
\end{eqnarray*}

It remains to study $Q_{1}^{den}\left( \alpha \right) ;$ by using again (\ref%
{K0ratio}) and (\ref{propsumB}):%
\begin{eqnarray*}
\lim_{J_{L}\rightarrow \infty }\frac{1}{B^{2sJ_{L}}}Q_{1}^{den}\left( \alpha
\right) &=&\lim_{J_{L}\rightarrow \infty }\frac{c_{B}^{2}I\left( B,\alpha
_{0},\alpha \right) ^{2}}{B^{2sJ_{L}}}\left( \sum_{j}B^{sj}\right) ^{2} \\
&=&c_{B}^{2}I\left( B,\alpha _{0},\alpha \right) ^{2}\left( \frac{B^{s}}{%
B^{s}-1}\right) ^{2}\text{ .}
\end{eqnarray*}%
Hence%
\begin{equation*}
\lim_{J_{L}\rightarrow \infty }Q\left( \alpha \right) =\frac{B^{2\left( 1+%
\frac{\alpha -a_{0}}{2}\right) }\log ^{2}B}{\left( B^{2\left( 1+\frac{\alpha
-a_{0}}{2}\right) }-1\right) ^{2}}\text{ .}
\end{equation*}%
For the consistency of $\widehat{\alpha }_{L}$, for $\overline{\alpha }%
_{L}\in \left[ \alpha _{0}-\widehat{\alpha }_{L},\alpha _{0}+\widehat{\alpha
}_{L}\right] $, we have
\begin{equation*}
Q\left( \overline{\alpha }_{L}\right) \longrightarrow _{p}\frac{B^{2}\log
^{2}B}{\left( B^{2}-1\right) ^{2}}\text{ .}
\end{equation*}

\begin{equation*}
\lim_{J_{L}\rightarrow \infty }Var\left( B^{J_{L}}S_{J_{L}}\left( \alpha
_{0}\right) \right) =\rho ^{2}(\alpha _{0},B)\frac{\log ^{2}B}{(B^{2}-1)}%
\text{ .}
\end{equation*}
\end{proof}

To investigate the efficiency of needlet estimates, fix $B^{J_{L}}=L/B,$ so
that the highest frequency covers the multipoles $l=\left[ L/B^{2}\right]
+1,...,L;$ observe that, under Condition \ref{REGULNEED3}%
\begin{equation*}
B^{J_{L}+1}\left( \widehat{\alpha }_{J_{L}}-\alpha _{0}\right) =L\left(
\widehat{\alpha }_{J_{L}}-\alpha _{0}\right) \overset{d}{\longrightarrow }%
\mathcal{N}\left( 0,B^{2}\times D\left( B,\alpha _{0}\right) \right) \text{ ,%
}
\end{equation*}%
while parametric estimates based upon standard Fourier analysis (see \cite%
{dlm}) yield
\begin{equation*}
L\left( \widehat{\alpha }_{L}-\alpha _{0}\right) \overset{d}{\longrightarrow
}\mathcal{N}\left( 0,8\right) \,\text{\ .}
\end{equation*}%
For any given value of $B,$ the asymptotic variance $D\left( B,\alpha
_{0}\right) $ can be evaluated numerically by means of (\ref{neve}) and a
plug-in method, where $\alpha _{0}$ is replaced by its consistent estimate $%
\widehat{\alpha }_{J_{L}}$. In practice, though, this is not really needed
for the values of $B$ which are commonly in use, i.e. $B\simeq 1.1/1.5.$ In
fact, using $\log x\simeq x-1+o(1)$ as $x\rightarrow 1$ we have%
\begin{eqnarray*}
\lim_{B\rightarrow 1}\frac{B^{2}\Psi \left( B\right) }{B-1}
&=&\lim_{B\rightarrow 1}\frac{1}{B-1}\frac{\left( B^{2}-1\right) ^{3}}{%
B^{2}\left( B-1\right) ^{2}} \\
&=&\lim_{B\rightarrow 1}\frac{1}{B-1}\frac{\left( B+1\right) ^{3}\left(
B-1\right) }{B^{2}}=8\text{ .}
\end{eqnarray*}%
A standard choice for the function $b(.)$ (see \cite{bkmpAoS}, \cite%
{marpecbook}) is provided by
\begin{eqnarray*}
b^{2}(x) &=&b^{2}(x;B)=0\text{ , for }x\notin (\frac{1}{B},B)\text{ ,} \\
b^{2}(x) &=&1-\frac{\int_{-1}^{(1-\frac{2B}{B-1}(x-\frac{1}{B}))}\exp (-%
\frac{1}{1-u^{2}})du}{\int_{-1}^{1}\exp (-\frac{1}{1-u^{2}})du}\text{ , for }%
\frac{1}{B}\leq x\leq 1\text{ ,} \\
\text{ }b^{2}(x) &=&\frac{\int_{-1}^{(1-\frac{2B}{B-1}(\frac{x}{B}-\frac{1}{B%
}))}\exp (-\frac{1}{1-u^{2}})du}{\int_{-1}^{1}\exp (-\frac{1}{1-u^{2}})du}%
\text{ , for }1\leq x\leq B\text{ .}
\end{eqnarray*}%
For this choice of $b(.),$ analytical and numerical approximations allow to
show that%
\begin{equation*}
\lim_{B\rightarrow 1}(B-1)\rho ^{2}\left( \alpha ;B\right) =1\text{ ,}
\end{equation*}%
whence
\begin{equation*}
\lim_{B\rightarrow 1}D\left( B,\alpha _{0}\right) =\lim_{B\rightarrow
1}\left( \frac{\left( B+1\right) ^{2}+2B\left( B+1\right) }{B^{3}}%
+o_{B}\left( 1\right) \right) =8\text{ .}
\end{equation*}%
Summing up, the variance of the needlet likelihood estimator is very close
to the "optimal" value (e.g. 8) which was found by \cite{dlm} for the
Fourier-based method. Some numerical results to validate this claim are
provided in Table 1 for a range of values of $B$ and $\alpha _{0}$. These
numerical results are confirmed with remarkable accuracy by the Monte Carlo
evidence reported in Section \ref{numerical} below.
\begin{equation*}
\begin{tabular}{|c|c|c|c|c|c|c|c|c|c|c|c|c|}
\hline
$B$ & \multicolumn{3}{|c|}{$\sqrt[8]{2}$} & \multicolumn{3}{|c|}{$\sqrt[4]{2}
$} & \multicolumn{3}{|c|}{$\sqrt[2]{2}$} & \multicolumn{3}{|c|}{$2$} \\
\hline
$\alpha _{0}$ & 2 & 3 & 4 & 2 & 3 & 4 & 2 & 3 & 4 & 2 & 3 & 4 \\ \hline
$\sigma ^{2}$ & 0.27 & 0.27 & 0.27 & 0.53 & 054 & 0.54 & 1.15 & 1.16 & 1.16
& 2.09 & 2.10 & 2.10 \\ \hline
$\tau ^{2}$ & 0.04 & 0.05 & 0.05 & 0.13 & 0.13 & 0.13 & 0.44 & 0.44 & 0.44 &
0.58 & 0.58 & 0.58 \\ \hline
$I_{0}$ & 0.17 & 0.17 & 0.17 & 0.35 & 0.35 & 0.35 & 0.70 & 0.70 & 0.70 & 1.39
& 1.39 & 1.39 \\ \hline
$\rho ^{2}$ & 8.46 & 8.48 & 8.50 & 5.00 & 5.04 & 5.09 & 2.58 & 2.61 & 2.63 &
1.40 & 1.41 & 1.43 \\ \hline
$\Psi $ & \multicolumn{3}{|c|}{0.75} & \multicolumn{3}{|c|}{1.18} &
\multicolumn{3}{|c|}{2.08} & \multicolumn{3}{|c|}{3.51} \\ \hline
$B^{2}D$ & 7.67 & 7.69 & 7.70 & 8.36 & 8.43 & 8.52 & 10.7 & 10.8 & 10.9 &
20.6 & 20.8 & 20.9 \\ \hline
\end{tabular}%
\end{equation*}

\begin{center}
Table 1: Some deterministic results for different values of $B$ and $\alpha
_{0}$.
\end{center}

\begin{remark}
It is shown in \cite{bkmpAoS} how needlet coefficients are asymptotically
unaffected by the presence of masked or unobserved regions, provide they are
centred outside the mask. It is then possible to argue that the asymptotic
results presented here remain unaltered in case of a partially observed
sphere, up to a normalization factor representing the so-called sky
fraction, i.e. the effective number of available observations. This is a
major advantage when compare to standard Fourier analysis techniques - in
the latter case, asymptotic theory can no longer be entertained in the case
of partial observations. Again, for brevity's sake we do not develop a
formal argument here; proofs, however, can be routinely performed starting
from the inequality%
\begin{equation*}
\frac{\mathbb{E}\left\{ \beta _{jk}-\beta _{jk}^{\ast }\right\} ^{2}}{%
\mathbb{E}\beta _{jk}^{2}}\leq \frac{C_{M}}{\left\{ 1+B^{j}d(\xi
_{jk},G)\right\} ^{M}}\text{ ,}
\end{equation*}%
valid for every integer $M>0,$ some constant $C_{M}>0,$ for $G$ denoting the
unobserved region, see again \cite{bkmpAoS},\cite{bkmpBer}.
\end{remark}

\section{Narrow-band estimates \label{narrow}}

As discussed in the previous Section, under Condition \ref{REGULNEED2},
asymptotic inference is made impossible by the presence of the nuisance
parameter $m.$ It is possible to get rid of this parameter, however, by
considering narrow-band estimates focussing only on the higher tail of the
power spectrum. The details are similar to the approach pursued in analogous
circumstances in \cite{dlm}. We start from the following

\begin{definition}
The \emph{Narrow-Band Needlet Whittle estimator }for the parameters $%
\vartheta =(\alpha ,G)$ is provided by%
\begin{equation*}
(\widehat{\alpha }_{J_{L};J_{1}},\widehat{G}_{J_{L};J_{1}}):=\arg
\min_{\alpha ,G}\sum_{j=J_{1}}^{J_{L}}\left[ \frac{\sum_{k}\beta _{jk}^{2}}{%
GK_{j}\left( \alpha \right) }-\sum_{k=1}^{N_{j}}\log \left( \frac{\beta
_{jk}^{2}}{GK_{j}\left( \alpha \right) }\right) \right] \text{ ,}
\end{equation*}%
or equivalently:%
\begin{eqnarray}
\widehat{\alpha }_{J_{L};J_{1}} &=&\arg \min_{\alpha }R_{J_{L};J_{1}}(\alpha
,\widehat{G}(\alpha )),  \label{narrowest} \\
R_{J_{L};J_{1}}(\alpha ) &=&\left( \log \widehat{G}_{J_{L};J_{1}}(\alpha )+%
\frac{1}{\sum_{j=J_{1}}^{J_{L}}N_{j}}\sum_{J_{1}=J_{1}}^{J_{L}}N_{j}\log
K_{j}\left( \alpha \right) \right) \text{ ,}  \notag
\end{eqnarray}%
where $J_{1}<J_{L}$ is chosen such that $B^{J_{L}}-B^{J_{1}}\rightarrow
\infty $ and%
\begin{equation}
B^{J_{1}}=B^{J_{L}}\left( 1-g\left( J_{L}\right) \right) \text{ , }%
J_{1}=J_{L}+\frac{\log \left( 1-g\left( J_{L}\right) \right) }{\log B}\text{
.}  \label{BJ1}
\end{equation}%
We choose $0<g\left( J_{L}\right) <1$ s.t. $\lim_{J_{L}\rightarrow \infty
}g\left( J_{L}\right) =0$ and $\lim_{J_{L}\rightarrow \infty
}J_{L}^{2}g^{3}\left( J_{L}\right) =0$ .
\end{definition}

For notational simplicity $B^{J_{1}}$ is defined as an integer (if this
isn't the case, modified arguments taking integer parts are completely
trivial). For definiteness, we can take for instance $g\left( J_{L}\right)
=J_{L}^{-3}$ .

\begin{theorem}
\label{theonarrowband}Let $\widehat{\alpha }_{J_{L};J_{1}}$ defined as in (%
\ref{narrowest}). Then under Condition \ref{REGULNEED2} we have
\end{theorem}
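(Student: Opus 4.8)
The plan is to follow the Mean Value Theorem route already used for Theorem~\ref{clt0}. First one establishes consistency of $\widehat{\alpha}_{J_L;J_1}$ exactly as in Theorem~\ref{consistenza}, replacing every $\sum_{j=1}^{J_L}$ by $\sum_{j=J_1}^{J_L}$ in Lemmas~\ref{consistency1}--\ref{consistency2} and in the surrounding argument; restricting to the high-frequency scales $J_1\le j\le J_L$ only makes the lower bound $\delta_\varepsilon$ of Lemma~\ref{consistency1} easier to obtain, since the contaminating low-frequency behaviour of $C_l$ is no longer involved. Given consistency, there is $\overline{\alpha}_{J_L;J_1}$ lying between $\widehat{\alpha}_{J_L;J_1}$ and $\alpha_0$ such that, with probability one,
\[
\widehat{\alpha}_{J_L;J_1}-\alpha_0=-\frac{S_{J_L;J_1}(\alpha_0)}{Q_{J_L;J_1}(\overline{\alpha}_{J_L;J_1})}\text{ ,}
\]
where $S_{J_L;J_1}$ and $Q_{J_L;J_1}$ are the narrow-band score and Hessian, i.e.\ the expressions for $S_{J_L}$ and $Q_{J_L}$ from Section~\ref{clt} with all sums $\sum_{j=1}^{J_L}$ (and the normalisation $\sum_j N_j$) replaced by their restrictions to $j\in\{J_1,\dots,J_L\}$.

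Next one controls the denominator. Repeating the decomposition $Q_{J_L;J_1}=Q_1+Q_2+Q_3$ of Lemma~\ref{LemmaS1}, using Corollary~\ref{Kjcoroll} and Proposition~\ref{propsumB}, one checks that $Q_2+Q_3=o_{J_L}(1)$ and evaluates $Q_1$. Since the geometric partial sums $\sum_{j=J_1}^{J_L}B^{sj}$, $\sum_{j=J_1}^{J_L}B^{2j}\log^2 B^{j}$, etc.\ are dominated by their top term $j=J_L$, replacing the lower summation limit $1$ by $J_1$ alters only lower-order contributions, so $Q_{J_L;J_1}(\overline{\alpha}_{J_L;J_1})\longrightarrow_p \tfrac{B^{2}\log^{2}B}{(B^{2}-1)^{2}}$, exactly as in Lemma~\ref{LemmaS1}.

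The crux is the numerator, and in particular the disappearance of the bias. Write $S_{J_L;J_1}(\alpha_0)=\tfrac{G_0}{\widehat{G}(\alpha_0)}\,\overline{S}_{J_L;J_1}(\alpha_0)$ with $\overline{S}_{J_L;J_1}$ the narrow-band analogue of $\overline{S}_{J_L}$ from Lemma~\ref{lemmaS0}, and recall $G_0/\widehat{G}(\alpha_0)\to_p 1$ by Lemma~\ref{lemmagiovneed}. Under Condition~\ref{REGULNEED2} the bias of $\overline{S}_{J_L;J_1}$ is driven by the correction $\kappa l^{-1}$ in $G(l)/G_0$; propagated through the $K_j$'s and through the centering induced by concentrating out $G$, it produces, as in the full-band case, a term proportional to
\[
\frac{1}{\sum_{j=J_1}^{J_L}N_j}\sum_{j=J_1}^{J_L}N_j\,\log B^{j}\left(\frac{\mathbb{E}\sum_k\beta_{jk}^2}{N_j\,G_0\,K_j(\alpha_0)}-\mathbb{E}\frac{\widehat{G}(\alpha_0)}{G_0}\right)\text{ .}
\]
The key observation is that for the retained scales one has $B^{-j}=B^{-J_L}\bigl(1+O(g(J_L))\bigr)$ uniformly in $j\in\{J_1,\dots,J_L\}$, so this weighted deviation is $O\!\bigl(\kappa\,B^{-J_L}g(J_L)\bigr)$ times bounded factors; multiplying by the normalising rate and using $\lim_{J_L\to\infty}J_L^{2}g(J_L)=0$ forces $(\text{rate})\cdot\mathbb{E}\bigl(\overline{S}_{J_L;J_1}(\alpha_0)\bigr)\to 0$, which is precisely what eliminates the nuisance parameter $m$ present in Theorem~\ref{clt0}(b). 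I expect this step --- making rigorous the claim that narrowing the band kills the bias --- to be the main obstacle; the remainder is bookkeeping.

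Finally one computes $\mathrm{Var}\bigl(\overline{S}_{J_L;J_1}(\alpha_0)\bigr)$ by the $A+B+C$ decomposition of Lemma~\ref{lemmaS0}, invoking Lemma~\ref{gavarini}, Proposition~\ref{propKj} and Corollary~\ref{sumcorollary} with the restricted sums, which identifies the limiting variance; and one establishes asymptotic Gaussianity by noting that $\overline{S}_{J_L;J_1}(\alpha_0)$ belongs to the second Wiener chaos of the underlying Gaussian measure and that its fourth cumulant, bounded as in Lemma~\ref{cumulants}, is negligible relative to the square of its variance, so the Nourdin--Peccati fourth-moment theorem (see \cite{nourdinpeccati}) applies. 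Combining the three steps with Slutzky's lemma yields the stated centred asymptotic normality of $\widehat{\alpha}_{J_L;J_1}$.
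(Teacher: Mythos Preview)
Your overall architecture (consistency, mean-value expansion, score/Hessian analysis, fourth-moment CLT) matches the paper's, but there is a genuine gap in your treatment of $Q_{J_L;J_1}$. You argue that because the geometric sums $\sum_{j=J_1}^{J_L}B^{sj}$, $\sum_{j=J_1}^{J_L}B^{sj}\log B^{j}$, etc.\ are dominated by their top term $j=J_L$, restricting the lower limit from $1$ to $J_1$ only perturbs lower-order contributions, and conclude $Q_{J_L;J_1}(\overline{\alpha})\to_p \tfrac{B^{2}\log^{2}B}{(B^{2}-1)^{2}}$, the same constant as in Lemma~\ref{LemmaS1}. This is incorrect. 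The leading part of $Q_1$ is the ratio of
\[
Z_{J_L;J_1}(s)=\Bigl(\sum_{j=J_1}^{J_L}B^{sj}\Bigr)\Bigl(\sum_{j=J_1}^{J_L}B^{sj}\log^{2}B^{j}\Bigr)-\Bigl(\sum_{j=J_1}^{J_L}B^{sj}\log B^{j}\Bigr)^{2}
\]
to $\bigl(\sum_{j=J_1}^{J_L}B^{sj}\bigr)^{2}$. Each individual sum is indeed dominated by $j=J_L$, but $Z_{J_L;J_1}(s)$ is a weighted variance of $j\mapsto\log B^{j}$, and the leading terms \emph{cancel}: on the narrow band $[J_1,J_L]$ the variable $\log B^{j}$ is nearly constant. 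Applying Corollary~\ref{sumcorollary} (equation~(\ref{ZJLJ1})) with $B^{J_1}=B^{J_L}(1-g(J_L))$ gives $Z_{J_L;J_1}(s)=B^{2sJ_L}\Phi(B,s)\,g(J_L)+O\bigl(B^{2sJ_L}g(J_L)^{2}\bigr)$, whence $Q_{J_L;J_1}(\overline{\alpha})=\Phi(B)\,g(J_L)+o(g(J_L))\to 0$, not to a positive constant. Exactly the same cancellation controls the score variance, yielding $\mathrm{Var}\bigl(\overline{S}_{J_L;J_1}(\alpha_0)\bigr)=\rho^{2}(\alpha_0,B)\,\Phi(B)\,g(J_L)\,B^{-2J_L}$. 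It is this simultaneous degeneration of Hessian and score variance at rate $g(J_L)$ that produces the slower normalising rate $g(J_L)^{1/2}B^{J_L}$ and the limiting variance $\rho^{2}(\alpha_0,B)/\Phi(B)$ in the theorem; with your claimed constant limit for $Q$ you would obtain the wrong rate and the wrong asymptotic variance.

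Your bias heuristic (``$B^{-j}=B^{-J_L}(1+O(g(J_L)))$ on the retained scales'') points in the right direction, but the argument only closes once the correct scaling of $Q_{J_L;J_1}$ is in place: one has to check that $g(J_L)^{1/2}B^{J_L}\,\mathbb{E}\bigl(\overline{S}_{J_L;J_1}(\alpha_0)\bigr)/Q_{J_L;J_1}(\overline{\alpha})=O\bigl(J_L\,g(J_L)^{1/2}\bigr)\to 0$, which is precisely what the assumption $J_L^{2}g(J_L)\to 0$ delivers. Without identifying that the Hessian is of order $g(J_L)$ rather than of order one, the bias--variance bookkeeping cannot be completed.
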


\begin{equation*}
g\left( J_{L}\right) ^{\frac{1}{2}}B^{J_{L}}\left( \widehat{\alpha }%
_{L;L_{1}}-\alpha _{0}\right) \overset{d}{\longrightarrow }\mathcal{N}\left(
0,\frac{\rho ^{2}(\alpha _{0},B)}{\Phi \left( B\right) }\right) \text{ ,}
\end{equation*}%
where%
\begin{equation*}
\Phi \left( B\right) =\log ^{2}B\frac{B^{2}}{\left( B^{2}-1\right) ^{2}}%
\left( \frac{4}{\left( B^{2}-1\right) }+2\left( \frac{\log B-1}{\log B}%
\right) \right) \text{ .}
\end{equation*}

\begin{proof}
Because the proof is very similar to the full band case, we put in evidence
here just the main differences. Consider:%
\begin{eqnarray*}
S_{J_{L};J_{1}}\left( \alpha \right)  &=&\frac{d}{d\alpha }%
R_{J_{L};J_{1}}\left( \alpha \right) \text{ ;} \\
Q_{J_{L};J_{1}}\left( \alpha \right)  &=&\frac{d^{2}}{d\alpha ^{2}}%
R_{J_{L};J_{1}}\left( \alpha \right) \text{ .}
\end{eqnarray*}%
Let
\begin{eqnarray*}
\overline{S}_{J_{L};J_{1}}\left( \alpha _{0}\right)
&=&S_{J_{L};J_{1}}\left( \alpha _{0}\right) \frac{G_{0}}{\widehat{G}\left(
\alpha _{0}\right) } \\
&=&\frac{-1}{\sum_{j=J_{1}}^{J_{L}}N_{j}}\sum_{j=J_{1}}^{J_{L}}\frac{%
K_{j,1}\left( \alpha _{0}\right) }{K_{j}\left( \alpha _{0}\right) }%
\sum_{k=1}^{N_{j}}\left( \frac{\beta _{jk}^{2}}{G\left( \alpha _{0}\right)
K_{j}\left( \alpha _{0}\right) }-\frac{\widehat{G}\left( \alpha _{0}\right)
}{G\left( \alpha _{0}\right) }\right) \text{ }.
\end{eqnarray*}%
We have:%
\begin{equation*}
\lim_{J_{L}\rightarrow \infty }\frac{B^{J_{L}}}{J_{L}g\left( J_{L}\right) }%
\mathbb{E}\left( \overline{S}_{J_{0},J_{L}}^{M}\left( \alpha _{0}\right)
\right)
\end{equation*}%
\begin{eqnarray*}
&=&\lim_{J_{L}\rightarrow \infty }\frac{B^{J_{L}}}{J_{L}g\left( J_{L}\right)
}I\left( B,\alpha _{0}+1,\alpha _{0}\right) \frac{\kappa }{%
\sum_{j=J_{1}}^{J_{L}}B^{2j}}\sum_{j=J_{1}}^{J_{L}}\log B^{j}\cdot
B^{2j}\left( B^{-j}-\frac{1}{\sum_{j=J_{1}}^{J_{L}}B^{2j}}%
\sum_{j=J_{1}}^{J_{L}}B^{j}\right) +o_{J_{L}}\left( 1\right)  \\
&=&\lim_{J_{L}\rightarrow \infty }\frac{B^{J_{L}}}{J_{L}g\left( J_{L}\right)
}\kappa I\left( B,\alpha _{0}+1,\alpha _{0}\right)
\sum_{j=J_{1}}^{J_{L}}\log B^{j}\cdot B^{2j}\left( B^{-j}-B^{-J_{L}}\left(
\frac{B-1}{B}+\frac{g\left( J_{L}\right) }{B}\right) \right)
+o_{J_{L}}\left( 1\right)  \\
&=&\lim_{J_{L}\rightarrow \infty }\frac{B^{-J_{L}}}{g\left( J_{L}\right) }%
\kappa I\left( B,\alpha _{0}+1,\alpha _{0}\right) \frac{B^{J_{L}}B\log B}{B-1%
}\left( J_{L}\left[ \left( \frac{B-1}{B}-\frac{B}{B+1}+\frac{1}{B\left(
B+1\right) }\right) +g\left( J_{L}\right) \left( \frac{1}{B}-\frac{2}{%
B\left( B+1\right) }\right) \right] \right)  \\
&=&\lim_{J_{L}\rightarrow \infty }\kappa I\left( B,\alpha _{0}+1,\alpha
_{0}\right) \frac{\log B}{B+1}+o_{J_{L}}\left( 1\right)
\end{eqnarray*}

Following (\ref{ZJLJ1}) and (\ref{VarSJ1}), we have
\begin{equation*}
Var\left( \overline{S}_{J_{L};J_{1}}\left( \alpha _{0}\right) \right) =\rho
^{2}(\alpha _{0},B)\frac{Z_{J_{L};J_{1}}\left( 2\right) }{\left(
\sum_{j=J_{1}}^{J_{L}}B^{2j}\right) ^{3}}\text{ .}
\end{equation*}

For (\ref{BJ1}), we have:%
\begin{equation*}
\frac{1}{B^{4J_{L}}}Z_{J_{L};J_{1}}\left( 2\right)
\end{equation*}%
\begin{eqnarray}
&=&\left( 1-\frac{\left( 1-g\left( J_{L}\right) \right) ^{2}}{B^{2}}\right)
^{2}-\frac{\left( B^{2}-1\right) ^{2}}{B^{4}}\left( 1-g\left( J_{L}\right)
\right) ^{2}\left( 1-\log _{B}\left( 1-g\left( J_{L}\right) \right) \right)
^{2}  \notag \\
&=&\left( \frac{B^{2}-1+2g\left( J_{L}\right) }{B^{2}}\right) ^{2}-\left(
\frac{B^{2}-1}{B^{2}}\right) ^{2}\left( 1-2g\left( J_{L}\right) \right)
\notag \\
&&\times \left( 1-\log _{B}\left( 1-g\left( J_{L}\right) \right) \right)
^{2}+O\left( g\left( J_{L}\right) ^{2}\right)  \notag \\
&=&\left( \frac{B^{2}-1}{B^{2}}\right) ^{2}\Delta Z_{J_{L};J_{1}}\left(
g\left( J_{L}\right) \right) +O\left( g\left( J_{L}\right) ^{2}\right)
\notag \\
&=&\left( \frac{B^{2}-1}{B^{2}}\right) ^{2}\left( \frac{4}{\left(
B^{2}-1\right) }+\left( 2-\frac{2}{\log B}\right) \right) g\left(
J_{L}\right) +O\left( B^{4J_{L}}g\left( J_{L}\right) ^{2}\right) \text{ ,}
\label{Zconti}
\end{eqnarray}

where%
\begin{eqnarray*}
\Delta Z_{J_{L};J_{1}}\left( g\left( J_{L}\right) \right) &=&\left( 1+\frac{%
4g\left( J_{L}\right) }{\left( B^{2}-1\right) }\right) -\left( 1-2g\left(
J_{L}\right) \right) \left( 1+\frac{1}{\log B}g\left( J_{L}\right) \right)
^{2} \\
&=&\left( 1+\frac{4g\left( J_{L}\right) }{\left( B^{2}-1\right) }\right)
-\left( 1-2g\left( J_{L}\right) \right) \left( 1+\frac{2}{\log B}g\left(
J_{L}\right) \right) \\
&=&\left( 1+\frac{4}{\left( B^{2}-1\right) }g\left( J_{L}\right) \right)
-\left( 1+\left( \frac{2}{\log B}-2\right) g\left( J_{L}\right) \right) \\
&=&\left( \frac{4}{\left( B^{2}-1\right) }+\left( 2-\frac{2}{\log B}\right)
\right) g\left( J_{L}\right)
\end{eqnarray*}

Thus we have%
\begin{equation*}
Z_{J_{L};J_{1}}\left( 2\right) =B^{4J_{L}}\Phi \left( B\right) g\left(
J_{L}\right) \text{ }+O\left( B^{4J_{L}}g\left( J_{L}\right) ^{2}\right)
\text{.}
\end{equation*}%
Note that $\Phi \left( B\right) >0$ for $B>1$.

On the other hand, simple calculations on Proposition \ref{propsumB} lead to%
\begin{eqnarray*}
\left( \sum_{j=J_{1}}^{J_{L}}B^{2j}\right) ^{3} &=&\frac{B^{6}}{\left(
B^{2}-1\right) ^{3}}\left( B^{2J_{L}}-B^{2\left( J_{1}-1\right) }\right)
^{3}+o\left( B^{6J_{L}}\right) \\
&=&\frac{B^{6}}{\left( B^{2}-1\right) ^{3}}B^{6J_{L}}\left( 1-B^{-2}\left(
1-g\left( J_{L}\right) \right) ^{2}\right) ^{3}+o\left( B^{6J_{L}}\right) \\
&=&\frac{B^{6}}{\left( B^{2}-1\right) ^{3}}B^{6J_{L}}\left( \frac{B^{2}-1}{%
B^{2}}\right) ^{3}+O\left( B^{6J_{L}}g\left( J_{L}\right) \right) \\
&=&B^{6J_{L}}\text{ }+O\left( B^{6J_{L}}g\left( J_{L}\right) \right) \text{ ,%
}
\end{eqnarray*}%
hence we have%
\begin{equation*}
Var\left( \overline{S}_{J_{L};J_{1}}\left( \alpha _{0}\right) \right) =\rho
^{2}(\alpha _{0},B)\Phi \left( B\right) g\left( J_{L}\right) B^{-2J_{L}}%
\text{ .}
\end{equation*}%
Consider now $Q_{J_{L};J_{1}}\left( \alpha \right) $, which we rewrite as%
\begin{equation*}
Q_{J_{L};J_{1}}\left( \alpha \right) =\frac{Q_{num}\left( \alpha \right) }{%
Q_{den}\left( \alpha \right) }\text{ .}
\end{equation*}%
Following a procedure similar to Lemma \ref{LemmaS1}, we have
\begin{equation*}
Q_{num}\left( \alpha \right) =c_{B}^{2}G_{0}^{2}I\left( B,\alpha _{0},\alpha
\right) Z_{J_{L};J_{1}}\left( s\right) \text{ ,}
\end{equation*}%
where $s=2\left( 1+\frac{\alpha -\alpha _{0}}{2}\right) $. Following (\ref%
{Zconti}), we obtain%
\begin{equation*}
Q_{num}\left( \alpha \right) =c_{B}^{2}G_{0}^{2}I\left( B,\alpha _{0},\alpha
\right) \Phi \left( B,s\right) B^{2sJ_{L}}g\left( J_{L}\right) +O\left(
B^{2sJ_{L}}g\left( J_{L}\right) ^{2}\right) \text{ ,}
\end{equation*}%
where%
\begin{equation*}
\Phi \left( B,s\right) =\log ^{2}B\frac{B^{s}}{\left( B^{s}-1\right) ^{2}}%
\left( \frac{2sg\left( J_{L}\right) }{B^{s}-1}+\frac{s\log B-2}{\log B}%
\right) \text{.}
\end{equation*}

Finally, we obtain
\begin{eqnarray*}
Q_{den}\left( \alpha \right)  &=&c_{B}^{2}G_{0}^{2}I\left( B,\alpha
_{0},\alpha \right) \left( \sum_{j=J_{1}}^{J_{L}}B^{sj}\right) ^{2} \\
&=&c_{B}^{2}G_{0}^{2}I\left( B,\alpha _{0},\alpha \right)
B^{2sJ_{L}}+o\left( B^{2sJ_{L}}\right) \text{ .}
\end{eqnarray*}%
Hence
\begin{equation*}
Q_{J_{L};J_{1}}\left( \alpha \right) =\Phi \left( B,s\right) g\left(
J_{L}\right) +O\left( B^{2sJ_{L}}g\left( J_{L}\right) ^{2}\right) \text{ ,}
\end{equation*}%
and, for the consistency of $\alpha ,$ we have
\begin{equation*}
Q_{J_{L};J_{1}}\left( \overline{\alpha }\right) \rightarrow _{p}\Phi \left(
B\right) g\left( J_{L}\right) \text{ .}
\end{equation*}%
Thus%
\begin{equation*}
\left( \frac{\rho ^{2}(\alpha _{0},B)}{\Phi \left( B\right) }\right) ^{-%
\frac{1}{2}}g\left( J_{L}\right) ^{\frac{1}{2}}B^{J_{L}}\frac{\overline{S}%
_{J_{L};J_{1}}\left( \alpha _{0}\right) }{Q_{J_{L};J_{1}}\left( \overline{%
\alpha }\right) }\overset{d}{\rightarrow }\mathcal{N}\left( 0,1\right) \text{
,}
\end{equation*}%
as claimed. Finally we can see that
\begin{equation*}
g\left( J_{L}\right) ^{\frac{1}{2}}B^{J_{L}}\mathbb{E}\frac{\overline{S}%
_{J_{L};J_{1}}\left( \alpha _{0}\right) }{Q_{J_{L};J_{1}}\left( \overline{%
\alpha }\right) }=O\left( J_{L}\cdot g\left( J_{L}\right) ^{\frac{3}{2}%
}\right) \underset{J_{L\rightarrow \infty }}{\rightarrow }0\text{ .}
\end{equation*}
\end{proof}

\begin{remark}
A careful inspection of the proof reveals that the asymptotic result could
be alternatively presented as
\begin{equation*}
B^{-J_{L}}\sqrt{Z_{J_{L};J_{1}}\left( 2\right) }\left( \widehat{\alpha }%
_{L;L_{1}(\delta )}-\alpha _{0}\right) \overset{d}{\longrightarrow }\mathcal{%
N}\left( 0,\rho ^{2}(\alpha _{0},B)\frac{\left( B^{2}-1\right) ^{3}}{B^{6}}%
\right) \text{ ,}
\end{equation*}%
where%
\begin{equation*}
Z_{J_{L};J_{1}}\left( 2\right) =\left( \sum_{j=J_{1}}^{J_{L}}B^{2j}\right)
\left( \sum_{j=J_{1}}^{J_{L}}B^{2j}j^{2}\log ^{2}B\right) -\left(
\sum_{j=J_{1}}^{J_{L}}B^{2j}j\log B\right) ^{2}.
\end{equation*}
\end{remark}

\section{Numerical Results\label{numerical}}

\begin{figure}[tbp]
\centering
\includegraphics[width=\textwidth]{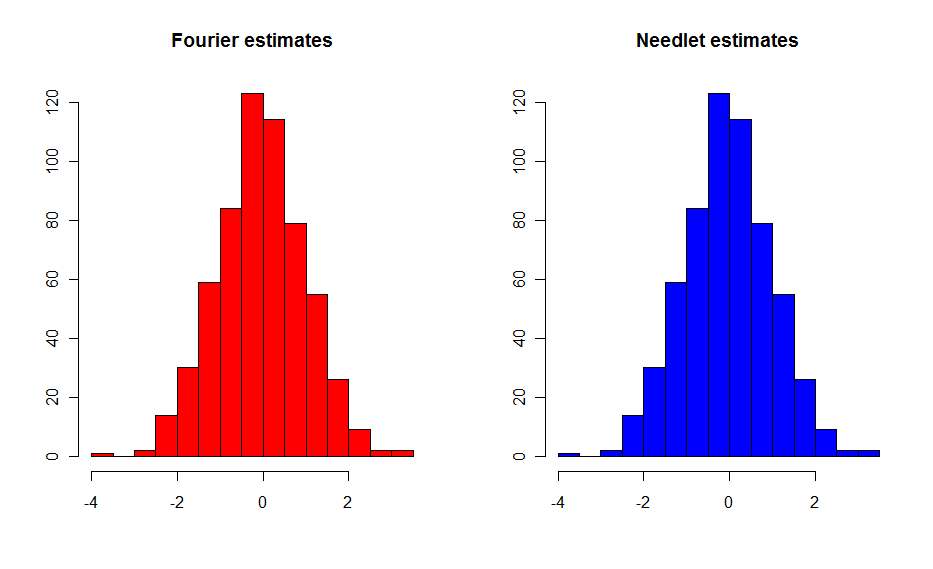}
\caption{Distribution of normalized $\left( \widehat{\protect\alpha}_{L}-%
\protect\alpha _{0}\right)$ and $\left( \widehat{\protect\alpha }_{J_{L}}-%
\protect\alpha _{0}\right) $, $L=2048$, $\protect\alpha _{0}=2$.}
\label{fig:1}
\end{figure}

In this Section we provide some numerical evidence to support the asymptotic
results discussed earlier. More precisely, using the statistical software R,
for given fixed values of $L$, $\alpha _{0}$, $B$ and $G_{0}$ and the
alternative conditions discussed in the previous Section, we sample random
values for the angular power spectra $\widehat{C}_{l}$, and evaluate the
corresponding needlet coefficients $\widehat{\beta}_{jk}$; we implement
standard and narrow-band estimates with both standard Fourier (as described
in \cite{dlm}) and needlet methods. We start by analyzing the simplest
model, i.e. the one corresponding to Condition \ref{REGULNEED3}. Here we
fixed $G_{0}=2$. In Figure 1, the first column reports the distribution of
Fourier estimates of $\widehat{\alpha }_{L}-\alpha _{0}$ normalized by a
factor $\sqrt{2}L/4$, while the second column reports the distribution of $%
\widehat{\alpha }_{J_{L}}-\alpha _{0}$ normalized by the factor $D(\alpha
_{0},B)^{-\frac{1}{2}}B^{J_{L}}$. In Table 2, we report the sample means and
variances for different values of $L$ and $\alpha _{0}$, while in Table 3 we
report the corresponding Shapiro-Wilk test of Gaussianity results. Figure 2
describes graphically the behavior of normalized distributions of estimates
of $\alpha _{0}$ in both classical Fourier and needlet analysis, full band
and narrow band, with $\kappa =1$, under Condition \ref{REGULNEED2}. Table 4
provides sample means and variances for different values of $\alpha _{0}\,$,
with $=\sqrt[8]{2}$, $L=1024$, $L_{1}=724$, and the results of the
corresponding Shapiro-Wilk test of Gaussianity. Overall, we believe this
numerical evidence to be very encouraging; in particular, we stress how the
asymptotic expression reported earlier provide extremely good approximations
for the Monte Carlo estimates of the standard deviation.%
\begin{equation*}
\begin{tabular}{|c|c|c|c|c|c|c|c|}
\cline{3-5}\cline{3-8}
\multicolumn{2}{c}{$B=2$} & \multicolumn{3}{|c}{$\widehat{\alpha }_{L}$} &
\multicolumn{3}{|c|}{$\widehat{\alpha }_{J_{L}}$} \\ \hline
$L$ & $\alpha _{0}$ & mean & sd & $\left( L\cdot sd\right) ^{2}/D$ & mean &
sd & $\left( L\cdot sd\right) ^{2}/D$ \\ \hline
& $2$ & 1.9984 & $1.12\cdot 10^{-2}$ & 1.03 & 1.9981 & $1.68\cdot 10^{-2}$ &
1.12 \\ \cline{2-8}
256 & $3$ & 2.9994 & $1.13\cdot 10^{-2}$ & 1.05 & 2.9997 & $1.84\cdot
10^{-2} $ & 1.07 \\ \cline{2-8}
& $4$ & 4.0009 & $1.10\cdot 10^{-2}$ & 0.99 & 3.9996 & $1.89\cdot 10^{-2}$ &
1.12 \\ \hline
& $2$ & 2.0005 & $5.79\cdot 10^{-3}$ & 1.09 & 2.0002 & $8.55\cdot 10^{-3}$ &
0.93 \\ \cline{2-8}
512 & $3$ & 2.9995 & $5.76\cdot 10^{-3}$ & 1.08 & 2.9999 & $8.50\cdot
10^{-3} $ & 0.91 \\ \cline{2-8}
& 4 & 3.9997 & $5.59\cdot 10^{-3}$ & 1.02 & 3.9997 & $9.35\cdot 10^{-3}$ &
1.09 \\ \hline
& $2$ & 2.0002 & $2.79\cdot 10^{-3}$ & 1.02 & 2.0002 & $4.42\cdot 10^{-3}$ &
0.99 \\ \cline{2-8}
1024 & $3$ & 2.9999 & $3.01\cdot 10^{-3}$ & 1.18 & 2.9998 & $4.40\cdot
10^{-3}$ & 0.98 \\ \cline{2-8}
& 4 & 3.9997 & $2.82\cdot 10^{-3}$ & 1.04 & 3.9998 & $4.39\cdot 10^{-3}$ &
0.97 \\ \hline
\end{tabular}%
\end{equation*}

\begin{center}
Table 2: Sample means and variances of $\widehat{\alpha }_{L}$ and $\widehat{%
\alpha }_{J_{L}}$, for different values of $L$ and $\alpha _{0}$, $B=2$.
\end{center}

\begin{equation*}
\begin{tabular}{cc|c|c|c|c|}
\cline{3-4}\cline{3-6}
&  & \multicolumn{2}{|c|}{$\widehat{\alpha }_{L}$ - Shapiro test} &
\multicolumn{2}{|c|}{$\widehat{\alpha }_{J_{L}}$ - Shapiro test} \\ \hline
\multicolumn{1}{|c}{$L$} & \multicolumn{1}{|c|}{$\alpha _{0}$} & $W$ &
p-value & $W$ & p-value \\ \hline
\multicolumn{1}{|c|}{} & \multicolumn{1}{|c|}{$2$} & 0.9921 & 0.35 & 0.9952
& 0.32 \\ \cline{2-6}
\multicolumn{1}{|c|}{256} & \multicolumn{1}{|c|}{$3$} & 0.9945 & 0.68 &
0.9939 & 0.59 \\ \cline{2-6}
\multicolumn{1}{|c|}{} & \multicolumn{1}{|c|}{$4$} & 0.9906 & 0.22 & 0.9909
& 0.24 \\ \hline
\multicolumn{1}{|c|}{} & \multicolumn{1}{|c|}{$2$} & 0.9956 & 0.83 & 0.9945
& 0.12 \\ \cline{2-6}
\multicolumn{1}{|c|}{512} & \multicolumn{1}{|c|}{$3$} & 0.9915 & 0.30 &
0.9943 & 0.65 \\ \cline{2-6}
\multicolumn{1}{|c|}{} & \multicolumn{1}{|c|}{4} & 0.9946 & 0.70 & 0.9895 &
0.15 \\ \hline
\multicolumn{1}{|c|}{} & \multicolumn{1}{|c|}{$2$} & 0.9885 & 0.11 & 0.9955
& 0.82 \\ \cline{2-6}
\multicolumn{1}{|c|}{1024} & \multicolumn{1}{|c|}{$3$} & 0.9967 & 0.94 &
0.9949 & 0.74 \\ \cline{2-6}
\multicolumn{1}{|c|}{} & \multicolumn{1}{|c|}{4} & 0.9915 & 0.29 & 0.9959 &
0.87 \\ \hline
\end{tabular}%
\end{equation*}

\begin{center}
Table 3: Shapiro-Wilk Gaussianity test of $\widehat{\alpha }_{L}$ and $%
\widehat{\alpha }_{J_{L}}$, for different values of $L$ and $\alpha _{0}$, $%
B=2$.

\begin{figure}[tbp]
\centering
\includegraphics[width=\textwidth]{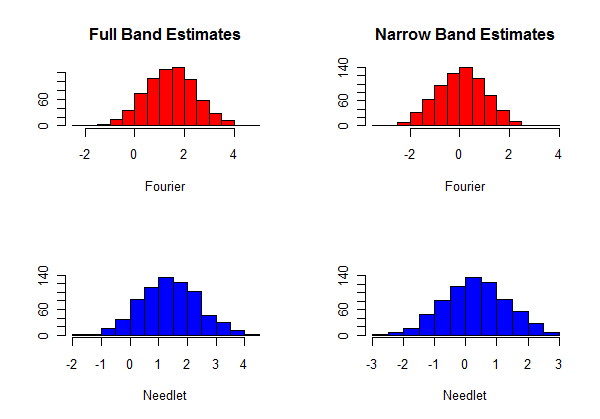}
\caption{Comparison among normalized distribution of Full Band and Narrow
band estimates - $\protect\alpha _{0}=2$, $L=1024$, $L_{1}=724$.}
\label{fig:2}
\end{figure}

\begin{equation*}
\begin{tabular}{c|c|c|c|c|c|c|c|c|}
\cline{2-5}\cline{2-9}
& \multicolumn{2}{|c|}{$\widehat{\alpha }_{L}$} & \multicolumn{2}{|c}{
Shapiro test} & \multicolumn{2}{|c|}{$\widehat{\alpha }_{J_{L}}$} &
\multicolumn{2}{|c|}{Shapiro test} \\ \hline
\multicolumn{1}{|c|}{$\alpha _{0}$} & mean & sd & W & p-val & mean & sd & W
& p-val \\ \hline
\multicolumn{1}{|c|}{$2$} & 2.004 & $2.68\cdot 10^{-3}$ & 0.9985 & 0.35 &
2.007 & $2.75\cdot 10^{-3}$ & 0.9989 & 0.32 \\ \hline
\multicolumn{1}{|c|}{$3$} & 3.004 & $2.76\cdot 10^{-3}$ & 0.9988 & 0.91 &
3.004 & $2.79\cdot 10^{-3}$ & 0.9988 & 0.94 \\ \hline
\multicolumn{1}{|c|}{$4$} & 4.004 & $2.88\cdot 10^{-3}$ & 0.9978 & 0.89 &
4.004 & $2.97\cdot 10^{-3}$ & 0.9954 & 0.28 \\ \hline
& \multicolumn{2}{|c|}{$\widehat{\alpha }_{L;L_{1}}$} & \multicolumn{2}{|c}{
Shapiro test} & \multicolumn{2}{|c|}{$\widehat{\alpha }_{J_{L};J_{1}}$} &
\multicolumn{2}{|c|}{Shapiro test} \\ \hline
\multicolumn{1}{|c|}{$\alpha _{0}$} & mean & sd & W & p-val & mean & sd & W
& p-val \\ \hline
\multicolumn{1}{|c|}{$2$} & 1.999 & $5.51\cdot 10^{-3}$ & 0.9907 & 0.72 &
2.002 & $6.69\cdot 10^{-3}$ & 0.9876 & 0.25 \\ \hline
\multicolumn{1}{|c|}{$3$} & 3.001 & $5.66\cdot 10^{-3}$ & 0.9989 & 0.95 &
3.001 & $6.98\cdot 10^{-3}$ & 0.9979 & 0.53 \\ \hline
\multicolumn{1}{|c|}{$4$} & 4.001 & $5.59\cdot 10^{-3}$ & 0.9960 & 0.42 &
4.001 & $6.40\cdot 10^{-3}$ & 0.9974 & 0.78 \\ \hline
\end{tabular}%
\end{equation*}%
Table 4: Sample means, variances and Shapiro Wilk Gaussianity test of $%
\widehat{\alpha }_{L}$ and $\widehat{\alpha }_{J_{L}}$, for different values
of $\alpha _{0}$
\end{center}

\appendix{}

\section{Auxiliary Results\label{sec: auxres}}

\bigskip

This Section presents some results, mainly focussed on the behaviour of the $%
K_{j}\left( \alpha \right) $, which will be useful to develop consistency
and asymptotic behaviour of the estimator (\ref{alphaneed}). We remark that
all these results hold under Condition \ref{REGULNEED}.

\begin{proposition}
\label{propKj}Let $K_{j}\left( \alpha \right) $, $K_{j,1}\left( \alpha
\right) $ and $K_{j,2}\left( \alpha \right) $ be as in Definition \ref%
{Kfunctions}. Then we have:
\begin{eqnarray}
K_{j}\left( \alpha \right) &=&B^{-\alpha j}\left( \frac{1}{c_{B}}I_{0}\left(
B,\alpha \right) +o_{j}\left( 1\right) \right) \text{ ;}  \label{K0j} \\
\frac{K_{j}\left( \alpha _{0}\right) }{K_{j}\left( \alpha \right) }
&=&B^{j\left( \alpha -\alpha _{0}\right) }\left( I\left( B,\alpha
_{0},\alpha \right) +o_{j}\left( 1\right) \right) \text{,}  \label{K0ratio}
\end{eqnarray}%
where
\begin{equation}
I_{0}\left( B,\alpha \right) =2\int_{B^{-1}}^{B}b^{2}\left( u\right)
u^{1-\alpha }du\text{ ,}  \label{I0_def}
\end{equation}%
\begin{equation}
I\left( B,\alpha _{0},\alpha \right) =\frac{I_{0}\left( B,\alpha _{0}\right)
}{I_{0}\left( B,\alpha \right) }  \label{I_def}
\end{equation}%
so that%
\begin{equation*}
0<c_{1}<I_{0}\left( B\right) <c_{2}<+\infty
\end{equation*}%
and%
\begin{equation}
\left[ I\left( \alpha _{0},\alpha \right) -1\right] =C_{I}\left\vert \alpha
-\alpha _{0}\right\vert  \label{I-ratio}
\end{equation}%
Moreover, for $K_{j,1}\left( \alpha \right) $ and $K_{j,2}\left( \alpha
\right) $ as above, we have:%
\begin{equation}
K_{j,1}\left( \alpha \right) +K_{j}\left( \alpha \right) \log B^{j}=-\frac{1%
}{c_{B}}B^{-\alpha j}\left\{ I_{1}\left( B\right) +o_{j}\left( 1\right)
\right\} \text{ ;}  \label{K1j}
\end{equation}%
\begin{equation}
K_{j,2}\left( \alpha \right) +K_{j}\left( \alpha \right) \log
^{2}B^{j}+2K_{j,1}\left( \alpha \right) \log B^{j}=\frac{1}{c_{B}}B^{-\alpha
j}\left\{ I_{2}\left( B\right) +o_{j}\left( 1\right) \right\}  \label{Kj2}
\end{equation}%
where%
\begin{equation*}
I_{1}\left( B\right) =2\int_{B^{-1}}^{B}b^{2}\left( u\right) u^{1-\alpha
}\left( \log u\right) du\text{ , }I_{2}\left( B\right)
=2\int_{B^{-1}}^{B}b^{2}\left( u\right) u^{1-\alpha }\left( \log u\right)
^{2}du\text{,}
\end{equation*}%
and%
\begin{equation*}
0<c_{3}<I_{2}\left( B\right) <c_{4}<+\infty \text{ .}
\end{equation*}
\end{proposition}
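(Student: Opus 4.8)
The plan is to reduce every sum occurring in $K_{j}(\alpha)$, $K_{j,1}(\alpha)$ and $K_{j,2}(\alpha)$ to a Riemann-sum approximation of an integral against the weight $b^{2}$. Writing $2l+1=2l\bigl(1+O(l^{-1})\bigr)$ and substituting $u=l/B^{j}$, the summation index runs over the lattice $\{u:uB^{j}\in\mathbb{N}\}\cap[B^{-1},B]$ of mesh $B^{-j}$ (the restriction to $[B^{-1},B]$ coming from the support of $b$), and since $N_{j}=c_{B}B^{2j}$ and $l\approx B^{j}$ one obtains
\begin{equation*}
K_{j}(\alpha)=\frac{1}{N_{j}}\sum_{l}b^{2}\!\left(\frac{l}{B^{j}}\right)(2l+1)l^{-\alpha}=\frac{2B^{-\alpha j}}{c_{B}}\sum_{u}b^{2}(u)\,u^{1-\alpha}\,B^{-j}\,\bigl(1+O(B^{-j})\bigr).
\end{equation*}
Because $b$ has compact support strictly inside $(0,\infty)$ and lies in $C^{\infty}(0,\infty)$, the function $b^{2}(u)u^{1-\alpha}$ extended by zero is $C^{\infty}$ with compact support, so its Riemann sums over a mesh $B^{-j}$ converge to $\int_{B^{-1}}^{B}b^{2}(u)u^{1-\alpha}\,du=\tfrac{1}{2}I_{0}(B,\alpha)$ with an error $O(B^{-j})$, the boundary terms vanishing. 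This gives $K_{j}(\alpha)=c_{B}^{-1}B^{-\alpha j}\bigl(I_{0}(B,\alpha)+o_{j}(1)\bigr)$, which is (\ref{K0j}), and (\ref{K0ratio}) follows at once by dividing two such expressions. The bounds $0<c_{1}<I_{0}(B)<c_{2}<\infty$ are then clear: $b^{2}$ is bounded, $u^{1-\alpha}$ is bounded above and below on the compact set $[B^{-1},B]$, and $b^{2}$ is strictly positive on a subinterval by the partition-of-unity property.

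For (\ref{I-ratio}) write $I(B,\alpha_{0},\alpha)-1=\bigl(I_{0}(B,\alpha_{0})-I_{0}(B,\alpha)\bigr)/I_{0}(B,\alpha)$; differentiating $\alpha\mapsto I_{0}(B,\alpha)$ under the integral sign (legitimate since the integrand and its $\alpha$-derivative $-b^{2}(u)u^{1-\alpha}\log u$ are bounded on the compact support) and applying the mean value theorem yields $|I_{0}(B,\alpha_{0})-I_{0}(B,\alpha)|\le C|\alpha-\alpha_{0}|$, hence the claim with $C_{I}:=C/c_{1}$.

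The derivative identities rest on the elementary splitting $\log l=\log B^{j}+\log(l/B^{j})$. Inserting its first power into $K_{j,1}(\alpha)=-N_{j}^{-1}\sum_{l}b^{2}(l/B^{j})(2l+1)l^{-\alpha}\log l$ peels off the term $-K_{j}(\alpha)\log B^{j}$, and the remaining sum is, by exactly the Riemann-sum argument above, $-c_{B}^{-1}B^{-\alpha j}\bigl(I_{1}(B)+o_{j}(1)\bigr)$ — this is (\ref{K1j}). Squaring the splitting and summing, one finds that in the combination $K_{j,2}(\alpha)+K_{j}(\alpha)\log^{2}B^{j}+2K_{j,1}(\alpha)\log B^{j}$ all the $\log B^{j}$-powers of $K_{j}$ cancel (using (\ref{K1j}) for the cross-term), leaving only the Riemann sum of $b^{2}(u)u^{1-\alpha}\log^{2}u$, equal to $c_{B}^{-1}B^{-\alpha j}\bigl(I_{2}(B)+o_{j}(1)\bigr)$, which is (\ref{Kj2}); the bounds on $I_{2}(B)$ are obtained exactly as for $I_{0}(B)$.

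The one delicate point — and the main obstacle — is the \emph{quality} of the Riemann-sum error. In (\ref{Kj2}) the residual integral terms are effectively multiplied by $\log B^{j}=j\log B$, so an error that were merely $o_{j}(1)$ would not suffice; one genuinely needs the rate $O(B^{-j})$, so that $j\log B\cdot O(B^{-j})=o_{j}(1)$. This is precisely what the $C^{\infty}$, compactly supported nature of $b$ delivers (via Euler--Maclaurin, or simply because the Riemann sums of a smooth compactly supported function converge super-polynomially fast), and the same smoothness, together with compactness of the parameter set $A$, makes the $o_{j}(1)$ and $O(B^{-j})$ bounds uniform in $\alpha$; the $O(l^{-1})$ correction from replacing $2l+1$ by $2l$ likewise enters only at relative order $B^{-j}$.
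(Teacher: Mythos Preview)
Your proof is correct and follows essentially the same approach as the paper: factor out $B^{-\alpha j}$, recognize the remaining sum as a Riemann sum over the lattice of mesh $B^{-j}$, and for the derivatives use the splitting $\log l=\log B^{j}+\log(l/B^{j})$ so that the combinations on the left of (\ref{K1j}) and (\ref{Kj2}) collapse to single Riemann sums for $I_{1}$ and $I_{2}$. Your treatment of (\ref{I-ratio}) via differentiation under the integral sign plus the mean value theorem is arguably cleaner than the paper's, which bounds $I_{0}(B,\alpha)$ directly via the integral mean value theorem.

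One comment: the concern in your final paragraph is overcautious. The point is that the left-hand side of (\ref{Kj2}) is the \emph{exact} algebraic identity
\[
K_{j,2}(\alpha)+K_{j}(\alpha)\log^{2}B^{j}+2K_{j,1}(\alpha)\log B^{j}
=\frac{1}{N_{j}}\sum_{l}b^{2}\!\left(\tfrac{l}{B^{j}}\right)(2l+1)l^{-\alpha}\log^{2}\!\left(\tfrac{l}{B^{j}}\right),
\]
since $(\log l-\log B^{j})^{2}=\log^{2}(l/B^{j})$; no use of the asymptotic (\ref{K1j}) is needed, and hence no $o_{j}(1)$ error gets multiplied by $\log B^{j}$. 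The right-hand side is then a single Riemann sum with bounded integrand, and a bare $o_{j}(1)$ rate already suffices. The sharper $O(B^{-j})$ rate you invoke is available and harmless, but it is not the ``main obstacle'' here.
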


\begin{proof}
Recalling that $N_{j}=c_{B}B^{2j}$ from (\ref{Njdef}), simple calculations
lead to (\ref{K0j}):%
\begin{eqnarray*}
K_{j}\left( \alpha \right) &=&\frac{1}{c_{B}}B^{-\alpha
j}\sum_{l=B^{j-1}}^{B^{j+1}}b^{2}\left( \frac{l}{B^{j}}\right) \frac{1}{B^{j}%
}\frac{\left( 2l+1\right) }{B^{j}}\frac{l^{-\alpha }}{B^{-\alpha j}} \\
&=&B^{-\alpha j}\frac{1}{c_{B}}\left\{ 2\int_{B^{-1}}^{B}b^{2}\left(
u\right) u^{1-\alpha }du+o_{j}\left( 1\right) \right\} \\
&=&B^{-\alpha j}\frac{1}{c_{B}}\left\{ I_{0}\left( B\right) +o_{j}\left(
1\right) \right\} \text{;}
\end{eqnarray*}%
by applying the Lagrange Mean Value Theorem we obtain%
\begin{equation*}
I_{0}\left( B,\alpha \right) =2b^{2}\left( \xi \right) \xi ^{1-\alpha
}\left( B-B^{-1}\right) \text{ for }\xi \in \left[ B^{-1},B\right] \text{ ,}
\end{equation*}%
which is a non-zero, finite positive real number. Obviously:
\begin{equation*}
\frac{K_{j}\left( \alpha \right) }{K_{j}\left( \alpha _{0}\right) }=\frac{%
B^{-\alpha j}\left\{ I_{0}\left( B,\alpha \right) +o_{j}\left( 1\right)
\right\} }{B^{-\alpha _{0}j}\left\{ I_{0}\left( B,\alpha _{0}\right)
+o_{j}\left( 1\right) \right\} }=B^{j\left( \alpha _{0}-\alpha \right)
}\left\{ I\left( B,\alpha ,\alpha _{0}\right) +o_{j}\left( 1\right) \right\}
\text{ .}
\end{equation*}

Because by construction $0<C_{\min }\leq b^{2}\left( \xi \right) \leq
C_{\max }<\infty $, we have%
\begin{equation*}
C_{\min }B^{1-\alpha }\left( B-B^{-1}\right) \leq I_{0}\left( B,\alpha
\right) \leq C_{\max }B^{1-\alpha }\left( B-B^{-1}\right) \text{ ,}
\end{equation*}%
so that
\begin{equation*}
\left( \frac{C_{\max }}{C_{\min }}B^{\alpha -\alpha _{0}}\right) ^{-1}\leq
I\left( B,\alpha _{0},\alpha \right) \leq \frac{C_{\max }}{C_{\min }}%
B^{\alpha -\alpha _{0}}\text{ .}
\end{equation*}%
Hence, fixing $C_{\min }/C_{\max }\leq C_{I}\leq C_{\max }/C_{\min }$, we
obtain (\ref{I-ratio}). We recall moreover that $I\left( B,\alpha ,\alpha
\right) =1$.

As far as $K_{j,1}\left( \alpha \right) $ is concerned, we prove (\ref{K1j}%
). In fact:%
\begin{equation*}
K_{j,1}\left( \alpha \right) +K_{j}\left( \alpha \right) \log B^{j}=
\end{equation*}%
\begin{eqnarray*}
&=&\frac{-B^{-\alpha j}}{c_{B}}\sum_{l=B^{j-1}}^{B^{j+1}}b^{2}\left( \frac{l%
}{B^{j}}\right) \frac{1}{B^{j}}\frac{\left( 2l+1\right) }{B^{j}}\frac{%
l^{-\alpha }}{B^{-\alpha j}}\left( \log \frac{l}{B^{j}}\right) \\
&=&\frac{B^{-\alpha j}}{c_{B}}\left( -\int_{B^{-1}}^{B}2b^{2}\left( u\right)
u^{1-\alpha }\left( \log u\right) du+o_{j}\left( 1\right) \right) \\
&=&-B^{-\alpha j}\frac{1}{c_{B}}\left\{ I_{1}\left( B\right) +o_{j}\left(
1\right) \right\} \text{ .}
\end{eqnarray*}

Now, we have, by applying again Lagrange Mean Value Theorem:%
\begin{equation*}
I_{1}\left( B\right) =2b^{2}\left( \xi \right) \xi ^{1-\alpha }\log \left(
\xi \right) \left( B^{-1}-B\right) \text{ for }\xi \in \left[ B^{-1},B\right]
\text{ ,}
\end{equation*}%
where $-\infty <-c_{1}\leq I_{1}\left( B\right) \leq c_{2}<+\infty $, so
that
\begin{equation}
K_{j,1}\left( \alpha \right) =-B^{-\alpha j}\frac{1}{c_{B}}\left(
I_{0}\left( B\right) \log B^{j}-I_{1}\left( B\right) +o_{j}\left( 1\right)
\right) \text{ .}  \label{K1see}
\end{equation}%
Similarly, we obtain for (\ref{Kj2}):%
\begin{equation*}
K_{j,2}\left( \alpha \right) +K_{j}\left( \alpha \right) \log
^{2}B^{j}+2K_{j,1}\left( \alpha \right) \log B^{j}
\end{equation*}

\begin{eqnarray*}
&=&B^{-\alpha j}\frac{1}{c_{B}}\sum_{l=B^{j-1}}^{B^{j+1}}b^{2}\left( \frac{l%
}{B^{j}}\right) \frac{1}{B^{j}}\frac{\left( 2l+1\right) }{B^{j}}\frac{%
l^{-\alpha }}{B^{-\alpha j}}\log ^{2}\frac{l}{B^{j}} \\
&=&B^{-\alpha j}\frac{1}{c_{B}}\left\{ \int_{B^{-1}}^{B}2b\left( u\right)
u^{1-\alpha }\left( \log u\right) ^{2}du+o_{j}\left( 1\right) \right\} \\
&=&B^{-\alpha j}\frac{1}{c_{B}}\left\{ I_{2}\left( B\right) +o_{j}\left(
1\right) \right\} \text{ ,}
\end{eqnarray*}%
which is trivially strictly positive and bounded. Hence we have:%
\begin{equation}
K_{j,2}\left( \alpha \right) =B^{-\alpha j}\left( I_{0}\left( B\right)
\left( \log B^{j}\right) ^{2}+2I_{1}(B)\log B^{j}+I_{2}\left( B\right)
+o_{j}\left( 1\right) \right) \text{ .}  \label{K2see}
\end{equation}
\end{proof}

We now provide some further auxiliary results on the function $K_{j}\left(
\alpha \right) ;$ these results are exploited in the proofs for consistency
and elsewhere.

\begin{corollary}
\label{Kjcoroll}As $j\rightarrow \infty ,$ we have:%
\begin{equation}
\frac{K_{j,1}\left( \alpha \right) }{K_{j}\left( \alpha \right) }=-\log
B^{j}+\frac{I_{1}\left( B\right) }{I_{0}\left( 0\right) }=\log B^{j}\left(
-1+o_{j}\left( 1\right) \right) \text{ ;}  \label{K1lim}
\end{equation}%
\begin{equation*}
\left\{ 2\left( \frac{K_{j,1}\left( \alpha \right) }{K_{j}\left( \alpha
\right) }\right) ^{2}-\frac{K_{j,2}\left( \alpha \right) }{K_{j}\left(
\alpha \right) }\right\}
\end{equation*}%
\begin{eqnarray}
&=&\left( \log B^{j}\right) ^{2}+2\frac{I_{1}\left( B\right) }{I_{0}\left(
0\right) }\log B^{j}+2\left( \frac{I_{1}\left( B\right) }{I_{0}\left(
0\right) }\right) ^{2}+\frac{I_{2}\left( B\right) }{I_{0}\left( B\right) }%
\text{ }  \label{K2lim} \\
&=&\left( \log B^{j}\right) ^{2}\left( 1+o_{j}\left( 1\right) \right) \text{.%
}  \notag
\end{eqnarray}

\begin{proof}
From (\ref{K1see}), we obtain:%
\begin{equation*}
\frac{K_{j,1}\left( \alpha \right) }{K_{j}\left( \alpha \right) }=-\log
B^{j}(1+\frac{I_{1}\left( B\right) }{I_{0}\left( B\right) }+o\left( 1\right)
)\text{ .}
\end{equation*}%
Also, in view of (\ref{K2see}),
\begin{equation*}
\frac{K_{j,2}\left( \alpha \right) }{K_{j}\left( \alpha \right) }=\left(
\log B^{j}\right) ^{2}+2\frac{I_{1}\left( B\right) }{I_{0}\left( 0\right) }%
\log B^{j}+\frac{I_{2}\left( B\right) }{I_{0}\left( B\right) }+o_{j}\left(
1\right) \text{ .}
\end{equation*}%
Thus%
\begin{equation*}
\left\{ 2\left( \frac{K_{j,1}\left( \alpha \right) }{K_{j}\left( \alpha
\right) }\right) ^{2}-\frac{K_{j,2}\left( \alpha \right) }{K_{j}\left(
\alpha \right) }\right\}
\end{equation*}%
\begin{equation*}
=\left( \log B^{j}\right) ^{2}+2\left( \frac{I_{1}\left( B\right) }{%
I_{0}\left( 0\right) }\right) ^{2}+2\frac{I_{1}\left( B\right) }{I_{0}\left(
0\right) }\log B^{j}+\frac{I_{2}\left( B\right) }{I_{0}\left( B\right) }%
\text{ .}
\end{equation*}
\end{proof}
\end{corollary}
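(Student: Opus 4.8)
The plan is to treat this as an immediate corollary of Proposition~\ref{propKj}: every displayed identity follows by dividing the asymptotic expansions of $K_{j,1}(\alpha)$ and $K_{j,2}(\alpha)$ by that of $K_j(\alpha)$, once the divergent $\log B^{j}$ contributions have been isolated. The one structural fact that makes this work is that $I_0(B)$ is bounded away from $0$ (Proposition~\ref{propKj}), so that dividing by $K_j(\alpha)=\tfrac1{c_B}B^{-\alpha j}\{I_0(B)+o_j(1)\}$ turns an expansion of the form $\mathrm{const}+o_j(1)$ into one of the same form; after that it is pure bookkeeping, with $\alpha$ entering only through the constants $I_0,I_1,I_2$.

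For (\ref{K1lim}) I would write
\begin{equation*}
\frac{K_{j,1}(\alpha)}{K_j(\alpha)}=-\log B^{j}+\frac{K_{j,1}(\alpha)+K_j(\alpha)\log B^{j}}{K_j(\alpha)}\,.
\end{equation*}
By (\ref{K1j}) the numerator of the remaining fraction is $-\tfrac1{c_B}B^{-\alpha j}\{I_1(B)+o_j(1)\}$ and by (\ref{K0j}) the denominator is $\tfrac1{c_B}B^{-\alpha j}\{I_0(B)+o_j(1)\}$; the $B^{-\alpha j}/c_B$ prefactors cancel and, since $I_0(B)$ is a fixed nonzero constant, the fraction converges to a finite limit, namely the one recorded in (\ref{K1see}). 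Hence $\tfrac{K_{j,1}(\alpha)}{K_j(\alpha)}=-\log B^{j}+O(1)$, and because $\log B^{j}=j\log B\to\infty$ the bounded remainder is negligible against $-\log B^{j}$, which gives the compact form $\log B^{j}\bigl(-1+o_j(1)\bigr)$.

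For the second identity I would apply the same device to (\ref{Kj2}): dividing through by $K_j(\alpha)$, using again that the $B^{-\alpha j}/c_B$ prefactors cancel and that $I_0(B)$ is bounded away from $0$ (with $0<c_3<I_2(B)<c_4<\infty$), one obtains
\begin{equation*}
\frac{K_{j,2}(\alpha)}{K_j(\alpha)}=-\log^{2}B^{j}-2\,\frac{K_{j,1}(\alpha)}{K_j(\alpha)}\log B^{j}+\frac{I_2(B)}{I_0(B)}+o_j(1)\,,
\end{equation*}
which, after substituting (\ref{K1lim}), is exactly the expansion (\ref{K2see}). Then I would insert this together with the square of (\ref{K1lim}) into $2\bigl(K_{j,1}(\alpha)/K_j(\alpha)\bigr)^{2}-K_{j,2}(\alpha)/K_j(\alpha)$ and collect the $\log^{2}B^{j}$, $\log B^{j}$ and constant terms; the $\log^{2}B^{j}$-coefficient comes out to $1$, and all remaining terms are $O(\log B^{j})$, yielding both the explicit expansion and the compact form $(\log B^{j})^{2}\bigl(1+o_j(1)\bigr)$. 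I do not expect a genuine obstacle here; the only point requiring care is the order of the error terms — an $o_j(1)$ remainder surviving the division, once multiplied by $\log B^{j}$, becomes $o_j(\log B^{j})$, which is harmless against the leading $\log^{2}B^{j}$ but is not absorbable into the $O(1)$ part of the explicit expansion — so that expansion should be read with the understanding that its error is $o_j(\log B^{j})$ rather than $o_j(1)$.
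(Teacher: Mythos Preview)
Your proposal is correct and follows essentially the same route as the paper: both arguments simply divide the expansions of $K_{j,1}$ and $K_{j,2}$ from Proposition~\ref{propKj} (equivalently (\ref{K1see}) and (\ref{K2see})) by that of $K_j$, using that $I_0(B)$ is bounded away from zero. Your closing remark about the error term --- that dividing and then multiplying by $\log B^{j}$ turns an $o_j(1)$ into $o_j(\log B^{j})$, so the explicit expansion should be understood with that larger error --- is a valid point that the paper's proof glosses over.
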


\begin{remark}
It is immediate to see that under Condition \ref{REGULNEED}, equation (\ref%
{expbeta2}) becomes (using Proposition \ref{propKj}):%
\begin{equation}
\mathbb{E}\left( \frac{1}{N_{j}}\frac{\sum_{k}\beta _{jk}^{2}}{%
G_{0}K_{j}\left( \alpha _{0}\right) }\right) =1+O\left( B^{-j}\right) \text{
,}  \label{expvaluebetabias}
\end{equation}%
while under Condition \ref{REGULNEED2} we have%
\begin{equation}
\mathbb{E}\left( \frac{1}{N_{j}}\frac{\sum_{k}\beta _{jk}^{2}}{%
G_{0}K_{j}\left( \alpha _{0}\right) }\right) =1+\kappa B^{-j}+o\left(
B^{-j}\right) \text{ .}  \label{expvaluebetak}
\end{equation}
\end{remark}

\begin{proposition}
\label{propsumB}Let $s>0$, $B>1$, $J_{1}<J_{L}$ Then:%
\begin{eqnarray}
\sum_{j=J_{1}}^{J_{L}}B^{sj} &=&\frac{B^{s}}{B^{s}-1}\left(
B^{sJ_{L}}-B^{sJ_{1}-1}\right) \text{ ,}  \label{Jint0} \\
\sum_{j=J_{1}}^{J_{L}}B^{sj}\log B^{j} &=&\frac{B^{s}}{B^{s}-1}\log B\left(
B^{sJ_{L}}\left( J_{L}-\frac{1}{B^{s}-1}\right) \right.  \label{Jint1} \\
&&\left. -B^{s\left( J_{1}-1\right) }\left( \left( J_{1}-1\right) -\frac{1}{%
B^{s}-1}\right) \right) \text{ ,}  \notag \\
\sum_{j=J_{1}}^{J_{L}}B^{sj}j^{2}\log ^{2}B &=&\frac{B^{s}}{B^{s}-1}\log
^{2}B\left( B^{sJ_{L}}\left( \left( J_{L}-\frac{1}{B^{s}-1}\right) ^{2}+%
\frac{B^{s}}{\left( B^{s}-1\right) ^{2}}\right) \right.  \notag \\
&&-\left. B^{s\left( J_{1}-1\right) }\left( \left( \left( J_{1}-1\right) -%
\frac{1}{B^{s}-1}\right) ^{2}+\frac{B^{s}}{\left( B^{s}-1\right) ^{2}}%
\right) \right)  \label{Jint2b}
\end{eqnarray}
\end{proposition}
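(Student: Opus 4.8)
For (\ref{Jint0}) I would simply invoke the finite geometric series formula: with $q=B^{s}>1$,
\[
\sum_{j=J_{1}}^{J_{L}}B^{sj}=B^{sJ_{1}}\,\frac{q^{\,J_{L}-J_{1}+1}-1}{q-1}
=\frac{B^{s(J_{L}+1)}-B^{sJ_{1}}}{B^{s}-1}
=\frac{B^{s}}{B^{s}-1}\left(B^{sJ_{L}}-B^{sJ_{1}-1}\right).
\]

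For (\ref{Jint1}) and (\ref{Jint2b}) the plan is to regard the exponent as a free variable and differentiate. Put $t=s\log B$ and
\[
g(t)=\sum_{j=J_{1}}^{J_{L}}e^{tj}=\frac{e^{t(J_{L}+1)}-e^{tJ_{1}}}{e^{t}-1},
\]
the last equality being the geometric computation above with $e^{t}$ in place of $B^{s}$. Since $g'(t)=\sum_{j=J_{1}}^{J_{L}}j\,e^{tj}$ and $g''(t)=\sum_{j=J_{1}}^{J_{L}}j^{2}e^{tj}$, evaluating at $t=s\log B$ gives
\[
\sum_{j=J_{1}}^{J_{L}}B^{sj}\log B^{j}=(\log B)\,g'(s\log B),\qquad
\sum_{j=J_{1}}^{J_{L}}B^{sj}j^{2}\log^{2}B=(\log^{2}B)\,g''(s\log B),
\]
so it suffices to find $g'$ and $g''$ in closed form and substitute $e^{t}=B^{s}$.

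Applying the quotient rule to $g(t)=(e^{t(J_{L}+1)}-e^{tJ_{1}})/(e^{t}-1)$, clearing the factor $e^{t}-1$ once, and regrouping so that the contributions of $e^{t(J_{L}+1)}$ and of $e^{tJ_{1}}$ are collected separately, one obtains
\[
g'(t)=\frac{e^{t}}{e^{t}-1}\left[e^{tJ_{L}}\!\left(J_{L}-\frac{1}{e^{t}-1}\right)-e^{t(J_{1}-1)}\!\left((J_{1}-1)-\frac{1}{e^{t}-1}\right)\right];
\]
substituting $e^{t}=B^{s}$ and multiplying by $\log B$ is exactly (\ref{Jint1}). A second differentiation, handled in the same way, yields the analogous closed form for $g''$ in which each block coefficient $\bigl(J-\tfrac{1}{e^{t}-1}\bigr)$ (with $J=J_{L}$, resp. $J=J_{1}-1$) is replaced by $\bigl(J-\tfrac{1}{e^{t}-1}\bigr)^{2}+\tfrac{e^{t}}{(e^{t}-1)^{2}}$; substituting $e^{t}=B^{s}$ and multiplying by $\log^{2}B$ gives (\ref{Jint2b}).

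The only real work is the elementary but somewhat lengthy bookkeeping in the two differentiation steps: in each case one expands the quotient-rule expression, multiplies through by $e^{t}-1$ once, and rearranges so that the $e^{t(J_{L}+1)}$ and $e^{tJ_{1}}$ pieces each acquire the stated ``shifted'' form — for $g''$ this amounts to a completing-the-square manoeuvre that generates the extra $e^{t}/(e^{t}-1)^{2}$ term. There is no analytic subtlety, since $B^{s}>1$ keeps every denominator bounded away from zero and all sums are finite, so termwise differentiation is automatically justified.
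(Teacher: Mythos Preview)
Your proposal is correct and is essentially the paper's own argument: the paper also proves (\ref{Jint0}) by the geometric series formula and obtains (\ref{Jint1}) and (\ref{Jint2b}) by differentiating the closed form once and twice, writing $\sum_{j}B^{sj}\log B^{j}=\frac{d}{ds}\sum_{j}\exp\{sj\log B\}$ and similarly for the second derivative. The only difference is that the paper differentiates in the variable $s$ whereas you differentiate in $t=s\log B$, which is a harmless reparametrization.
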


\begin{proof}
The first result is trivial:%
\begin{eqnarray*}
\sum_{j=J_{1}}^{J_{L}}B^{sj}
&=&\sum_{j=0}^{J_{L}}B^{sj}-\sum_{j=0}^{J_{1}}B^{sj}=\frac{B^{sJ_{L}+1}-1}{%
B^{s}-1}-\frac{B^{sJ_{1}}-1}{B^{s}-1} \\
&=&\frac{B^{s}}{B^{s}-1}\left( B^{sJ_{L}}-B^{s\left( J_{1}-1\right) }\right)
\text{.}
\end{eqnarray*}%
Likewise, we obtain:%
\begin{equation*}
\frac{d}{ds}\frac{B^{s}}{B^{s}-1}=\frac{-B^{s}\log B}{\left( B^{s}-1\right)
^{2}}
\end{equation*}%
\begin{eqnarray*}
\sum_{j=J_{1}}^{J_{L}}B^{sj}\log B^{j} &=&\frac{d}{ds}\left[
\sum_{j=J_{1}}^{J_{L}}\exp \left\{ sj\log B\right\} \right] =\frac{d}{ds}%
\left[ \frac{B^{s}}{B^{s}-1}\left( B^{sJ_{L}}-B^{s\left( J_{1}-1\right)
}\right) \right] \\
&=&\frac{-B^{s}\log B}{\left( B^{s}-1\right) ^{2}}\left(
B^{sJ_{L}}-B^{s\left( J_{1}-1\right) }\right) +\frac{B^{s}\log B}{B^{s}-1}%
\left( J_{L}B^{sJ_{L}}-J_{1}B^{s\left( J_{1}-1\right) }\right) \\
&=&\frac{B^{s}}{B^{s}-1}\log B\left( B^{sJ_{L}}\left( J_{L}-\frac{1}{B^{s}-1}%
\right) -B^{s\left( J_{1}-1\right) }\left( \left( J_{1}-1\right) -\frac{1}{%
B^{s}-1}\right) \right) \text{ .}
\end{eqnarray*}%
Finally, we have:%
\begin{equation*}
\sum_{j=J_{1}}^{J_{L}}B^{sj}j^{2}\left( \log B\right) ^{2}=\frac{d^{2}}{%
ds^{2}}\left\{ \frac{B^{s}}{B^{s}-1}\left\{ B^{sJ_{L}}-B^{s\left(
J_{1}-1\right) }\right\} \right\}
\end{equation*}%
\begin{equation*}
=\frac{d}{ds}\left\{ \frac{B^{s}\log B}{B^{s}-1}\left( B^{sJ_{L}}\left(
J_{L}-\frac{1}{B^{s}-1}\right) -B^{s\left( J_{1}-1\right) }\left( \left(
J_{1}-1\right) -\frac{1}{B^{s}-1}\right) \right) \right\}
\end{equation*}%
\begin{eqnarray*}
&=&\frac{-B^{s}\log ^{2}B}{\left( B^{s}-1\right) ^{2}}\left(
B^{sJ_{L}}\left( J_{L}-\frac{1}{B^{s}-1}\right) -B^{s\left( J_{1}-1\right)
}\left( \left( J_{1}-1\right) -\frac{1}{B^{s}-1}\right) \right) \\
&&+\frac{B^{s}\log ^{2}B}{B^{s}-1}\left( B^{sJ_{L}}\left( J_{L}^{2}-\frac{1}{%
B^{s}-1}J_{L}+\frac{B^{s}}{\left( B^{s}-1\right) ^{2}}\right) \right) \\
&&-\frac{B^{s}\log ^{2}B}{B^{s}-1}\left( B^{sJ_{1}}\left( \left(
J_{1}-1\right) ^{2}-\frac{1}{B^{s}-1}\left( J_{1}-1\right) +\frac{B^{s}}{%
\left( B^{s}-1\right) ^{2}}\right) \right)
\end{eqnarray*}%
\begin{eqnarray*}
&=&\frac{B^{s}\log ^{2}B}{B^{s}-1}\left( \left( B^{sJ_{L}}\left( \left(
J_{L}-\frac{1}{B^{s}-1}\right) ^{2}+\frac{B^{s}}{\left( B^{s}-1\right) ^{2}}%
\right) \right) \right. \\
&&-\left. \left( B^{s\left( J_{1}-1\right) }\left( \left( \left(
J_{1}-1\right) -\frac{1}{B^{s}-1}\right) ^{2}+\frac{B^{s}}{\left(
B^{s}-1\right) ^{2}}\right) \right) \right) \text{ .}
\end{eqnarray*}
\end{proof}

The next result combines (\ref{Jint0}), (\ref{Jint1}), (\ref{Jint2b}).

\begin{corollary}
\label{sumcorollary}Let
\begin{equation*}
Z_{J_{L};J_{1}}\left( s\right) =\left( \sum_{j=J_{1}}^{J_{L}}B^{sj}\right)
\left( \sum_{j=J_{1}}^{J_{L}}B^{sj}j^{2}\log ^{2}B\right) -\left(
\sum_{j=J_{1}}^{J_{L}}B^{sj}j\log B\right) ^{2}\text{ .}
\end{equation*}%
Then we have:%
\begin{equation}
Z_{J_{L};J_{1}}\left( s\right) =\left( \frac{B^{s}\log B}{B^{s}-1}\right)
^{2}\left[ \frac{B^{s}}{\left( B^{s}-1\right) ^{2}}\left(
B^{sJ_{L}}-B^{s\left( J_{1}-1\right) }\right) ^{2}-B^{s\left(
J_{L}+J_{1}-1\right) }\left( J_{L}-\left( J_{1}-1\right) \right) ^{2}\right]
\label{ZJLJ1}
\end{equation}%
Moreover if $J_{1}=1\,$, we have%
\begin{equation*}
Z_{J_{L}}\left( s\right) =\left( \sum_{j=1}^{J_{L}}B^{sj}\right) \left(
\sum_{j=1}^{J_{L}}B^{sj}j^{2}\log ^{2}B\right) -\left(
\sum_{j=1}^{J_{L}}B^{sj}j\log B\right) ^{2}\text{ ,}
\end{equation*}%
so that
\begin{equation}
\lim_{J_{L}\rightarrow \infty }B^{-2sJ_{L}}Z_{J_{L}}\left( s\right) =\log
^{2}B\frac{B^{3s}}{(B^{s}-1)^{4}}\text{ .}  \label{ZJLlim}
\end{equation}
\end{corollary}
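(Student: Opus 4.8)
The plan is to substitute the three closed forms (\ref{Jint0}), (\ref{Jint1}), (\ref{Jint2b}) of Proposition \ref{propsumB} directly into the definition of $Z_{J_L;J_1}(s)$ and push through the algebra. To keep the bookkeeping under control I would first introduce abbreviations: put $a=B^{sJ_L}$, $b=B^{s(J_1-1)}$, $c=B^s/(B^s-1)$, $r=1/(B^s-1)$, and write $P=J_L-r$, $Q=(J_1-1)-r$. With these, Proposition \ref{propsumB} reads $\sum_{j=J_1}^{J_L}B^{sj}=c(a-b)$, $\sum_{j=J_1}^{J_L}B^{sj}j\log B=c\log B\,(aP-bQ)$, and, using $B^s/(B^s-1)^2=cr$, $\sum_{j=J_1}^{J_L}B^{sj}j^2\log^2 B=c\log^2 B\,\bigl(aP^2-bQ^2+cr(a-b)\bigr)$.

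The second step is the expansion. Inserting these into $Z_{J_L;J_1}(s)=\bigl(\sum B^{sj}\bigr)\bigl(\sum B^{sj}j^2\log^2 B\bigr)-\bigl(\sum B^{sj}j\log B\bigr)^2$ factors out $c^2\log^2 B$ and leaves
\[
(a-b)\bigl(aP^2-bQ^2+cr(a-b)\bigr)-(aP-bQ)^2 .
\]
Multiplying out both products, the terms $a^2P^2$ and $b^2Q^2$ cancel, while the mixed terms combine to $-ab(P^2-2PQ+Q^2)=-ab(P-Q)^2$; since $P-Q=J_L-(J_1-1)$ and $ab=B^{s(J_L+J_1-1)}$, this contributes $-B^{s(J_L+J_1-1)}\bigl(J_L-(J_1-1)\bigr)^2$. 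The surviving piece $cr(a-b)^2$ gives, on restoring $cr=B^s/(B^s-1)^2$, exactly the first term of the bracket in (\ref{ZJLJ1}); together with the prefactor $c^2\log^2 B=\bigl(B^s\log B/(B^s-1)\bigr)^2$ this is the claimed identity.

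For the limit (\ref{ZJLlim}) I would specialize to $J_1=1$, so $B^{s(J_1-1)}=1$ and $P-Q=J_L$, and divide through by $B^{2sJ_L}$: the bracket becomes $\frac{B^s}{(B^s-1)^2}(1-B^{-sJ_L})^2-B^{-sJ_L}J_L^2$. As $J_L\to\infty$ the first term tends to $B^s/(B^s-1)^2$ and $B^{-sJ_L}J_L^2\to 0$ because exponential decay dominates the polynomial growth; multiplying by $\bigl(B^s\log B/(B^s-1)\bigr)^2$ yields $\log^2 B\cdot B^{3s}/(B^s-1)^4$.

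I do not expect a genuine obstacle: the one place errors creep in is the sign bookkeeping when Proposition \ref{propsumB} is obtained by differentiating the geometric sum, and then the careful expansion above. The structural reason the formula collapses so cleanly is that $Z_{J_L;J_1}(s)$ is of the shape $\bigl(\sum w_j\bigr)\bigl(\sum w_j x_j^2\bigr)-\bigl(\sum w_j x_j\bigr)^2$ with weights $w_j=B^{sj}$ and nodes $x_j=j\log B$, i.e. a Cauchy--Schwarz (variance) defect, which is why the purely "quadratic in $J_L$'' contributions must cancel and only the geometric-series remainder and the range-length term persist.
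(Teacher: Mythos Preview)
Your proof is correct and follows essentially the same route as the paper: both substitute the closed forms (\ref{Jint0}), (\ref{Jint1}), (\ref{Jint2b}) from Proposition~\ref{propsumB} into $Z_{J_L;J_1}(s)$ and simplify, then specialize to $J_1=1$ for the limit. Your abbreviations $a,b,c,r,P,Q$ streamline the algebra considerably compared with the paper's fully expanded computation, and the closing remark on the Cauchy--Schwarz (weighted variance) structure is a nice conceptual addition that explains \emph{why} the cancellation must occur.
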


\begin{proof}
Recalling (\ref{Jint0}), (\ref{Jint1}) and (\ref{Jint2b}), we have:%
\begin{equation*}
\left( \sum_{j=1}^{J_{L}}B^{sj}\right) \left(
\sum_{j=1}^{J_{L}}B^{sj}j^{2}\log ^{2}B\right)
\end{equation*}%
\begin{eqnarray*}
&=&\left( \frac{B^{s}\log B}{B^{s}-1}\right) ^{2}\left(
B^{sJ_{L}}-B^{s\left( J_{1}-1\right) }\right) \times \left( \left(
B^{sJ_{L}}\left( \left( J_{L}-\frac{1}{B^{s}-1}\right) ^{2}+\frac{B^{s}}{%
\left( B^{s}-1\right) ^{2}}\right) \right) \right. \\
&&-\left. \left( B^{s\left( J_{1}-1\right) }\left( \left( \left(
J_{1}-1\right) -\frac{1}{B^{s}-1}\right) ^{2}+\frac{B^{s}}{\left(
B^{s}-1\right) ^{2}}\right) \right) \right) \\
&=&\left( \frac{B^{s}\log B}{B^{s}-1}\right) ^{2}\times \text{ }\left[
B^{2sJ_{L}}\left( \left( J_{L}-\frac{1}{B^{s}-1}\right) ^{2}+\frac{B^{s}}{%
\left( B^{s}-1\right) ^{2}}\right) \right. \\
&&+B^{2s\left( J_{1}-1\right) }\left( \left( \left( J_{1}-1\right) -\frac{1}{%
B^{s}-1}\right) ^{2}+\frac{B^{s}}{\left( B^{s}-1\right) ^{2}}\right) \\
&&-\left. B^{s\left( J_{L}+J_{1}-1\right) }\left( \left( J_{L}-\frac{1}{%
B^{s}-1}\right) ^{2}+\left( \left( J_{1}-1\right) -\frac{1}{B^{s}-1}\right)
^{2}+\frac{2B^{s}}{\left( B^{s}-1\right) ^{2}}\right) \right] \text{ ;}
\end{eqnarray*}%
while, on the other hand:%
\begin{equation*}
\left( \sum_{j=1}^{J_{L}}B^{sj}j\log B\right) ^{2}
\end{equation*}%
\begin{eqnarray*}
&=&\left( \frac{B^{s}\log B}{B^{s}-1}\right) ^{2}\left[ B^{2sJ_{L}}\left(
J_{L}-\frac{1}{B^{s}-1}\right) ^{2}+B^{2s\left( J_{1}-1\right) }\left(
\left( J_{1}-1\right) -\frac{1}{B^{s}-1}\right) ^{2}\right. \\
&&-\left. 2B^{s\left( J_{L}+J_{1}-1\right) }\left( J_{L}-\frac{1}{B^{s}-1}%
\right) \left( \left( J_{1}-1\right) -\frac{1}{B^{s}-1}\right) \right] \text{
,}
\end{eqnarray*}

so that:%
\begin{equation*}
Z_{J_{L};J_{1}}\left( s\right) =\left( \frac{B^{s}\log B}{B^{s}-1}\right)
^{2}\left[ \frac{B^{s}}{\left( B^{s}-1\right) ^{2}}\left(
B^{sJ_{L}}-B^{sJ_{1}}\right) ^{2}-B^{s\left( J_{L}+J_{1}\right) }\left(
J_{L}-J_{1}\right) ^{2}\right] \text{ .}
\end{equation*}

Clearly if $J_{1}=1$%
\begin{equation*}
Z_{J_{L}}\left( s\right) =B^{2sJ_{L}}\log ^{2}B\frac{B^{3s}}{(B^{s}-1)^{4}}%
+o\left( B^{2sJ_{L}}\right) \text{, }
\end{equation*}%
as claimed.
\end{proof}

\begin{lemma}
\label{cumulants}Let $A_{j}$ and $B_{j}$ be defined as in (\ref{Aj_cum}) and
(\ref{Bj_cum}). As $J_{L}\rightarrow \infty $%
\begin{equation*}
\frac{1}{B^{4J_{L}}}cum\left\{
\sum_{l_{1}}(A_{j_{1}}+B_{j_{1}}),\sum_{l_{2}}(A_{j_{2}}+B_{j_{2}}),%
\sum_{l_{3}}(A_{j_{3}}+B_{j_{3}}),\sum_{l_{4}}(A_{j_{4}}+B_{j_{4}})\right\}
\end{equation*}%
\begin{equation*}
=O_{J_{L}}\left( \frac{J_{L}^{4}\log ^{4}B}{B^{2J_{L}}}\right) \text{ .}
\end{equation*}
\end{lemma}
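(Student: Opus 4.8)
The plan is to reduce both $A_{j}$ and $B_{j}$ to \emph{diagonal} quadratic forms in the independent Gaussian coefficients $\{a_{lm}\}$, so that the diagram formula for joint cumulants of second Wiener chaos random variables collapses to a single sum over $l$, and then to keep track of the powers of $B$. First I would use $\sum_{k}\beta_{jk}^{2}=\sum_{l}b^{2}(l/B^{j})\sum_{m=-l}^{l}|a_{lm}|^{2}$ to write
\[
A_{j}=B^{2j}\log B^{j}\,\bigl(\widehat{S}_{j}-1\bigr),\qquad \widehat{S}_{j}:=\frac{\sum_{l}b^{2}(l/B^{j})(2l+1)\widehat{C}_{l}}{N_{j}G_{0}K_{j}(\alpha_{0})},
\]
and to note that $\widehat{G}(\alpha_{0})/G_{0}=(\sum_{j'}N_{j'})^{-1}\sum_{j'}N_{j'}\widehat{S}_{j'}$, so that $B_{j}=B^{2j}\log B^{j}\,(\widehat{G}(\alpha_{0})/G_{0}-1)$ is again a linear combination of the same variables $(2l+1)\widehat{C}_{l}=\sum_{m}|a_{lm}|^{2}\sim C_{l}\chi^{2}_{2l+1}$, which are jointly independent across $l$. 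A joint cumulant of four such diagonal forms $\sum_{l}w_{i}(l)(2l+1)\widehat{C}_{l}$ then equals $48\sum_{l}w_{1}(l)w_{2}(l)w_{3}(l)w_{4}(l)\,C_{l}^{4}(2l+1)$ --- this is where the diagram combinatorics disappears, only the scalar identity $\mathrm{cum}_{4}(\chi^{2}_{\nu})=48\nu$ being needed --- and the compact support of $b$ in $[B^{-1},B]$ forces all four $A$-type weights to share a common support, hence the corresponding resolution levels to lie within distance one of each other.

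Next I would expand, by multilinearity of cumulants, the fourth cumulant of $\sum_{j}(A_{j}+B_{j})$ into the $2^{4}$ mixed cumulants obtained by selecting $A$ or $B$ in each slot, and each of these into a sum over quadruples of levels. For the pure-$A$ term only quadruples with $\max_{a,b}|j_{a}-j_{b}|\le 1$ survive (the off-diagonal contributions $|j_{a}-j_{b}|=1$ entering exactly as the $\tau_{\pm}$ terms of Lemma \ref{gavarini}), so the quadruple sum reduces, up to a bounded multiplicity, to a single sum over $j$; for slots occupied by $B_{j}$ one uses $\sum_{j}B_{j}=\bigl(\sum_{j}B^{2j}\log B^{j}\bigr)(\widehat{G}(\alpha_{0})/G_{0}-1)$, so that such contributions factor through $\mathrm{cum}_{4}(\widehat{G}(\alpha_{0})/G_{0})=(\sum_{j'}N_{j'})^{-4}\,\mathrm{cum}_{4}\bigl(\sum_{j'}N_{j'}\widehat{S}_{j'}\bigr)$, which again collapses to a single $j'$-sum plus adjacent corrections.

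The remaining step is arithmetic. From Proposition \ref{propKj}, $N_{j}G_{0}K_{j}(\alpha_{0})\approx G_{0}I_{0}(B)B^{(2-\alpha_{0})j}$, and from Condition \ref{REGULNEED0}, $C_{l}\approx G_{0}l^{-\alpha_{0}}$, so
\[
\mathrm{cum}_{4}(\widehat{S}_{j})=\frac{48\sum_{l}b^{8}(l/B^{j})(2l+1)C_{l}^{4}}{\bigl(N_{j}G_{0}K_{j}(\alpha_{0})\bigr)^{4}}=O\!\left(\frac{B^{(2-4\alpha_{0})j}}{B^{(8-4\alpha_{0})j}}\right)=O(B^{-6j}),
\]
whence $\mathrm{cum}_{4}(A_{j})=B^{8j}(\log B^{j})^{4}\,\mathrm{cum}_{4}(\widehat{S}_{j})=O(B^{2j}j^{4}\log^{4}B)$, using that the fourth cumulant is translation-invariant; summing the geometric series over $j=1,\dots,J_{L}$ (Proposition \ref{propsumB}), the dominant term $j=J_{L}$ gives $O(B^{2J_{L}}J_{L}^{4}\log^{4}B)$. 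Likewise $\mathrm{cum}_{4}\bigl(\sum_{j'}N_{j'}\widehat{S}_{j'}\bigr)=\sum_{j'}N_{j'}^{4}O(B^{-6j'})+(\text{adjacent})=O(B^{2J_{L}})$, so $\mathrm{cum}_{4}(\widehat{G}(\alpha_{0})/G_{0})=O(B^{-6J_{L}})$ and $\mathrm{cum}_{4}\bigl(\sum_{j}B_{j}\bigr)=\bigl(\sum_{j}B^{2j}\log B^{j}\bigr)^{4}O(B^{-6J_{L}})=O(B^{2J_{L}}J_{L}^{4}\log^{4}B)$; the mixed cumulants obey the same bound, either by the Cauchy--Schwarz inequality for cumulants of second chaos variables (see \cite{nourdinpeccati}) or, more directly, because they too collapse to a single $l$-sum estimated in the identical way. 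Dividing by $B^{4J_{L}}$ yields the claimed $O_{J_{L}}(J_{L}^{4}\log^{4}B/B^{2J_{L}})$.

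I expect the main obstacle to be the bookkeeping around the $B_{j}$ contributions and the $A_{j}$--$B_{j}$ cross-cumulants: since $\widehat{G}(\alpha_{0})$ couples all resolution levels through weights of size $N_{j'}^{-1}$, one must check that the large prefactors $B^{2j}$ (respectively $\sum_{j}B^{2j}\log B^{j}$) do not resonate with these spread-out weights to produce a term larger than $B^{2J_{L}}J_{L}^{4}\log^{4}B$, and that the adjacent-level ($|j-j'|=1$) cross terms --- present throughout exactly as in Lemma \ref{gavarini} --- remain of the same order. Once the reduction to diagonal forms is in place the diagram combinatorics disappears, so beyond this point the argument is routine geometric-series estimation.
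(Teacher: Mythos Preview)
Your proposal is correct and follows essentially the same route as the paper: both reduce $A_{j}$ and $B_{j}$ to diagonal quadratic forms in the independent $\widehat{C}_{l}$ via $\sum_{k}\beta_{jk}^{2}=\sum_{l}b^{2}(l/B^{j})(2l+1)\widehat{C}_{l}$, use $\mathrm{cum}_{4}(\widehat{C}_{l})=O(l^{-3-4\alpha_{0}})$ to obtain the basic estimate $\mathrm{cum}_{4}(\widehat{S}_{j})=O(B^{-6j})$, and then split into the $2^{4}$ cases (pure $A$, pure $B$, mixed) by multilinearity. You are in fact slightly more careful than the paper on one point: the support condition forces $|j_{a}-j_{b}|\le 1$ rather than strict equality (the paper writes a Kronecker delta $\prod_{i}\delta_{j}^{j_{i}}$, which should be read loosely), and you correctly flag the adjacent-level cross terms as harmless corrections of the same order, exactly parallel to the $\tau_{\pm}$ terms in Lemma~\ref{gavarini}.
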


\begin{proof}
It is readily checked (see also \cite{dlm}) that%
\begin{equation*}
cum\left\{ \widehat{C}_{l},\widehat{C}_{l},\widehat{C}_{l},\widehat{C}%
_{l}\right\} =O\left( l^{-3}l^{-4\alpha _{0}}\right) \text{ .}
\end{equation*}%
Let us compute:%
\begin{equation*}
C_{j_{1},j_{2},j_{3}j_{4}}^{4}=cum\left( \frac{\sum_{k}\beta _{j_{1}k}^{2}}{%
N_{j_{1}}G_{0}K_{j_{1}}\left( \alpha _{0}\right) },\frac{\sum_{k}\beta
_{j_{2}k}^{2}}{N_{j_{2}}G_{0}K_{j_{2}}\left( \alpha _{0}\right) },\frac{%
\sum_{k}\beta _{j_{3}k}^{2}}{N_{j_{3}}G_{0}K_{j_{3}}\left( \alpha
_{0}\right) },\frac{\sum_{k}\beta _{j_{4}k}^{2}}{N_{j_{4}}G_{0}K_{j_{4}}%
\left( \alpha _{0}\right) }\right)
\end{equation*}%
\begin{equation*}
=\left( \prod_{i=1}^{4}\frac{1}{N_{j_{i}}G_{0}K_{j_{i}}\left( \alpha
_{0}\right) }\right) cum\left( \sum_{k}\beta _{j_{1}k}^{2},\sum_{k}\beta
_{j_{2}k}^{2},\sum_{k}\beta _{j_{3}k}^{2},\sum_{k}\beta _{j_{4}k}^{2}\right)
\end{equation*}%
\begin{equation*}
=\sum_{l_{1},l_{2},l_{3},l_{4}}\left( \prod_{i=1}^{4}\frac{b^{2}\left( \frac{%
l_{i}}{B^{j_{i}}}\right) \left( \frac{2l_{i}+1}{4\pi }\right) }{%
N_{j_{i}}G_{0}K_{j_{i}}\left( \alpha _{0}\right) }\right) cum\left( \widehat{%
C}_{l_{1}},\widehat{C}_{l_{2}},\widehat{C}_{l_{3}},\widehat{C}_{l_{4}}\right)
\end{equation*}%
\begin{equation*}
=\sum_{l}\left( \frac{2l+1}{4\pi }\right) ^{4}\left( \prod_{i=1}^{4}\frac{%
b^{2}\left( \frac{l}{B^{j_{i}}}\right) }{N_{j_{i}}G_{0}K_{j_{i}}\left(
\alpha _{0}\right) }\right) cum\left( \widehat{C}_{l},\widehat{C}_{l},%
\widehat{C}_{l},\widehat{C}_{l}\right) +o\left( B^{-4j}\right)
\end{equation*}%
\begin{equation*}
=O\left( \sum_{l}\left( \prod_{i=1}^{4}B^{\left( \alpha _{0-}2\right)
j_{i}}b^{2}\left( \frac{l}{B^{j_{i}}}\right) \right) B^{\left( 2-4\alpha
_{0}\right) j}\left( l^{1-4\alpha _{0}}\right) \right)
\end{equation*}%
\begin{equation*}
=O\left( B^{-6j}\prod_{i=1}^{4}\delta _{j}^{j_{i}}\right) \text{ .}
\end{equation*}%
Then we have%
\begin{eqnarray*}
&&cum\left\{ \frac{\widehat{G}_{J_{L}}(\alpha _{0})}{G_{0}},\frac{\widehat{G}%
_{J_{L}}(\alpha _{0})}{G_{0}},\frac{\widehat{G}_{J_{L}}(\alpha _{0})}{G_{0}},%
\frac{\widehat{G}_{J_{L}}(\alpha _{0})}{G_{0}}\right\} \\
&=&O\left( \frac{1}{B^{8J_{L}}}%
\sum_{j_{1}j_{2}j_{3}j_{4}}N_{j_{1}}N_{j_{2}}N_{j_{3}}N_{j_{4}}C_{j_{1},j_{2},j_{3}j_{4}}^{4}\right)
\\
&=&O\left( \frac{1}{B^{8J_{L}}}\sum_{j}B^{2j}\right) =O\left(
B^{-6J_{L}}\right) \text{ .}
\end{eqnarray*}%
As in \cite{dlm}, the proof can be divided into 5 cases, corresponding
respectively to
\begin{equation*}
\frac{1}{B^{4J_{L}}}cum\left\{
\sum_{j_{1}}A_{j_{1}},\sum_{j_{2}}A_{j_{2}},\sum_{j_{3}}A_{j_{3}},%
\sum_{j_{4}}A_{j_{4}}\right\} ,\frac{1}{B^{4J_{L}}}cum\left\{
\sum_{j_{1}}B_{j_{1}},\sum_{j_{2}}B_{j_{2}},\sum_{j_{3}}B_{j_{3}},%
\sum_{j_{4}}B_{j_{4}}\right\}
\end{equation*}%
\begin{equation*}
\frac{1}{B^{4J_{L}}}cum\left\{
\sum_{j_{1}}A_{j_{1}},\sum_{j_{2}}B_{j_{2}},\sum_{j_{3}}B_{j_{3}},%
\sum_{j_{4}}B_{j_{4}}\right\} ,\frac{1}{B^{4J_{L}}}cum\left\{
\sum_{j_{1}}A_{j_{1}},\sum_{j_{2}}A_{j_{2}},\sum_{j_{3}}B_{j_{3}},%
\sum_{j_{4}}B_{j_{4}}\right\}
\end{equation*}%
and%
\begin{equation*}
\frac{1}{B^{4J_{L}}}cum\left\{
\sum_{j_{1}}A_{j_{1}},\sum_{j_{2}}A_{j_{2}},\sum_{j_{3}}A_{j_{3}},%
\sum_{j_{4}}B_{j_{4}}\right\} \text{ ,}
\end{equation*}%
where we have used \ref{Aj_cum}, \ref{Bj_cum}. We have for instance%
\begin{eqnarray*}
&&\frac{1}{B^{4J_{L}}}cum\left\{
\sum_{j_{1}}A_{j_{1}},\sum_{j_{2}}A_{j_{2}},\sum_{j_{3}}A_{j_{3}},%
\sum_{j_{4}}A_{j_{4}}\right\} \\
&=&O\left( \frac{1}{B^{4J_{L}}}\sum_{j_{1},j_{2}j_{3},j_{4}}\prod_{i=1}^{4}%
\left( B^{2j_{i}}\log B^{j_{i}}\right) C_{j_{1},j_{2},j_{3}j_{4}}^{4}\right)
\\
&=&O\left( \frac{1}{B^{4J_{L}}}\sum_{j}B^{8j}\log ^{4}B^{j}B^{-6j}\right) =O(%
\frac{1}{B^{4J_{L}}}\sum_{j}\log ^{4}B^{j}B^{2j})=O(\frac{\log ^{4}B^{J_{L}}%
}{B^{2J_{L}}})\text{ ;}
\end{eqnarray*}%
and%
\begin{eqnarray*}
&&\frac{1}{B^{4J_{L}}}cum\left\{
\sum_{j_{1}}B_{j_{1}},\sum_{j_{2}}B_{j_{2}},\sum_{j_{3}}B_{j_{3}},%
\sum_{j_{4}}B_{j_{4}}\right\} \\
&=&\frac{1}{B^{4J_{L}}}\left\{ \sum_{j}B^{2j}\log B^{j}\right\}
^{4}cum\left\{ \frac{\widehat{G}_{J_{L}}(\alpha _{0})}{G_{0}},\frac{\widehat{%
G}_{J_{L}}(\alpha _{0})}{G_{0}},\frac{\widehat{G}_{J_{L}}(\alpha _{0})}{G_{0}%
},\frac{\widehat{G}_{J_{L}}(\alpha _{0})}{G_{0}}\right\} \\
&=&O\left( \log ^{4}B^{J_{L}}B^{-2J_{L}}\right) \text{ ;}
\end{eqnarray*}%
The proof for the remaining terms is entirely analogous, and hence
omitted.\bigskip
\end{proof}

\end{document}